\newcommand{\urltilde}{\kern -.15em\lower .7ex\hbox{~}\kern .04em}
\newtheorem{theorem}{Theorem}[section]
\newtheorem{lemma}[theorem]{Lemma}
\newtheorem{fact}[theorem]{Fact}
\newtheorem*{claim*}{Claim}
\newtheorem*{theorem*}{Theorem}
\newtheorem{claim}{Claim}
\newtheorem*{subclaim*}{Subclaim}
\newtheorem{cor}[theorem]{Corollary}
\newcommand{\mrm}{\mathrm}
\newcommand{\mbb}{\mathbb}
\newcommand{\mcal}{\mathcal}
\newcommand{\rst}{\!\upharpoonright\!}
\newcommand{\sref}{\mathrm{SR}}
\newcommand{\ssref}{\mathrm{SSR}}
\newcommand{\MAC}{\rm MA_{\omega_1}(Cohen)}
\newcommand{\MM}{{\rm MM}}
\newcommand{\PFA}{{\rm PFA}}
\newcommand{\SPFA}{\rm SPFA}
\newcommand{\SR}{\rm SR}
\newcommand{\SSR}{\rm SSR}
\newcommand{\TP}{\rm TP}
\newcommand{\ITP}{\rm ITP}
\newcommand{\Fn}{\rm Fn}
\newcommand{\lev}{\rm lev}
\newcommand{\F}{\mathscr{F}}
\newcommand{\G}{\mathscr{G}}
\newcommand{\B}{\mathscr{B}}
\newcommand{\Dcal}{\mathscr{D}}
\newcommand{\Ecal}{\mathscr{E}}
\newcommand\Pcal{\mathscr{P}}
\newcommand\Qcal{\mathscr{Q}}
\newcommand{\defarrow}{\stackrel{\mathrm{def}}{\Leftrightarrow}}
\newcommand{\otype}{\mathop{\mathrm{otp}} \nolimits}
\newcommand{\cof}{\mathop{\mathrm{cof}} \nolimits}
\newcommand{\ltpt}{\mathop{\mathrm{lim}} \nolimits}
\newcommand{\bsup}{{\sup } ^+}
\newcommand{\skull}{\mathop{\mathrm{Hull}} \nolimits}
\newcommand{\clsr}{\mathop{\mathrm{cl}} \nolimits}
\newcommand{\dom}{\mathop{\mathrm{dom}} \nolimits}
\newenvironment{renumerate}%
{\begin{enumerate}}{\end{enumerate}}
{\begin{enumerate}}{\end{enumerate}}
\newenvironment{aenumerate}%
{\begin{enumerate}}{\end{enumerate}}
\begin{document}

\title[Stationary reflection principles]{Stationary reflection principles and two cardinal
tree properties}
\author{Hiroshi Sakai}
\email{hsakai@people.kobe-u.ac.jp}
\address{Department of Computer Science and Systems Engineering
Kobe University, 1-1 Rokkodai, Nada, Kobe 657-8501, Japan}
\urladdr{http://kurt.scitec.kobe-u.ac.jp/\urltilde hsakai}
\author[Boban Veli\v{c}kovi\'{c}]{Boban Veli\v{c}kovi\'{c}}
\email{boban@math.univ-paris-diderot.fr}
\address{Institut de Mathematiques de Jussieu - Paris Rive Gauche,
Universit\'e Paris Diderot,
75205 Paris Cedex 13,
France}
\urladdr{http://www.logique.jussieu.fr/\urltilde boban}
\thanks{This work is supported by the Exchange Grant 2744 from the INFTY Research Networking Programme
(E.S.F.) and by a binational grant PHC Sakura 27604XE.
The first author was supported by Grant-in-Aid for Young Scientists (B) No. 23740076 of the Ministry of Education,
Culture, Sports, Science and Technology Japan (MEXT)}

\keywords{stationary sets, semi stationary sets, reflection, singular cardinal hypothesis, tree property, supercompact
cardinals, strongly compact cardinals}

\subjclass[2000]{Primary: 03E04, 03E35, 03E50, 03E55; Secondary:
03E05, 03E65}

\begin{abstract}{We study consequences of stationary and semi-stationary set reflection.
We show that the semi-stationary reflection principle implies the Singular Cardinal Hypothesis,
the failure of weak square principle, etc. We also consider two cardinal tree properties
introduced recently by Weiss and prove that they follow from stationary and semi-stationary
set reflection augmented with a weak form of Martin's Axiom. We also show that there
are some differences between the two reflection principles which suggest that stationary
set reflection is analogous to supercompactness whereas semi-stationary set reflection
is analogous to strong compactness. }
\end{abstract}
\maketitle


\section*{Introduction}

Reflection principles are a way of transferring large cardinal properties to small cardinals.
Over the years a large number of such principles have been considered and a rich theory
has been developed with numerous applications not only to pure set theory but also to various
other areas of mathematics. Some of the earliest and most important
reflection principles concern reflection of various classes of stationary sets.
In this paper we will consider stationary reflection principle $\sref$, introduced by Foreman,
Magidor and Shelah \cite{FMS}, which asserts that, for every $\lambda \geq \omega_2$,
the following statement $\sref (\lambda)$ holds:
\medskip
\begin{quote}{\em
If $S$ is a stationary subset of $[\lambda]^{\omega}$ then there is
$I\subseteq \lambda$ of cardinality $\omega_1$ such that $\omega_1\subseteq I$
and  $S\cap [I]^{\omega}$ is stationary in $[I]^{\omega}$.}
\end{quote}
\medskip
\noindent $\sref$ and its variations have been studied extensively by a number of authors
and it has been shown that it has important consequences in cardinal arithmetic,
infinite combinatorics, topology, algebra, etc. One of the key observations of
\cite{FMS} is that $\sref$ implies the following principle $( \dagger )$:

\medskip
\begin{quote}{\em
Every posets preserving stationary subsets of $\omega_1$ is semiproper.}
\end{quote}
\medskip
\noindent
This allowed Foreman, Magidor and Shelah \cite{FMS} to show that in the standard
model for the Semi Proper Forcing Axiom ($\SPFA$) a provably maximal forcing axiom,
Martin's Maximum ($\MM$), holds. Somewhat later Shelah \cite{Shelah_SPFA} showed
that $\MM$ follows outright from $\SPFA$. The principle $( \dagger )$ in itself has many
important consequences, for instance, already in \cite{FMS} it was shown
that it implies that the nonstationary ideal  ${\rm NS}_{\omega_1}$ is precipitous,
and that Chang's Conjecture holds. It is therefore interesting in its own right.
In \cite[Chapter XIII, 1.7]{Sh_P} Shelah showed that $( \dagger )$ is equivalent to a certain reflection
principle. In order to explain this we will introduce some notation.

For countable sets $x$ and $y$, we say that $y$ is an \emph{$\omega_1$-extension} of $x$ if
$x \subseteq y$ and $x \cap \omega_1 = y \cap \omega_1$.
We will write $x \sqsubseteq y$ to say that $y$ is an $\omega_1$-extension of $x$.
Given $S\subseteq [\lambda]^{\omega}$, for some $\lambda \geq \omega_1$,
we will say that $S$ is {\em full} if $S$ is closed under $\omega_1$-extensions.
Shelah \cite{Sh_P} showed that $( \dagger )$ is equivalent to the statement $\ssref$ which
says that, for every $\lambda \geq \omega_2$, the following statement $\ssref (\lambda)$ holds:

\medskip
\begin{quote}{\em
If $S$ is a full stationary subset of $[\lambda]^{\omega}$  then there is
$I\subseteq \lambda$ of cardinality $\omega_1$ such that $\omega_1\subseteq I$
and  $S\cap [I]^{\omega}$ is stationary in $[I]^{\omega}$.}
\end{quote}
\medskip
One may be tempted to conjecture that the assumption that $S$ is full in the above
statement is innocuous and that $\ssref$ is equivalent to $\sref$.
However the first author \cite{Sa} showed that this is not the case, indeed
$\ssref$ is strictly weaker than $\sref$. One of the goals of the present
paper is to show that $\ssref$ nevertheless has many of the consequences
as $\sref$; it implies the Singular Cardinal Hypothesis, the failure of
a weak version of the square principle, etc.

Another topic of this paper has to do with two cardinal properties recently
introduced and studied by Weiss \cite{We}. We first recall the relevant
definitions. Suppose $\kappa$ is a regular cardinal and $\lambda \geq \kappa$.
By $\Fn (\kappa,\lambda,2)$ we denote the set of all partial functions of size
$<\kappa$ from $\lambda$ to $\{ 0,1\}$. A {\em $(\kappa,\lambda)$-tree} is
a family $\F \subseteq \Fn (\kappa,\lambda, 2)$ which is closed under
restrictions and such that for every $u\in [\lambda]^{<\kappa}$ there is $f\in \F$
with $\dom (f)=u$. We denote by ${\lev}_{u}(\F)$ the $u$-level of $\F$,
i.e. the set $\{ f\in \F : \dom (f) =u\}$. A {\em cofinal branch} through
$\F$ is a function $b:\lambda \rightarrow \{ 0,1\}$ such that
$f\restriction u \in \F$, for every $u\in [\lambda]^{<\kappa}$.
A $(\kappa,\lambda)$-tree is called {\em thin} if $\lev_u(\F)$
is of size $<\kappa$, for all $u\in [\lambda]^{<\kappa}$.
A {\em level sequence} of $\F$ is a sequence $\vec{f}=( f_u: u\in [ \lambda ]^{< \kappa} )$
such that $f_u\in {\lev}_u(\F)$ for all $u \in [ \lambda ]^{< \kappa}$.
Given a $(\kappa,\lambda)$-tree and a level sequence $\vec{f}$ of $\F$
we will say that a branch $b$ of $\F$ is {\em ineffable} for $\vec{f}$
if the set $\{ u \in [\lambda]^{<\kappa}: b\restriction u =f_u\}$ is stationary
in $[\lambda]^{ <\kappa}$.  Given a regular cardinal $\kappa \geq \omega_1$ and
$\lambda \geq \kappa$ the two cardinal tree property $\TP (\kappa,\lambda)$ states
that every thin $(\kappa,\lambda)$-tree has a cofinal branch.
We say that $\kappa$ has the strong tree property if $\mrm{TP} ( \kappa , \lambda )$ holds
for every $\lambda \geq \kappa$.
Given $\kappa$ and $\lambda$ as before we let $\ITP (\kappa,\lambda)$ denote the
statement that for every thin  $(\kappa,\lambda)$-tree and a level sequence $\vec{f}$
of $\F$ there is an ineffable branch for $\vec{f}$.
We say that $\kappa$ has the super tree property if $\mrm{ITP} ( \kappa , \lambda )$
holds for every $\lambda \geq \kappa$.
Note that if $\kappa$ is inaccessible then every $(\kappa,\lambda)$-tree is thin.
With this in mind we can now reinterpret classical results of Jech \cite{Jec}
and Magidor \cite{Mag}. Namely, in our terminology, Jech \cite{Jec}
showed that an uncountable cardinal is strongly compact if and only if it is inaccessible
and has the strong tree property.
Similarly, Magidor \cite{Mag} showed that an uncountable cardinal $\kappa$
is supercompact if and only if it is inaccessible and has the super tree property.
These results are analogous to the classical
reformulation of weak compactness which states that an uncountable cardinal
$\kappa$ is weakly compact if and only if it is inaccessible and the usual tree property
holds for $\kappa$, see for instance \cite{Kanamori}.
Since all known proofs of the consistency of strong
forcing axioms require supercompact cardinals it was natural to expect
that they would imply these two cardinal properties for $\kappa =\omega_2$.
This was indeed confirmed by Weiss \cite{We} who showed that the Proper
Forcing Axiom ($\PFA$) implies that $\omega_2$ has the super tree property.
Moreover, Viale and Weiss \cite{VW} showed that if
the universe $V$ is obtained by forcing over some inner model $M$ by a forcing notion which has
the $\kappa$-chain condition and the $\kappa$-approximation property,
then if $\kappa$ has the strong tree property in $V$, it also has the strong tree property in $M$.
If, moreover, the forcing notion is proper then the same holds
for the super tree property.
Since all known methods for producing $\PFA$ start from
an inaccessible cardinal $\kappa$ in some universe $M$ and produce a generic
extension by a forcing notion which has the above property and in which
$\kappa$ becomes $\omega_2$, it follows that they require at least a strongly
compact cardinal. We will show that the $\SR$ together with $\MAC$
implies the super tree property of $\omega_2$
and that $\SSR$ together with $\MAC$ implies the strong tree property of $\omega_2$.
This suggests that $\SR + \MAC$ should
have the consistency strength of a supercompact cardinal whereas $\SSR +\MAC$
should have the strength of a strongly compact cardinal.
We also show that $\SSR + \MAC$ does not imply the super tree property of $\omega_2$.

This paper is organized as follows.
In Section \ref{sec:preliminaries} we present notation and basic facts used in this paper.
In Section \ref{sec:wsquare} we prove that $\ssref$ implies the failure of weak square principles.
In Section \ref{sec:itp_tp} we prove that $\sref + \MAC$ and $\ssref + \MAC$
imply the super and the strong tree properties, respectively.
Finally in Section \ref{sec:sch} we prove that $\ssref$ implies $\mrm{SCH}$.


\section{Preliminaries} \label{sec:preliminaries}

In this section we present notation and basic facts used in this paper.
For a set $A$ of ordinals let $\ltpt (A)$ be the set of all limit points in $A$.
Moreover let
$\bsup (A) = \sup \{ \alpha + 1 : \alpha \in A \}$.
We often use $\bsup$ instead of $\sup$ since it slightly simplifies our arguments.
For an ordinal $\lambda$ and a regular cardinal $\kappa < \lambda$
let $E^\lambda_\kappa = \{ \alpha < \lambda : \cof ( \alpha ) = \kappa \}$.

Let $A$ be a set and $F$ be a function from $[A]^{< \omega}$ to $A$.
We say that $x \subseteq A$ is closed under $F$ if $F(a) \in x$, for all $a \in [x]^{< \omega}$.
For each $x \subseteq A$ let $\clsr_F (x)$ be the closure of $x$ under $F$,
i.e. the smallest subset $y$ of $A$ which contains $x$ and is closed under $F$.

Let $\kappa$ be a regular uncountable cardinal and $A$ be a set including $\kappa$.
Recall that a subset $C$ of $[A]^{< \kappa}$ is said to be \emph{club}
if and only if it is $\subseteq$-cofinal in $[A]^{< \kappa}$,
and closed under unions of  $\subseteq$-increasing sequence  of length $< \kappa$.
$S \subseteq [A]^{< \kappa}$ is said to be \emph{stationary}
if it intersects all club subsets of $[A]^{< \kappa}$.
We often use the well-known fact that $S$ is stationary in $[A]^{< \kappa}$ if and only if
for any function $F: [A]^{< \omega} \to A$ there exists $x \in S$ which is closed
under $F$ and such that $x \cap \kappa \in \kappa$.
We say that $X \subseteq [A]^\kappa$ is stationary (or club) if $X$ is stationary (or club)
in $[A]^{< \kappa^+}$.

For a set $A$ and a limit ordinal $\eta$ we say that $A$ is
\emph{internally approachable of length $\eta$} if there exists a $\subseteq$-increasing
sequence $( x_\xi : \xi < \eta )$
such that $\bigcup_{\xi < \eta} x_\xi = A$
and such that $( x_\xi : \xi < \zeta  ) \in A$, for all $\zeta  < \eta$.

Suppose that $\frak A=(A,\unlhd,\ldots)$ is a structure in a countable first order language
and $\unlhd$ is a well ordering of $A$. Then $\frak A$ has definable Skolem functions.
For each $X \subseteq A$ let $\skull^\mcal{\frak A} (X )$ be the Skolem hull
of $X$ in $\frak A$, i.e.~$\skull^\mcal{\frak A} (X)$ is the smallest $M$ elementary submodel
of  $\frak A$ such that $X \subseteq M$.
We say that a structure $\frak A=(A,\ldots)$ is an \emph{expansion} of a structure $\frak A'=(A,\ldots)$
if $\frak A$ is obtained by adding countable many constants, functions and predicates
to $\frak A'$. We use the following fact.

\begin{fact}[folklore] \label{fact:skull_sup}
Let $\theta$ be a regular uncountable cardinal and $\unlhd$ a well ordering of $H_\theta$.
Let  $\frak A = (H_\theta,\in, \unlhd, \ldots)$ be a structure in a countable language and
suppose $M$ is an elementary submodel of  $\frak A$ and $\lambda$ is a regular uncountable cardinal
with $\lambda \in M$. Let $\delta= \sup ( M \cap \lambda )$. Then
$\skull^{\frak A} ( M \cup \delta ) \cap \lambda = \delta$.
\end{fact}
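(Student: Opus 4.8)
The plan is to establish the two inclusions separately. The inclusion $\delta \subseteq \skull^{\frak A}(M \cup \delta) \cap \lambda$ is immediate, since $\delta \subseteq M \cup \delta \subseteq \skull^{\frak A}(M \cup \delta)$ and $\delta \le \lambda$. Moreover, if $\delta = \lambda$ then $\lambda \subseteq \skull^{\frak A}(M \cup \delta)$ and the reverse inclusion is trivial, so from now on I assume $\delta < \lambda$. Since $\unlhd$ well-orders $H_\theta$, the structure $\frak A$ has definable Skolem functions, and hence, writing $N = \skull^{\frak A}(M \cup \delta)$, every element of $N$ can be written as $t^{\frak A}(\vec a, \vec \xi)$ for some Skolem term $t$, some parameters $\vec a$ from $M$, and some finite tuple $\vec\xi$ of ordinals below $\delta$. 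The whole point is to bound such a value, when it happens to be an ordinal below $\lambda$, strictly below $\delta$.

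First I record that if $\gamma \in M \cap \lambda$ then, by elementarity, $\gamma + 1 \in M \cap \lambda$ as well, whence $\gamma < \gamma + 1 \le \delta$; thus every element of $M \cap \lambda$ is strictly below $\delta$. Now fix $\alpha \in N \cap \lambda$ together with a representation $\alpha = t^{\frak A}(\vec a, \vec\xi)$ as above, and let $m$ be the length of $\vec\xi$. Since each entry of $\vec\xi$ is below $\delta = \sup(M \cap \lambda)$ and $\vec\xi$ is finite, I can pick a single $\beta \in M \cap \lambda$ with $\xi_j < \beta$ for all $j$. The key move is to replace the external parameters $\vec\xi$ by the $M$-ordinal $\beta$ and take a supremum that $M$ itself can compute:
\[
\gamma \;=\; \sup\bigl\{\, t^{\frak A}(\vec a, \vec\eta) \;:\; \vec\eta \in \beta^m, \ t^{\frak A}(\vec a, \vec\eta) \in \lambda \,\bigr\}.
\]

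It remains to check that $\gamma \in M$ and $\gamma < \lambda$. For the former, the displayed expression defines $\gamma$ from the parameters $\vec a, \beta, \lambda$, all of which lie in $M$, together with the graph of the Skolem function $t$, which is definable in $\frak A$; hence $\gamma$ is the unique witness of a formula over $M$, so $\gamma \in M$ by elementarity. For the latter, the set whose supremum is taken is a subset of $\lambda$ indexed by $\beta^m$, which has cardinality at most $\max(|\beta|, \aleph_0) < \lambda$ since $\beta < \lambda$ and $\lambda$ is uncountable; as $\lambda$ is regular and this computation is correct in $\frak A$ (because $\lambda < \theta$), the supremum stays below $\lambda$. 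Thus $\gamma \in M \cap \lambda$, so $\gamma < \delta$ by the previous remark. Finally $\vec\xi \in \beta^m$ and $t^{\frak A}(\vec a, \vec\xi) = \alpha \in \lambda$, so $\alpha$ is one of the values over which $\gamma$ is the supremum; hence $\alpha \le \gamma < \delta$. As $\alpha$ was arbitrary, $N \cap \lambda \subseteq \delta$, which completes the proof.

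The only genuinely delicate step is the choice of $\gamma$: the argument hinges on compressing the finitely many external ordinal parameters $\vec\xi$ below a single ordinal $\beta \in M$, so that the relevant supremum becomes an object definable inside $M$ while the regularity of $\lambda$ keeps it below $\lambda$. Once this is set up, everything else is routine bookkeeping with definable Skolem functions and elementarity.
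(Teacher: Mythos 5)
Your proof is correct and follows essentially the same route as the paper's: you bound the finitely many parameters below $\delta$ by some $\beta \in M \cap \lambda$, replace them by all tuples below $\beta$ to get a set of candidate values definable in $M$, and use the regularity of $\lambda$ to conclude that its supremum lies in $M \cap \lambda$, hence below $\delta$. The only cosmetic difference is that you work with Skolem terms and suprema over $\beta^m$, whereas the paper uses a formula defining $\alpha$ uniquely and the partial least-witness function $h$ on $[\gamma]^{<\omega}$ with $\sup(\mathrm{range}(h)) \in M \cap \lambda$.
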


\begin{proof}
It suffices to prove that $\skull^\frak{A} ( M \cup \delta ) \cap \lambda \subseteq \delta$.
Take an arbitrary $\alpha \in \skull^\frak{A} ( M \cup \delta ) \cap \lambda$.
Then there are a formula $\varphi ( v_0 , v_1 , v_2 )$, $b \in [ \delta ]^{< \omega}$
and $p \in M$ such that $\alpha$ is the unique element with
$\frak{A} \models \varphi [ \alpha , b , p ]$.
Take $\gamma \in M \cap \lambda$ with $b \in [ \gamma ]^{< \omega}$,
and for each $a \in [ \gamma ]^{< \omega}$ let $h(a)$ be the least $\xi < \lambda$
with $\frak{A} \models \varphi [ \xi , a , p ]$ if such $\xi$ exists.
Then $h$ is a partial function from $[ \gamma ]^{< \omega}$ to $\lambda$,
and $h \in M$ by the definability of $h$ and the elementarily of $M$.
Then
\[
\alpha = h(b) < \sup ( \mrm{ran} (h) ) \in M \cap \lambda \; .
\]
Hence $\alpha \in \delta$.
\end{proof}

Next we give our notation and facts relevant to the singular cardinal combinatorics.
Recall that $\mrm{SCH}$ is the statement that
$\lambda^{\cof ( \lambda )} = \lambda^+$ for all singular cardinals $\lambda$
with $2^{\cof ( \lambda )} < \lambda$.
We say that $\mrm{SCH}$ fails at a singular cardinal $\lambda$
if $2^{\cof ( \lambda )} < \lambda$, and $\lambda^{\cof ( \lambda )} > \lambda^+$.
We use the following well-known theorem:

\begin{fact}[Silver \cite{Sil}] \label{fact:Silver}
Suppose $\lambda$ is the least singular cardinal
at which $\mrm{SCH}$ fails.
Then $\cof ( \lambda ) = \omega$.
\end{fact}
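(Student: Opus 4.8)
The plan is to reconstruct Silver's classical argument showing that $\mrm{SCH}$ cannot fail for the first time at a singular cardinal of uncountable cofinality. Assume toward a contradiction that $\kappa := \cof(\lambda)$ is uncountable. Since $\mrm{SCH}$ fails at $\lambda$ we have $2^{\kappa} < \lambda$ and $\lambda^{\kappa} > \lambda^{+}$, while the minimality of $\lambda$ gives that $\mrm{SCH}$ holds at every singular cardinal below $\lambda$. The goal is to derive $\lambda^{\kappa} = \lambda^{+}$, contradicting the failure of $\mrm{SCH}$ at $\lambda$.

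First I would establish, by induction on cardinals $\nu$ with $2^{\kappa} < \nu < \lambda$, that $\nu^{\kappa} \leq \nu^{+}$. The induction splits into the usual cases according to $\cof(\nu)$. If $\cof(\nu) > \kappa$ (in particular if $\nu$ is regular), then every function from $\kappa$ into $\nu$ is bounded, so $\nu^{\kappa} = \nu \cdot \sup_{\mu < \nu} |\mu|^{\kappa}$, which equals $\nu$ by the inductive hypothesis. If $\cof(\nu) \leq \kappa$, then $2^{\cof(\nu)} \leq 2^{\kappa} < \nu$, so $\mrm{SCH}$ applies at $\nu$; combining the standard formula $\nu^{\kappa} = \bigl( \sup_{\mu < \nu} |\mu|^{\kappa} \bigr)^{\cof(\nu)}$ with the inductive hypothesis (which makes the inner supremum equal to $\nu$) yields $\nu^{\kappa} = \nu^{\cof(\nu)} = \nu^{+}$. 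Only $\mrm{SCH}$ below $\lambda$ and the inductive hypothesis are invoked, so the argument is not circular.

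Next fix an increasing continuous sequence $\langle \lambda_{i} : i < \kappa \rangle$ of cardinals cofinal in $\lambda$ with $\lambda_{0} > 2^{\kappa}$. Since each $\lambda_{j}$ divides the product we have $\lambda \leq \prod_{i < \kappa} \lambda_{i}$, and as $|\kappa \times \kappa| = \kappa$ this gives $\lambda^{\kappa} \leq \bigl( \prod_{i < \kappa} \lambda_{i} \bigr)^{\kappa} = \prod_{i < \kappa} \lambda_{i}^{\kappa} \leq \prod_{i < \kappa} \lambda_{i}^{+}$, where the last inequality uses the bound from the previous paragraph. Since $\lambda^{\kappa} \geq \lambda^{+}$ holds trivially, it now suffices to prove $\prod_{i < \kappa} \lambda_{i}^{+} \leq \lambda^{+}$.

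This last inequality is the step I expect to be the \emph{main obstacle}, since all naive estimates of the product merely return $\lambda^{\kappa}$ again. Here I would invoke the Galvin--Hajnal rank analysis: order the functions in ${}^{\kappa}\mathrm{Ord}$ by declaring $f <^{*} g$ iff $\{ i < \kappa : f(i) \geq g(i) \}$ is nonstationary, observe that Fodor's lemma on the regular uncountable cardinal $\kappa$ makes this order well-founded, and use the resulting ordinal norm to bound $\prod_{i < \kappa} \lambda_{i}^{+}$ by $\lambda^{+}$. It is exactly at this point that the uncountability of $\kappa$ is indispensable: Fodor's lemma requires $\kappa$ to be regular and uncountable, so no such contradiction is available when $\cof(\lambda) = \omega$. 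This is precisely why $\cof(\lambda) = \omega$ remains the only possibility for the least singular cardinal at which $\mrm{SCH}$ fails.
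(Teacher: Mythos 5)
The paper does not prove this Fact at all: it is quoted as a black box with a citation to Silver, so the only question is whether your reconstruction is sound. Your first two steps are correct: the induction giving $\nu^{\kappa} \leq \nu^{+}$ for $2^{\kappa} < \nu < \lambda$ is the standard one, and the chain $\lambda^{\kappa} \leq \prod_{i<\kappa} \lambda_i^{\kappa} \leq \prod_{i<\kappa} \lambda_i^{+}$ is fine. But the final step, as you state it, has a genuine gap. First note that by your own Step 1 bounds, $\prod_{i<\kappa}\lambda_i^{+} \leq \lambda^{\kappa} \leq \prod_{i<\kappa}\lambda_i^{+}$, so $\prod_{i<\kappa}\lambda_i^{+} = \lambda^{\kappa}$; hence ``prove $\prod_{i<\kappa}\lambda_i^{+} \leq \lambda^{+}$'' is verbatim the conclusion you are after, not a reduction of it. More importantly, the Galvin--Hajnal norm machinery does not bound full products; it bounds \emph{almost disjoint transversal families}: if $T \subseteq \prod_{i<\kappa} A_i$ is such that any two distinct members agree only on a nonstationary set, and $|A_i| \leq \lambda_i^{+}$ for stationarily many $i$, then $|T| \leq \lambda^{+}$. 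The rank induction needs almost disjointness essentially: the counting works because a member $f \in T$ is determined by its restriction $f \restriction S$ to a stationary set $S$, which fails badly for the full product.

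The missing idea is the coding of ${}^{\kappa}\lambda$ as such a family. To $h \in {}^{\kappa}\lambda$ associate $f_h$ with $f_h(i) = h \restriction i$; since $\kappa = \cof(\lambda)$ is regular and the sequence $( \lambda_i : i < \kappa )$ is continuous, for club-many $i$ one has $\operatorname{range}(h \restriction i) \subseteq \lambda_i$, so $f_h(i)$ lies in ${}^{i}(\lambda_i)$, a set of size $\lambda_i^{|i|} \leq \lambda_i^{\kappa} \leq \lambda_i^{+}$ by your Step 1; and distinct $h, h'$ give $f_h, f_{h'}$ differing from the first point of disagreement onward, so $\{ f_h : h \in {}^{\kappa}\lambda \}$ is an eventually different (a fortiori almost disjoint) family of size $\lambda^{\kappa}$, to which the $\lambda^{+}$ bound applies, yielding the contradiction. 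One cosmetic misattribution: well-foundedness of your order $<^{*}$ follows from $\sigma$-completeness of the nonstationary ideal on $\kappa$ (a countable union of nonstationary sets is nonstationary, which is where $\kappa > \omega$ enters), not from Fodor; Fodor is what drives the counting lemma inside the rank induction. Your closing diagnosis of why the argument dies at $\cof(\lambda) = \omega$ is nonetheless correct.
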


We also use Shelah's PCF theory. Since we will only be working with singular
cardinals of cofinality $\omega$ we make the relevant definitions only in this case.
Let $\vec{\lambda} = ( \lambda_n : n \in \omega )$ be a strictly increasing
sequence of regular cardinals, and let $\lambda = \sup_{n \in \omega} \lambda_n$.
We let $\prod \vec{\lambda}$ denote $\prod_{n \in \omega} \lambda_n$.
For a set $x$ of ordinals with $|x| < \lambda_0$
let $\chi_x^{\vec{\lambda}} \in \prod \vec{\lambda}$ be the characteristic function of $x$,
i.e.~$\chi_x^{\vec{\lambda}} (n) = \bsup ( x \cap \lambda_n )$ for each $n \in \omega$.
We will omit the superscript $\vec{\lambda}$ in $\chi_x^{\vec{\lambda}}$ if it is clear
from the context.

For functions $f , g : \omega \to \mrm{On}$ we use the following notation:
\begin{eqnarray*}
f < g & \defarrow & \forall n  \; f(n) < g(n) \\
f <^* g & \defarrow & \exists m \; \forall n \geq m  \; f(n) < g(n) \\
f =^* g & \defarrow & \exists m \; \forall n \geq m  \; f(n) = g(n)
\end{eqnarray*}
Moreover for $m < \omega$ we use the following:
\begin{eqnarray*}
f <_m g & \defarrow & \forall n \geq m  \; f(n) < g(n) \\
f =_m g & \defarrow & \forall n \geq m  \; f(n) = g(n)
\end{eqnarray*}
$f \leq g$, $f \leq^* g$ and $f \leq_m g$ are defined in the same way as
$f < g$, $f <^* g$ and $f <_m g$.

A $<^*$-increasing cofinal sequence in $\prod \vec{\lambda}$ of length $\lambda^+$
is called a \emph{scale} on $\vec{\lambda}$.
A scale $( f_\beta : \beta < \lambda^+ )$ is called
a \emph{better scale} if for any $\alpha < \lambda^+$ of uncountable cofinality
there exists a club $C \subseteq \alpha$ and $\sigma : C \to \omega$ such that
for any $\beta , \gamma \in C$ with $\beta < \gamma$
we have $f_\beta <_{\max \{ \sigma ( \beta ) , \sigma ( \gamma ) \}} f_\gamma$.
We use the following fact.

\begin{fact}[Shelah \cite{Sh_CA}] \label{fact:better_scale}
Suppose that $\lambda$ is a singular cardinal of cofinality $\omega$ such that
$\mu^\omega < \lambda$ for all $\mu < \lambda$ and such that $\lambda^\omega > \lambda^+$.
Then there exists a strictly increasing sequence of regular cardinals of length $\omega$
which converges to $\lambda$ and on which a better scale exists.
\end{fact}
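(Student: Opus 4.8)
The plan is to deduce the statement from Shelah's PCF structure theory, the two hypotheses serving to place us in the regime in which the characteristic functions of countable subsets of $\lambda$ are governed by a scale of length $\lambda^+$. First I would unwind the assumptions. Since there is some $\mu$ with $2 \le \mu < \lambda$, the hypothesis $\mu^\omega < \lambda$ gives $2^\omega < \lambda$; and by the standard computation of powers of a singular strong limit of countable cofinality one has $\lambda^\omega = \mathrm{pp}(\lambda)$, so that $\lambda^\omega > \lambda^+$ becomes $\mathrm{pp}(\lambda) > \lambda^+$. Hence, for a suitable cofinal set $A$ of regular cardinals below $\lambda$, $\max \mathrm{pcf}(A) = \lambda^\omega$, so $\mathrm{pcf}(A)$ contains cardinals strictly between $\lambda^+$ and $\lambda^\omega$; it is this richness of the pcf structure that will be exploited.

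Next I would fix the sequence and a scale. By Shelah's theorem on the existence of scales there is a strictly increasing sequence $\vec\lambda = ( \lambda_n : n \in \omega )$ of regular cardinals converging to $\lambda$, which after discarding finitely many terms we may assume satisfies $\lambda_0 > 2^\omega$, such that $\prod \vec\lambda$ has true cofinality $\lambda^+$ modulo the ideal $J^{\mathrm{bd}}$ of bounded subsets of $\omega$; let $\vec f = ( f_\beta : \beta < \lambda^+ )$ be a scale on it. The task then reduces to arranging, by a suitable choice of $\vec\lambda$ and $\vec f$, that every $\alpha < \lambda^+$ with $\cof ( \alpha ) > \omega$ carries a club $C \subseteq \alpha$ and a map $\sigma : C \to \omega$ with $f_\beta <_{\max \{ \sigma ( \beta ) , \sigma ( \gamma ) \}} f_\gamma$ for all $\beta < \gamma$ in $C$.

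For the verification I would split the points $\alpha$ of uncountable cofinality according to the size of $\cof ( \alpha )$. When $\cof ( \alpha ) > 2^\omega$, the bound $\lambda_0 > 2^\omega$ lets Shelah's Trichotomy Theorem exclude the chaotic alternative, so $\vec f \restriction \alpha$ has an exact upper bound $g$ with $\cof ( g(n) ) = \cof ( \alpha )$ for almost all $n$; from $g$ one reads off a club $C \subseteq \alpha$ on which the coordinate maps $\beta \mapsto f_\beta (n)$ are eventually strictly increasing with a \emph{uniform} threshold, so $\alpha$ is good and a fortiori better. The genuinely delicate case is $\aleph_1 \le \cof ( \alpha ) \le 2^\omega$, where the exact upper bound may fail to have the correct cofinalities and no uniform threshold need exist; it is exactly here that the freedom to let $\sigma$ vary is indispensable.

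The hard part is this last case, and I would handle it not by verifying betterness of an arbitrary scale but by building the scale recursively so as to make it better by construction. Using the system of (transitive) pcf generators $( B_\theta : \theta \in \mathrm{pcf}(\vec\lambda) )$, whose abundance is supplied by $\mathrm{pp}(\lambda) > \lambda^+$, one chooses at each stage $\beta$ of uncountable cofinality the value $f_\beta$ to be a canonical exact upper bound of $\vec f \restriction \beta$ singled out by the generators, and sets $\sigma ( \beta )$ equal to the finite coordinate past which the generator capturing $f_\beta$ dominates. The coherence of the generators modulo the ideals $J_{<\theta}$ then forces $f_\beta <_{\max \{ \sigma ( \beta ) , \sigma ( \gamma ) \}} f_\gamma$ along a club, which is precisely the better-scale inequality. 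I expect the main obstacle to be controlling the oscillation of the partial scale below points of small uncountable cofinality and extracting the thresholds $\sigma ( \beta )$ coherently; this is the technical core of the argument, and it is carried out in detail by Shelah \cite{Sh_CA}.
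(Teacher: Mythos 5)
The paper does not prove this statement at all: it is quoted as a black box from Shelah's \emph{Cardinal Arithmetic} \cite{Sh_CA}, so there is no internal argument to compare yours against, and a proposal that ends by deferring ``the technical core'' to the very same citation has not gone beyond what the paper itself does. Within your sketch, the peripheral reductions are essentially right: $2^\omega < \lambda$ follows from the hypothesis with $\mu = 2$; under $\mu^\omega < \lambda$ for all $\mu < \lambda$ one indeed gets $\lambda^\omega = \mathrm{pp}(\lambda)$ for $\cof(\lambda) = \omega$, though not by ``the standard computation for strong limits'' ($\lambda$ is not assumed to be a strong limit) but by Shelah's $\mathrm{cov}$ versus $\mathrm{pp}$ theorem, which deserves explicit mention; a scale of length $\lambda^+$ on some $\vec{\lambda}$ with $\lambda_0 > 2^\omega$ exists; and points $\alpha$ with $\cof(\alpha) > 2^\omega$ are good for any such scale by the Trichotomy Theorem, whence better at those points (with constant $\sigma$, after passing from an unbounded set to a club via the exact upper bound).

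The genuine gap is exactly where you locate the difficulty and then wave it away: points of cofinality in $[\omega_1, 2^\omega]$ --- note this case is always nonempty (under $\mathrm{CH}$ it is precisely the points of cofinality $\omega_1$, which are the ones the applications in Sections \ref{sec:wsquare} and \ref{sec:sch} actually exploit). Your recursion ``choose $f_\beta$ to be a canonical exact upper bound of $\vec f \rst \beta$ singled out by the generators'' cannot be carried out as stated: for $\cof(\beta) \leq 2^\omega$ an exact upper bound of the initial segment need not exist at all --- that is precisely the chaotic regime that trichotomy fails to exclude below the $2^\omega$ threshold --- and even when one exists it need not lie in $\prod \vec{\lambda}$, so it cannot simply be appointed as the next scale element. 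Moreover, nothing in your sketch actually \emph{uses} $\mathrm{pp}(\lambda) > \lambda^+$ in a working mechanism: pcf generators exist for any set of regulars regardless of the value of $\mathrm{pp}(\lambda)$, and their coherence modulo the ideals $J_{<\theta}$ does not by itself produce the variable-threshold inequality. This hypothesis is essential (if better scales existed whenever scales do, Theorem \ref{thm:better_scale} would make $\ssref$ inconsistent), and in the known arguments it enters concretely: one takes a $<_{J^{\mathrm{bd}}}$-increasing sequence of length $\lambda^{++}$ in some product (available since $\mathrm{pp}(\lambda) \geq \lambda^{++}$) and manufactures the better $\lambda^+$-scale by analyzing its initial segment of length $\lambda^+$. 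That construction is the entire content of the Fact, and it is the one step your outline replaces with the appeal to \cite{Sh_CA}; as a self-contained proof the proposal is therefore incomplete precisely where the statement has content.
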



\section{Failure of weak square} \label{sec:wsquare}

It is known, due to Veli\v{c}kovi\'{c} \cite{Vel}, that $\sref$ implies
the failure of $\square ( \lambda )$ for all regular $\lambda \geq \omega_2$.
Recall that $\square ( \lambda )$ says that there is a
sequence $( C_\alpha : \alpha \in \ltpt ( \lambda ) )$
such that
\begin{renumerate}
\item $C_\alpha$ is a club subset of $\alpha$, for all $\alpha$,
\item if $\alpha \in \ltpt ( C_\beta )$, then $C_\alpha = C_\beta \cap \alpha$,
\item there are no club $C \subseteq \lambda$ with
$C \cap \alpha = C_\alpha$ for all $\alpha \in \ltpt (C)$.
\end{renumerate}

\noindent
In this section we prove that $\ssref$ also denies $\square ( \lambda )$
for all regular $\lambda \geq \omega_2$:

\begin{theorem} \label{thm:weak_square}
Assume that $\lambda$ is a regular cardinal $\geq \omega_2$
and that $\ssref ( \lambda )$ holds.
Then $\square ( \lambda )$ fails.
\end{theorem}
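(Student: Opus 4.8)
The plan is to argue by contradiction, so suppose $\square(\lambda)$ holds and fix a witnessing sequence $\vec{C}=(C_\alpha : \alpha\in\ltpt(\lambda))$. Fix a regular $\theta$ much larger than $\lambda$, a well ordering $\unlhd$ of $H_\theta$, and work with the structure $\mathfrak{A}=(H_\theta,\in,\unlhd,\vec{C},\lambda)$. Following the strategy of Veli\v{c}kovi\'{c} \cite{Vel} for $\sref$, I would like to manufacture from $\vec{C}$ a stationary set of countable subsets of $\lambda$ that cannot reflect, and then feed a suitable \emph{full} variant of it into $\ssref(\lambda)$. The one structural fact I would isolate first is a \emph{uniqueness of threads}: if $\gamma\in\ltpt(\lambda)$ and $d\subseteq\gamma$ is club with $d\cap\alpha=C_\alpha$ for all $\alpha\in\ltpt(d)$, then $d=C_\gamma$; indeed by coherence (clause (ii)) $d$ and $C_\gamma$ agree on the club $\ltpt(d)\cap\ltpt(C_\gamma)$, and agreement on a club forces $d=C_\gamma$. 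This tells me that reflection cannot be contradicted simply by producing ``a second thread'' through $\gamma=\sup I$; the contradiction must instead come from the genuine \emph{non-reflection} of the set I build.

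The device that lets the weaker principle $\ssref$ do the work of $\sref$ is to pass to an $\omega_1$-extension closure. Starting from any $S_0\subseteq[\lambda]^\omega$, set $S=\{y\in[\lambda]^\omega : \exists z\in S_0\ z\sqsubseteq y\}$. Since $\sqsubseteq$ is transitive, $S$ is closed under $\omega_1$-extensions, i.e.\ $S$ is full; and $S\supseteq S_0$, so $S$ is stationary whenever $S_0$ is. The point is that if $\ssref(\lambda)$ returns $I$ with $\omega_1\subseteq I$, $|I|=\omega_1$ and $S\cap[I]^\omega$ stationary in $[I]^\omega$, then every $y\in S\cap[I]^\omega$ has a witness $z\in S_0$ with $z\sqsubseteq y$ and $z\subseteq y\subseteq I$, so $z\in S_0\cap[I]^\omega$; thus $S\cap[I]^\omega$ is contained in the $\sqsubseteq$-closure of $S_0\cap[I]^\omega$ inside $[I]^\omega$, which is therefore stationary. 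In other words, reflecting the full set $S$ already forces $S_0\cap[I]^\omega$ to be \emph{semi-stationary} in $[I]^\omega$. Hence it suffices to build a stationary $S_0\subseteq[\lambda]^\omega$ for which $\square(\lambda)$ forbids $S_0\cap[I]^\omega$ from being semi-stationary in $[I]^\omega$, for every such $I$.

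For $S_0$ I would take a set that runs transversally to $\vec{C}$: for countable $x$ with $x\cap\omega_1\in\omega_1$ and $\gamma_x=\sup(x\cap\lambda)$, put $x\in S_0$ exactly when $x$ approaches $\gamma_x$ off the square club, i.e.\ $\sup(x\cap C_{\gamma_x})<\gamma_x$. Assuming $\ssref(\lambda)$ returns $I$ as above, a standard thinning of the reflection inside $\mathfrak{A}$ lets me take $I=N\cap\lambda$ for some $N\prec\mathfrak{A}$ of size $\omega_1$ with $\omega_1\subseteq N$ and $\vec C\in N$, and arrange that $\gamma=\sup I$ has $\cof(\gamma)=\omega_1$. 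Fixing an $\in$-increasing continuous chain $(N_\xi:\xi<\omega_1)$ of countable elementary submodels with union $N$, and setting $x_\xi=N_\xi\cap\lambda$, $\delta_\xi=\sup(x_\xi)$, the sequence $(\delta_\xi)$ is a continuous increasing enumeration of a club in $\gamma$, so for club-many $\xi$ we have $\delta_\xi\in\ltpt(C_\gamma)$ and hence $C_{\delta_\xi}=C_\gamma\cap\delta_\xi$ by coherence. Semi-stationarity of $S_0\cap[I]^\omega$ produces, along club-many $\xi$, a witness $z_\xi\in S_0\cap[I]^\omega$ with $z_\xi\sqsubseteq x_\xi$, whose off-square behaviour relative to $C_{\gamma_{z_\xi}}$ is now governed by the single club $C_\gamma$. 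The set $S_0$ is to be calibrated so that this global control by $C_\gamma$ is incompatible with club-many witnesses being simultaneously off-square, delivering the contradiction with $\ssref(\lambda)$ and hence the failure of $\square(\lambda)$.

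The main obstacle, and the real content of the argument, is the precise calibration of $S_0$: it must be genuinely stationary in $[\lambda]^\omega$ (so that $S$ falls under $\ssref(\lambda)$) while being non-semi-stationary on \emph{every} reflection set $I$, and these two demands pull in opposite directions. Verifying stationarity requires, given an arbitrary $F:[\lambda]^{<\omega}\to\lambda$, the construction of an $F$-closed $x$ whose sup is approached off $C_{\gamma_x}$ even though $F$-closure may keep pulling points of the square club into $x$; this is where the non-threadability of $\vec C$ (clause (iii)), together with Fact \ref{fact:skull_sup} to control $\sup(M\cap\lambda)$ for elementary submodels, has to be spent. Getting the order-type bookkeeping of $C_{\gamma_x}$ right — or replacing it by a property robust enough to survive both the passage to $\omega_1$-extensions and the weakening to semi-stationary reflection forced by the closure device — is the delicate step; the fullness of $S$ itself, by contrast, is immediate from transitivity of $\sqsubseteq$.
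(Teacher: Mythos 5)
Your reduction in the second paragraph is correct and is indeed the standard reading of $\ssref$: reflecting the $\sqsubseteq$-upward closure $S$ of $S_0$ to $I$ forces $S_0 \cap [I]^\omega$ to be semi-stationary in $[I]^\omega$. But this reduction saddles you with producing a stationary $S_0$ that is non-\emph{semi}-stationary on every $I$, and that is a far stronger demand than non-stationarity; your candidate $S_0 = \{x : \sup(x \cap C_{\gamma_x}) < \gamma_x\}$ does not meet it, and the failure occurs exactly at the sentence where you claim the witnesses' \lchon off-square behaviour relative to $C_{\gamma_{z_\xi}}$ is now governed by the single club $C_\gamma$\rchon. A witness $z \sqsubseteq y$ only satisfies $z \subseteq y$ and $z \cap \omega_1 = y \cap \omega_1$; its supremum is completely uncontrolled, and nothing forces $\sup z$ to be a limit point of $C_\gamma$ (nor even $\sup z_\xi = \sup x_\xi$). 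Coherence of $\vec{C}$ says nothing about $C_\zeta$ when $\zeta \notin \ltpt(C_\gamma)$, and for a typical $y \in [I]^\omega$ one can pick a limit $\zeta$ of $y$ with $y \cap \zeta \not\subseteq^* C_\zeta$ and then a cofinal $z' \subseteq y \cap \zeta$ eventually avoiding $C_\zeta$; adding $y \cap \omega_1$ gives a witness $z \sqsubseteq y$ in your $S_0$. So the $\sqsubseteq$-closure of $S_0 \cap [I]^\omega$ is in general stationary, and your intended contradiction evaporates. This sup-slippage under $\omega_1$-extensions is the central difficulty of the theorem, and your proposal flags it (\lchon the delicate step\rchon) but never resolves it.

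The paper resolves it by strengthening the \emph{reflection} side rather than the non-reflection side. It introduces the sup-preserving relation $\sqsubseteq^*$ (requiring $\bsup(x) = \bsup(y)$ and $\bsup(x \cap \gamma) = \bsup(y \cap \gamma)$ for $\gamma \in E^\lambda_{\omega_1} \cap x$) and proves Lemma \ref{lem:ssr_sup}: for a \emph{weakly full} stationary $X$, $\ssref(\lambda)$ yields an $I$ (chosen with minimal $\bsup$, via a Fodor argument) such that $X \cap [J]^\omega$ is stationary for \emph{every} $J \supseteq I$ with $\bsup(J) = \bsup(I)$ --- in particular for the ordinal $J = \bsup(I)$ itself. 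It then suffices to kill plain stationarity of $X \cap [\delta]^\omega$ for every ordinal $\delta$, which coherence of $\vec{C}$ does handle. Two further ingredients, both absent from your plan, make this work: (a) the set $X$ carries the extra clause that $\cof(\min(x \setminus \beta)) = \omega_1$ for all $\beta \in C_{\bsup(x)} \setminus \xi$, which is precisely what makes $X$ weakly full (the $\omega_1$-cofinality of the \lchon next point\rchon{} blocks a $\sqsubseteq^*$-extension from inserting new elements into the gaps above square-club points); and (b) the stationarity of $X$ is genuinely nontrivial and is proved via the Gale--Stewart game $G_1(\lambda, F)$ of Lemma \ref{lem:game1} combined with the nonthreadability clause of $\square(\lambda)$, where player I's strategy manufactures an $F$-closed set avoiding the intervals $[\alpha_m, \gamma_m)$ that cover $C_\delta$ above a fixed bound. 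Your proposal defers all of this under \lchon calibration\rchon, so what remains is a correct framing of the problem rather than a proof; the missing ideas are the $\sqsubseteq^*$/weakly-full machinery with sup-preserving reflection, the cofinality-$\omega_1$ clause, and the game argument for stationarity.
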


Our proof is based on that in \cite{Vel}.
To prove Theorem \ref{thm:weak_square} we need several preliminaries.

First we give a modification of $\ssref$, which is also used in Section \ref{sec:sch}.
For countable sets $x$ and $y$ we write $x \sqsubseteq^* y$ if
\begin{renumerate}
\item $x \sqsubseteq y$,
\item $\bsup (x) = \bsup (y)$,
\item $\bsup ( x \cap \gamma ) = \bsup ( y \cap \gamma )$ for all $\gamma \in E^\lambda_{\omega_1} \cap x$.
\end{renumerate}
Given $X \subseteq [ \lambda ]^\omega$, for some $\lambda \geq \omega_1$,
we say that $X$ is \emph{weakly full} if $X$ is upward closed under $\sqsubseteq^*$.

\begin{lemma} \label{lem:ssr_sup}
Assume that $\lambda \geq \omega_2$ and
that $\ssref ( \lambda )$ holds.
Then for any weakly full stationary $X \subseteq [ \lambda ]^\omega$
there exists $I \in [ \lambda ]^{\omega_1}$ including $\omega_1$ such that
$X \cap [J]^\omega$ is stationary for all $J \subseteq \lambda$ such that $I\subseteq J$
and $\bsup (J) = \bsup (I)$.
\end{lemma}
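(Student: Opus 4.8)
The plan is to deduce the lemma from $\ssref(\lambda)$ applied not to $X$ itself---which is only weakly full---but to its closure under $\omega_1$-extensions. Set $S=\{\,z\in[\lambda]^\omega:\exists x\in X\ (x\sqsubseteq z)\,\}$. Then $S$ is full, and since $X\subseteq S$ is stationary, $S$ is stationary; so $\ssref(\lambda)$ furnishes $I_0\in[\lambda]^{\omega_1}$ with $\omega_1\subseteq I_0$ and $S\cap[I_0]^\omega$ stationary in $[I_0]^\omega$. A standard lifting argument then upgrades this to robustness for $S$: for any $J\supseteq I_0$ the set $\{\,z\in[J]^\omega:z\cap I_0\in S\,\}$ is stationary, and since $\omega_1\subseteq I_0$ we have $z\cap I_0\sqsubseteq z$, so fullness of $S$ gives $z\in S$; hence $S\cap[J]^\omega$ is stationary for every $J\supseteq I_0$. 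Note that the hypothesis $\bsup(J)=\bsup(I)$ plays no role at this stage; it will be needed only in passing from $S$ back to $X$.

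Next I would fix the reflecting set $I$ with good closure properties. Let $\theta$ be a large regular cardinal, $\unlhd$ a well ordering of $H_\theta$, and $\frak A=(H_\theta,\in,\unlhd,X,S,I_0,\lambda)$. Choose an internally approachable continuous chain $\langle M_\xi:\xi<\omega_1\rangle$ of countable elementary submodels of $\frak A$ with $I_0,\lambda\in M_0$, $\langle M_\eta:\eta\le\xi\rangle\in M_{\xi+1}$, and $\sup_\xi(M_\xi\cap\omega_1)=\omega_1$; put $M=\bigcup_{\xi<\omega_1}M_\xi$ and $I=M\cap\lambda$. Then $|I|=\omega_1$, $\omega_1\subseteq I$, $I\supseteq I_0$, and $\delta:=\bsup(I)=\sup(M\cap\lambda)$ has cofinality $\omega_1$, so $\delta\in E^\lambda_{\omega_1}$. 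The point of this choice is the control supplied by Fact \ref{fact:skull_sup}: for every $\eta<\delta$ the hull $\skull^{\frak A}(M\cup\eta)$ meets $\lambda$ below $\delta$, so closing any subset of $\delta$ that is bounded in $\delta$ under the Skolem functions of $\frak A$ keeps it bounded in $\delta$. I would also record that, because $X\in M$ is stationary, $X\cap M$ is nonempty and each of its members is contained in $I$, giving elements of $X$ inside $[I]^\omega$ closed under any prescribed function lying in $M$.

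The heart of the argument is the conversion of stationarity of $S$ into stationarity of $X$ on $[J]^\omega$, for $J\supseteq I$ with $\bsup(J)=\delta$. Fix $F:[J]^{<\omega}\to J$; I must produce $y\in X\cap[J]^\omega$ closed under $F$ with $y\cap\omega_1\in\omega_1$. Since $\cof(\delta)=\omega_1>\omega$, the set of $F$-closure points $\rho<\delta$ is club in $\delta$. Using that $S\cap[J]^\omega$ is stationary I would pick $z\in S\cap[J]^\omega$ closed under $F$, under the $\frak A$-definable witness function $z\mapsto w(z)\in X$ with $w(z)\sqsubseteq z$, and under enough further Skolem functions of $\frak A$ to compute the suprema appearing in $\sqsubseteq^*$, with $z\cap\omega_1\in\omega_1$ and $\bsup(z)$ equal to an $F$-closure point. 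Writing $x=w(z)\in X$ and $y=\clsr_F(x)$, one checks $x\subseteq y\subseteq z$ (as $z$ is $F$-closed) and $y\cap\omega_1=x\cap\omega_1$ (the intersection is squeezed between those of $x$ and $z$, which agree since $x\sqsubseteq z$). The remaining task is to verify conditions (ii) and (iii) of $\sqsubseteq^*$, namely $\bsup(y)=\bsup(x)$ and $\bsup(y\cap\gamma)=\bsup(x\cap\gamma)$ for all $\gamma\in E^\lambda_{\omega_1}\cap x$; granting these, $x\sqsubseteq^* y$, and weak fullness of $X$ yields $y\in X\cap[J]^\omega$ closed under $F$, as required.

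The main obstacle is exactly the verification of (ii) and (iii), i.e.\ arranging that the $\omega_1$-extension witness $x$ is in fact a $\sqsubseteq^*$-predecessor of $y$. This is where the hypothesis $\bsup(J)=\bsup(I)=\delta$, the cofinality $\cof(\delta)=\omega_1$, and the closure afforded by Fact \ref{fact:skull_sup} must be combined: since $y$ is obtained by closing a bounded subset of $\delta$ under functions of $\frak A$, Fact \ref{fact:skull_sup} prevents $\bsup(y)$ from climbing past the chosen $F$-closure point, giving (ii); and for $\gamma\in E^\lambda_{\omega_1}\cap x$ the same control, applied below $\gamma$, prevents the closure from raising the supremum of $y\cap\gamma$ above that of $x\cap\gamma$, giving (iii). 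I expect that the delicate point is choosing which Skolem functions $z$ is closed under so that $w(z)$ is forced to be cofinal in $z$ both below $\delta$ and below each such $\gamma$, and that the equality $\bsup(J)=\bsup(I)$ is precisely what guarantees that $J$ contributes no new suprema below these critical points beyond those already present in $I=M\cap\lambda$.
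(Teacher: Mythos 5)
Your opening reduction is sound: the $\sqsubseteq$-closure $S$ of $X$ is full and stationary, $\ssref(\lambda)$ reflects it to some $I_0\supseteq\omega_1$, and the standard lifting argument plus fullness does give that $S\cap[J]^\omega$ is stationary for every $J\supseteq I_0$. The proof breaks down at the conversion step, and the two missing ideas are exactly the two pivots of the paper's argument. The first gap is clause (ii) of $\sqsubseteq^*$: you need witnesses $x\in X$ with $\bsup(x)$ equal to the top of the set you want to place in $X$, and nothing in your construction produces them. A witness $x\sqsubseteq z$ extracted from $z\in S$ may be bounded far below $\bsup(z)$, and passing to $y=\clsr_F(x)$ does not help: $F:[J]^{<\omega}\to J$ is handed to you only after $\frak A$ and $I$ are fixed, so $F$ is not available to the Skolem hulls of $\frak A$ and Fact \ref{fact:skull_sup} says nothing about $\clsr_F(x)$; even if you rebuilt the structure to include $F$, hull arguments would only bound $\bsup(y)$ by the next $F$-closure point, whereas (ii) demands the exact equality $\bsup(y)=\bsup(x)$, which simply fails whenever the closure climbs above $\sup(x)$. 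The paper manufactures sup-cofinal witnesses by a different device: among all reflecting sets supplied by $\ssref(\lambda)$ it chooses $I$ with \emph{least} $\bsup$, and then shows via Fodor that $Z_0=\{z\in[I]^\omega:\exists y\in Y,\ y\sqsubseteq z\wedge\bsup(y)=\bsup(z)\}$ is stationary --- otherwise pressing down on $\beta_z=\min(z\setminus\bsup(y_z))$ yields a fixed $\beta\in I$ such that $I\cap\beta$ is again a reflecting set of strictly smaller $\bsup$, a contradiction. Your remark that the hypothesis $\bsup(J)=\bsup(I)$ ``plays no role'' in the choice of $I$ is the tell: the $\bsup$-minimality of $I$ is the heart of the matter, and your $I$, the trace of an internally approachable chain over an arbitrary $I_0$, carries no such minimality --- it is consistent with everything you arranged that all witnesses for all $z\in S\cap[I]^\omega$ are bounded by a single $\beta\in I$.

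The second gap: you take the $\sqsubseteq$-closure of $X$ itself, whereas the paper first shrinks $X$ to the stationary set $Y=\{y\in X:\skull^{\frak A}(y)\cap\lambda=y\}$ and closes \emph{that}. This is what makes clause (iii) of $\sqsubseteq^*$ checkable: for $\gamma\in E^\lambda_{\omega_1}\cap y$ one compares $M=\skull^{\frak A}(y)$ with $N=\skull^{\frak A}(z)$, gets $\sup(M\cap\gamma)=\sup(N\cap\gamma)$ from $M\subseteq N$, $M\cap\omega_1=N\cap\omega_1$ and $\cof(\gamma)=\omega_1$, and then concludes $\bsup(y\cap\gamma)=\bsup(M\cap\gamma)\geq\bsup(z\cap\gamma)$ --- where the first equality holds \emph{only because} $y=M\cap\lambda$. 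For a general witness $x\in X$ there can be a genuine gap between $\sup(x\cap\gamma)$ and $\sup(\skull^{\frak A}(x)\cap\gamma)$, and $z$ (hence your $y\subseteq z$) may contain ordinals in that gap; closing $z$ under additional Skolem functions cannot repair a deficiency of $x$, which is what your ``delicate point'' paragraph is implicitly asking for. So both repairs are needed: restrict to hull-closed elements of $X$ before taking the full closure, and choose the reflecting set $\bsup$-minimal so that Fodor yields sup-cofinal witnesses; the hypothesis $\bsup(J)=\bsup(I)$ then enters only at the end, to transfer the stationarity of $Z_0$ from $[I]^\omega$ to $\{z\in[J]^\omega: z\cap I\in Z_0\wedge\bsup(z\cap I)=\bsup(z)\}$, whose elements are shown to lie in $X$.
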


\begin{proof}
Let $X$ be a weakly full stationary subset of $[ \lambda ]^\omega$.
Take a sufficiently large regular cardinal $\theta$ and a well-ordering $\unlhd$ of $H_\theta$,
and let $\frak{A} = ( H_\theta , \in , \unlhd , \lambda )$.
Let $Y$ be the set of all $y \in X$ with $\skull^\frak{A} (y) \cap \lambda = y$,
and let $\overline{Y}$ be the upward closure of $Y$ under $\sqsubseteq$.
By $\ssref ( \lambda )$ there is $I' \in [ \lambda ]^{\omega_1}$
such that $\omega_1 \subseteq I'$ and
$\overline{Y} \cap [I']^\omega$ is stationary.
Let $I$ be one of such $I'$ with the least $\bsup$.
We show that $I$ witnesses the lemma for $X$.
To this end, we make a preliminary definition. Let
\[
Z_0 = \{ z \in [I]^\omega : \exists y \in Y , \; y \sqsubseteq z \,\wedge\,
\bsup (y) = \bsup (z) \} \; .
\]

\begin{claim*}
$Z_0$ is stationary in $[I]^\omega$.
\end{claim*}

\noindent
\emph{Proof of Claim}.
Assume not. Then
\[
Z = \{ z \in [ I ]^\omega : \exists y \in Y , \;
y \sqsubseteq z \,\wedge\, \bsup (y) < \bsup (z) \}
\]
is stationary in $[I]^\omega$.
For each $z \in Z$, choose $y_z \in Y$ such that $y_z \sqsubseteq z$ and
$\bsup ( y_z ) < \bsup ( z )$,
and let $\beta_z = \min ( z \setminus \bsup ( y_z ) )$. Note that $\beta_z \geq \omega_1$.
By Fodor's lemma we can find $\beta$ such that
$Z' = \{ z \in Z : \beta_z = \beta \}$ is stationary in $[I]^\omega$.
Let $I' = I \cap \beta$.
Then $\{ z \cap \beta : z \in Z' \}$ is stationary in $[I']^\omega$.
Moreover $z \cap \beta \in \overline{Y}$ for each $z \in Z'$ because $z \cap \beta \sqsupseteq y_z$.
So $\overline{Y} \cap [I']^\omega$ is stationary. Note also that $\omega_1\subseteq I'$ and $|I'| = \omega_1$.
But $\bsup (I') < \bsup (I)$. This contradicts the choice of $I$.
\hfill $\square_{\mrm{Claim}}$

\bigskip

Now we prove that $I$ witnesses the lemma for $X$.
Take an arbitrary $J \subseteq \lambda$ with $I \subseteq J$ and $\bsup (J) = \bsup (I)$.
Let
\[
Z_1 = \{ z \in [J]^\omega : z \cap I \in Z_0 \,\wedge\, \bsup ( z \cap I ) = \bsup (z) \,\wedge\,
\skull^\frak{A} (z) \cap \omega_1 = z \cap \omega_1 \} \; .
\]
Then $Z_1$ is stationary in $[J]^\omega$ because $Z_0$ is stationary,
$\bsup (J) = \bsup (I)$, and $\omega_1 \subseteq J$.
We show that $Z_1 \subseteq X$.
In order to see this, take an arbitrary $z \in Z_1$. We prove that $z \in X$.
First we can take $y \in Y$ with $y \sqsubseteq z$ and $\bsup (y) = \bsup (z)$.
Recall that $Y \subseteq X$ and that $X$ is closed under $\sqsubseteq^*$.
So it suffices to prove that $y \sqsubseteq^* z$.
For this all we have to show is that $\bsup ( y \cap \gamma ) \geq \bsup ( z \cap \gamma )$
for every $\gamma \in E^\lambda_{\omega_1} \cap y$.
Suppose that $\gamma \in E^\lambda_{\omega_1} \cap y$.
Let $M = \skull^\frak{A} (y)$ and $N = \skull^\frak{A} (z)$.
Note that $M \cap \omega_1 = y \cap \omega_1 = z \cap \omega_1 = N \cap \omega_1$.
Then, since $\gamma \in M \subseteq N$ and $\cof ( \gamma ) = \omega_1$,
we have that $\bsup ( M \cap \gamma ) = \bsup ( N \cap \gamma )$.
Moreover $\bsup ( z \cap \gamma ) \leq \bsup ( N \cap \gamma )$,
and $\bsup ( y \cap \gamma ) = \bsup ( M \cap \gamma )$ by the definition of $Y$.
Hence $\bsup ( y \cap \gamma ) \geq \bsup ( z \cap \gamma )$.
\end{proof}

Next we present a game which will be used to construct
a weakly full stationary set.
Let $\lambda$ be a regular cardinal $\geq \omega_2$.
For a function $F : [ \lambda ]^{< \omega} \to \lambda$
let $G_1 ( \lambda , F )$ be the following game of length $\omega$:
\[
\begin{array}{c | c| c|c|c|c}
\mrm{I} & \alpha_0 \ \phantom{\beta_0} \ \gamma_0 &
\alpha_1 \ \phantom{\beta_1} \ \gamma_1 & \ \ \cdots \ \ &
\alpha_n \ \phantom{\beta_n} \ \gamma_n & \ \ \cdots \ \ \\
\hline
\mrm{II} & \phantom{\alpha_0} \ \beta_0 \ \phantom{\gamma_0} &
\phantom{\alpha_1} \ \beta_1 \ \phantom{\gamma_1} & \ \ \cdots \ \ &
\phantom{\alpha_n} \ \beta_n \ \phantom{\gamma_n} & \ \ \cdots \ \ \\
\end{array}
\]
I and II in turn choose ordinals $< \lambda$.
In the $n$-th stage, first I chooses $\alpha_n$, then II chooses $\beta_n$,
and then I again chooses $\gamma_n > \alpha_n , \beta_n$ of cofinality $\omega_1$.
I wins if
\[
\clsr_F ( \{ \gamma_n : n \in \omega \} ) \cap [ \alpha_m , \gamma_m )
= \emptyset
\]
for every $m \in \omega$.
Otherwise, II wins.

\begin{lemma} \label{lem:game1}
Let $\lambda$ be a regular cardinal $\geq \omega_2$ and
$F$ be a function from $[ \lambda ]^{< \omega}$ to $\lambda$.
Then I has a winning strategy for the game $G_1 ( \lambda , F )$.
\end{lemma}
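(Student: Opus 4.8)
The plan is to give player I an explicit strategy built from an $\in$-increasing chain of elementary submodels, in which each $\gamma_n$ is played as the supremum of one of these models. First I would fix a regular $\theta$ with $F,\lambda\in H_\theta$, a well ordering $\unlhd$ of $H_\theta$, and set $\frak{A}=(H_\theta,\in,\unlhd,F,\lambda)$. Alongside the play I will maintain an $\in$-increasing chain $N_0,N_1,\dots$ of elementary submodels of $\frak{A}$ with $|N_n|=\omega_1$, $\omega_1\cup\{F,\lambda\}\subseteq N_n$, $N_{n-1}\in N_n$, and with $\gamma_n:=\sup(N_n\cap\lambda)$ of cofinality $\omega_1$ (taking each $N_n$ internally approachable of length $\omega_1$). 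At stage $n$, knowing $\gamma_0<\cdots<\gamma_{n-1}$, I first plays
\[
\alpha_n=\sup\bigl(\clsr_F(\{\gamma_0,\dots,\gamma_{n-1}\})\bigr)+1,
\]
which is below $\lambda$ because this closure is countable and $\lambda$ is regular uncountable; after II plays $\beta_n$, I picks $N_n$ as above with $N_{n-1},\alpha_n,\beta_n\in N_n$ and $\gamma_n>\max(\alpha_n,\beta_n)$. Since $N_{n-1}\in N_n$ we get $\gamma_0,\dots,\gamma_{n-1}\in N_n$, so $\clsr_F(\{\gamma_0,\dots,\gamma_{n-1}\})\subseteq N_n$ as $N_n$ is closed under $F$.

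To see this wins I would set $C=\clsr_F(\{\gamma_n:n<\omega\})$ and use that $C=\bigcup_k\clsr_F(\{\gamma_0,\dots,\gamma_k\})$, so $C$ is countable; fixing $m$, the goal is $C\cap[\alpha_m,\gamma_m)=\emptyset$. The contribution of $\{\gamma_0,\dots,\gamma_{m-1}\}$ lies in $\clsr_F(\{\gamma_0,\dots,\gamma_{m-1}\})$, which is bounded below $\alpha_m$ by the choice of $\alpha_m$ and so misses the window. Every other element of $C\cap\gamma_m$ is an $F$-term some of whose arguments lie among the later generators $\gamma_m,\gamma_{m+1},\dots$, all $\ge\gamma_m$; since $\cof(\gamma_m)=\omega_1$ and $C$ is countable, $C\cap\gamma_m$ is in any case bounded below $\gamma_m$. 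The point is thus to keep such \emph{pulled-down} terms out of $[\alpha_m,\gamma_m)$. For the contributions of \emph{strictly} later generators I would secure this by the extra demand that passing from $N_{j-1}$ to $N_j$ creates no new ordinal below $\gamma_{j-1}$, i.e. $N_j\cap\gamma_{j-1}=N_{j-1}\cap\gamma_{j-1}$; this forces $C\cap\gamma_m=\clsr_F(\{\gamma_0,\dots,\gamma_m\})\cap\gamma_m$, so nothing entering after stage $m$ lands below $\gamma_m$ at all.

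The delicate point — the step I expect to be the main obstacle — is the self-contribution at stage $m$: ruling out that the passage from $\clsr_F(\{\gamma_0,\dots,\gamma_{m-1}\})$ to $\clsr_F(\{\gamma_0,\dots,\gamma_m\})$ produces a value in $[\alpha_m,\gamma_m)$, i.e. that the single new generator $\gamma_m=\sup(N_m\cap\lambda)$ does not pull an $F$-term down into its own protected window. This is exactly where the cofinality $\omega_1$ of $\gamma_m$ and Fact~\ref{fact:skull_sup} will be used: I would choose $N_m$ so that $\clsr_F\bigl(\clsr_F(\{\gamma_0,\dots,\gamma_{m-1}\})\cup\{\gamma_m\}\bigr)\cap\gamma_m$ stays bounded by $\alpha_m$, arguing that any ordinal obtained over $N_m$ from $\gamma_m$ and the earlier (sub-$\alpha_m$) closure points either exceeds $\gamma_m$ or was already present below $\alpha_m$. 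Verifying that a legal such $N_m$ — equivalently a cofinality-$\omega_1$ ordinal $\gamma_m>\max(\alpha_m,\beta_m)$ with this non-pull-down property — always exists is the technical heart of the proof; granting it, the conjunction over all $m$ shows I's strategy is winning, and what remains is routine bookkeeping about finite $F$-terms and the countability of $C$.
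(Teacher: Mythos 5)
Your proposal has a genuine gap, and in fact two of its steps fail as stated. First, the reduction you claim for ``strictly later generators'' does not work: even granting the end-extension demand $N_j\cap\gamma_{j-1}=N_{j-1}\cap\gamma_{j-1}$, what you can conclude about a term entering $C$ at stage $n>m$ with value $v<\gamma_m$ is only that $v\in N_{n+1}\cap\gamma_m=N_m\cap\gamma_m$. Since $\gamma_m=\sup(N_m\cap\lambda)$, the set $N_m\cap\gamma_m$ is cofinal in $\gamma_m$, so membership in it in no way keeps $v$ out of the window $[\alpha_m,\gamma_m)$; the claimed identity $C\cap\gamma_m=\clsr_F(\{\gamma_0,\dots,\gamma_m\})\cap\gamma_m$ simply does not follow (and when $\lambda=\omega_2$ and $N_m\cap\lambda$ is an ordinal, your end-extension condition is vacuous, so it visibly proves nothing about $F$-values). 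Where the \emph{value} of an $F$-term lands is not an end-extension property of the models. Moreover the condition conflicts with your own requirement $\beta_n\in N_n$: II may play $\beta_n<\gamma_{n-1}$ with $\beta_n\notin N_{n-1}$, making the two demands contradictory. Second, the step you yourself flag as ``the technical heart'' --- choosing $\gamma_m$ so that no $F$-term involving it (or later generators) is pulled down into $[\alpha_m,\gamma_m)$, with $\alpha_m$ already committed --- is exactly the whole content of the lemma; ``granting it'' grants the conclusion, so the proposal is not a proof. This difficulty is intrinsic to any attempt to play $\alpha_m$ \emph{before} the later generators are known.

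The paper dissolves this problem by not constructing a strategy at all. A loss for I in $G_1(\lambda,F)$ is witnessed at a finite stage (a finite $F$-term landing in some window), so the game is closed for I and the Gale--Stewart theorem applies. One assumes II has a winning strategy $\tau$, fixes \emph{in advance} an $\in$-chain $(M_n)_n$ of elementary submodels containing $F$ and $\tau$ with $\gamma_n=M_n\cap\lambda$ an ordinal of cofinality $\omega_1$, sets $x=\clsr_F(\{\gamma_n:n\in\omega\})$, and only \emph{then} defines $\alpha_n=\sup(x\cap\gamma_n)<\gamma_n$. Because the $\alpha_n$ are carved out of the already completed closure, the windows avoid $x$ by definition, and there is no pull-down problem to solve; since $M_n\cap\lambda$ is an ordinal containing the partial play and $\tau\in M_n$, elementarity gives $\beta_n<\gamma_n$, so the run is legal, II follows $\tau$, and yet I wins --- contradiction, whence determinacy hands I a winning strategy. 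Your model-chain machinery is close in spirit to the paper's, but the two missing ideas are precisely the appeal to determinacy and the a posteriori choice of the $\alpha_n$ from the full closure; without them the obstacle you identified is, as far as I can see, not circumventable by end-extension conditions on the $N_n$.
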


\begin{proof}
Since $G_1(\lambda,F)$ is an open game for player I, by the Gale-Stewart theorem, one
of the players has a winning strategy. Assume towards contradiction that
II has a winning strategy, say $\tau$.
We will find a play
$( \alpha_n , \beta_n , \gamma_n : n \in \omega )$ in which
II follows $\tau$, but which is won by I.

Let $\theta$ be a sufficiently large regular cardinal.
First, build an $\in$-chain $\{ M_n: n<\omega\}$ of elementary submodels
of $H_\theta$ containing $F$ and $\tau$ as elements and such that
$\gamma_n= M_n\cap \lambda$ is an ordinal $<\lambda$ of cofinality $\omega_1$.
Let $x = \clsr_F ( \{ \gamma_n : n \in \omega \} )$
and $\alpha_n = \sup ( x \cap \gamma_n )$, for each $n$.
Note that $\alpha_n < \gamma_n$, since $x$ is countable and $\cof ( \gamma_n ) = \omega_1$.
Finally let $( \beta_n : n \in \omega )$ be a sequence of II's moves
according $\tau$ against $( \alpha_n , \gamma_n : n \in \omega )$.
Note that $\beta_n < \gamma_n$ since
$\alpha_0 , \gamma_0 , \dots , \alpha_{n-1} , \gamma_{n-1} , \alpha_n \in
M_n$ and $M_n$ is an elementary submodel of $H_\theta$ containing $\tau$.
Now $( \alpha_n , \beta_n , \gamma_n : n \in \omega )$
is a legal play of $G_1 ( \lambda , F )$ in which II has followed $\tau$.
However,  $x \cap [ \alpha_n , \gamma_n ) = \emptyset$, for each $n$,
by the definition of the $\alpha_n$.
Therefore I wins this play, a contradiction. It follows that
I has a winning strategy in $G_1(\lambda,F)$ as required.
\end{proof}

Now we prove Theorem \ref{thm:weak_square}.

\begin{proof}[Proof of Theorem \ref{thm:weak_square}]
Assuming that $\square ( \lambda )$ holds, we prove that $\ssref ( \lambda )$ fails.
Let $\vec{C} = ( C_\alpha : \alpha \in \ltpt ( \lambda ) )$
be a $\square ( \lambda )$-sequence.
Let $X$ be the set of all $x \in [ \lambda ]^\omega$ which have limit order type
and there is $\xi <\bsup(x)$ such that:

\begin{enumerate}
\item $\sup (x\cap C_{\bsup (x)}) \leq \xi$,
\item $\cof (\min (x \setminus \beta))=\omega_1$, for all $\beta \in C_{\bsup (x)} \setminus \xi$.
\end{enumerate}

Here note that $X$ is weakly full.
So it suffices to prove the following claims.

\begin{claim}
$X$ is stationary in $[ \lambda ]^\omega$.
\end{claim}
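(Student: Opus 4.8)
The plan is to verify stationarity through the closure characterization recalled in Section~\ref{sec:preliminaries}: it suffices to fix an arbitrary $F : [\lambda]^{<\omega} \to \lambda$ and to produce a single $x \in X$ that is closed under $F$ (and satisfies $x \cap \omega_1 \in \omega_1$, which I will arrange by absorbing the relevant Skolem functions into $F$). Unwinding the definition of $X$, what I must manufacture is a countable $x$ of limit order type together with a threshold $\xi < \delta$, where $\delta := \bsup(x)$, such that $x \cap C_\delta \subseteq \xi$ and $\cof(\min(x \setminus \beta)) = \omega_1$ for every $\beta \in C_\delta \setminus \xi$.

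The raw material will be supplied by Lemma~\ref{lem:game1}. I fix a winning strategy $\sigma$ for player~I in $G_1(\lambda, F)$. For any play $(\alpha_n, \beta_n, \gamma_n : n \in \omega)$ in which I follows $\sigma$, the set $x := \clsr_F(\{\gamma_n : n \in \omega\})$ is countable and cofinal in $\delta = \sup_n \gamma_n = \bsup(x)$, which has cofinality $\omega$; each $\gamma_n$ lies in $x$ and has cofinality $\omega_1$; and the winning condition hands me, for free, the gap property $x \cap [\alpha_m, \gamma_m) = \emptyset$ for every $m$. Thus I already have, below each cap $\gamma_m$ of cofinality $\omega_1$, a clean interval $[\alpha_m, \gamma_m)$ disjoint from $x$. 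The crucial observation is that a single such gap can absorb an entire block of points of $C_\delta$: if $\beta \in C_\delta \cap [\alpha_m, \gamma_m)$ then $\min(x \setminus \beta) = \gamma_m$, which has cofinality $\omega_1$.

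Accordingly, the goal reduces to steering the play so that, for some $\xi < \delta$, one has $C_\delta \cap [\xi, \delta) \subseteq \bigcup_m [\alpha_m, \gamma_m)$. Granting this inclusion, condition (2) holds on $C_\delta \setminus \xi$ by the observation above, while $x \cap C_\delta \subseteq \xi$ since $x$ is disjoint from every gap, giving condition (1); together with the limit order type this yields $x \in X$. To force the inclusion I will, at stage $n$, use player~II's move $\beta_n$ to push the next cap $\gamma_n$ past the next block of points of $C_\delta$ while keeping the occupied interval $[\gamma_{n-1}, \alpha_n)$ free of points of $C_\delta$; equivalently, I must guarantee that $\sup(x \cap \gamma_n)$ stays below the first point of $C_\delta$ lying at or above $\gamma_{n-1}$.

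The main obstacle is the evident circularity: the club $C_\delta$ is determined only by $\delta$, which in turn emerges only in the limit of the play, so II cannot literally read off points of $C_\delta$ at a finite stage. I plan to break this circularity by running the construction along an $\in$-chain of elementary submodels $M_n$ (of size $\omega_1$, with $\gamma_n = M_n \cap \lambda$ of cofinality $\omega_1$) of a structure expanding $(H_\theta, \in, \unlhd, \lambda, \vec{C}, F, \sigma)$, and by exploiting the coherence clause (ii) of the $\square(\lambda)$-sequence: the local clubs attached to the $\gamma_n$ cohere on a club of common limit points, which lets II compute from below the correct initial data of the tail of $C_\delta$ and place its moves just above the current occupied interval. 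Checking that, with these moves, the occupied intervals genuinely avoid $C_\delta$ --- that is, that $x$ never creeps above the first relevant $C_\delta$-point before the next cap --- is the delicate point on which the whole argument turns, and it is where the cofinality $\omega_1$ of the caps $\gamma_n$ is used in an essential way.
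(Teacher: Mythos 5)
Your first half matches the paper exactly: fix $F$, fix a winning strategy $\tau$ for player I in $G_1(\lambda,F)$ via Lemma~\ref{lem:game1}, and observe that the gaps do all the work, i.e.\ if $\beta \in C_\delta \cap [\alpha_m,\gamma_m)$ then $\min(x\setminus\beta)=\gamma_m$ has cofinality $\omega_1$, so it suffices to arrange $C_\delta \cap [\xi,\delta) \subseteq \bigcup_m [\alpha_m,\gamma_m)$ for $\delta=\bsup(x)$. But your mechanism for arranging this is where the proposal breaks, in two ways. First, a role reversal: in this claim \emph{you} control player II, whose only moves are the $\beta_n$; the caps $\gamma_n$ are dictated by I's strategy $\tau$, so you cannot stipulate $\gamma_n = M_n\cap\lambda$ for a chain of models. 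That device belongs to the proof of Lemma~\ref{lem:game1} itself, where one refutes a hypothetical strategy of II and therefore controls I's moves; here it is simply unavailable.

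Second, and more fundamentally, your plan to ``compute the tail of $C_\delta$ from below'' invokes only the coherence clause (ii), and no argument using only clauses (i) and (ii) can possibly succeed: the sequence $C_\alpha=\alpha$ satisfies (i) and (ii), yet for it $X=\emptyset$, since $x\cap C_{\bsup(x)}=x$ is unbounded in $\bsup(x)$ for every $x$ of limit order type. So the non-threading clause (iii) must be used somewhere, and that is precisely how the paper dissolves the circularity you correctly identified --- not by in-play prediction, but by fixing $\delta$ \emph{before} the play: letting $C$ be the club of limit ordinals closed under $\tau$ and $F$, clause (iii) yields $\delta \in \ltpt(C)\cap E^\lambda_\omega$ with $C\cap\delta\setminus C_\delta$ unbounded in $\delta$; one picks $\delta_n \in C\cap\delta\setminus C_\delta$ cofinal in $\delta$ and predetermines II's moves $\beta_n<\delta_n$ with $[\beta_n,\delta_n)\cap C_\delta=\emptyset$. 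Since each $\delta_n$ is closed under $\tau$, I's responses satisfy $\gamma_n<\delta_n$ and $\alpha_{n+1}<\delta_n$, so $\bsup(x)=\delta$ as planned, the ``occupied'' region $[\gamma_n,\alpha_{n+1})\subseteq[\beta_n,\delta_n)$ misses $C_\delta$ by the choice of $\beta_n$ (your flagged ``delicate point'' becomes trivial once $\delta$ is fixed in advance), and $C_\delta\cap[\delta_n,\delta_{n+1})\subseteq[\alpha_{n+1},\gamma_{n+1})$, so $\xi=\delta_0$ witnesses $x\in X$. Note also that coherence could not have helped even in principle: $\delta=\bsup(x)$ necessarily has cofinality $\omega$, and $C_\delta$ may then have order type $\omega$ and consist of successor ordinals, in which case clause (ii) imposes no constraint on it from below.
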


\noindent
\begin{proof}
Take an arbitrary function $F : [ \lambda ]^{< \omega} \to \lambda$.
We find $x \in X$ closed under $F$.
By Lemma \ref{lem:game1} fix a winning strategy
$\tau$ of I for $G_1 ( \lambda , F )$.
Moreover let $C$ be the set of all limit ordinals $\beta < \lambda$ closed under $\tau$ and $F$.
Note that $C$ is club in $\lambda$.
Then, since $\vec{C}$ is a $\square ( \lambda )$-sequence,
there exists $\delta \in \ltpt (C) \cap E^\lambda_\omega$
such that $C \cap \delta \setminus C_\delta$ is unbounded in $\delta$.
Take a strictly increasing sequence $( \delta_n : n \in \omega )$
in $C \cap \delta \setminus C_\delta$ which is cofinal in $\delta$.
For each $n \in \omega$ we can take $\beta_n < \delta_n$ such that
$[ \beta_n , \delta_n ) \cap C_\delta = \emptyset$
because $\delta_n$ is a limit ordinal which is not in $C_\delta$.
Then let $( \alpha_n , \gamma_n : n \in \omega )$
be a sequence of I's moves according to $\tau$ against $( \beta_n : n \in \omega )$.
Moreover let
$x = \clsr_F ( \{ \gamma_n : n \in \omega \} )$.
It suffices to prove that $x \in X$.

To see this, first note that $\bsup (x) = \delta$ because $\delta$ is closed under $F$.
Next note that $\alpha_{n+1} < \delta_n$ for each $n \in \omega$
because $\delta_n$ is closed under $\tau$.
Moreover $C_\delta \cap \delta_{n+1} \subseteq \beta_{n+1} \subseteq \gamma_{n+1}$
by the choice of $\beta_{n+1}$.
Hence $C_\delta \cap [ \delta_n , \delta_{n+1} ) \subseteq [ \alpha_{n+1} , \gamma_{n+1} )$
for every $n \in \omega$.
Note that $x \cap [ \alpha_{n+1} , \gamma_{n+1} ) = \emptyset$ for each $n \in \omega$
because I wins with the play $( \alpha_n , \beta_n , \gamma_n : n \in \omega )$.
Thus $x \cap C_\delta \subseteq \delta_0$.
Moreover $\min ( x \setminus \beta ) = \gamma_{n+1}$
for all $\beta \in C_\delta \cap [ \delta_n , \delta_{n+1} )$,
and $\cof ( \gamma_{n+1} ) = \omega_1$ by the rule of $G_1 ( \lambda , F )$.
Therefore $\xi = \delta_0$ witnesses that $x \in X$.
\end{proof}

\bigskip

\begin{claim}
The conclusion of Lemma \ref{lem:ssr_sup} fails for $X$.
\end{claim}

\begin{proof} It suffices to prove that $X \cap [ \delta ]^\omega$ is non stationary
for every ordinal $\delta \in \lambda \setminus \omega_1$.
If $\delta$ is a successor ordinal, then $X \cap [ \delta ]^\omega$
is clearly non stationary.
Next suppose that $\cof ( \delta ) = \omega$.
Let $Z_0$ be the set of all $z \in [ \delta ]^\omega$
such that $\bsup (z) = \delta$ and such that
$z \cap C_\delta$ is unbounded in $\delta$.
Then $Z_0$ is club in $[ \delta ]^\omega$, and $X \cap Z_0 = \emptyset$.
Thus $X \cap [ \delta ]^\omega$ is non stationary.

Finally suppose that $\cof ( \delta ) > \omega$.
Let $Z_1$ be the set of all $z \in [ \delta ]^\omega$
such that $\bsup (z) \in \ltpt ( C_\delta )$ and
such that $z \cap C_\delta$ is unbounded in $\bsup (z)$.
Then $Z_1$ is club in $[ \delta ]^\omega$.
Here note that $z \cap C_{\bsup (z)}$ is unbounded in $\bsup (z)$ for each $z \in Z_1$
because $C_\delta \cap \bsup (z) = C_{\bsup (z)}$.
So $X \cap Z_1 = \emptyset$.
\end{proof}
This concludes the proof of Theorem \ref{thm:weak_square}.
\end{proof}
\setcounter{claim}{0}


\section{$\mrm{ITP}$ and $\mrm{TP}$} \label{sec:itp_tp}

In this section we prove that $\sref + \mrm{MA}_{\omega_1} ( \mrm{Cohen} )$
implies the super tree property at $\omega_2$ and that
$\ssref + \mrm{MA}_{\omega_1} ( \mrm{Cohen} )$ implies the strong tree property
at $\omega_2$.
Here note that the tree property at $\omega_2$ implies the failure of $\mrm{CH}$
and that $\sref$ and $\ssref$ are consistent with $\mrm{CH}$.
So $\sref$ or $\ssref$ alone does not imply the super or strong tree property at $\omega_2$,
respectively.
We also prove that $\ssref + \mrm{MA}_{\omega_1} ( \mrm{Cohen} )$ does not imply
the super tree property at $\omega_2$.

\newpage

\begin{theorem} \label{thm:itp_tp}
${}$
\begin{aenumerate}
\item[(a)] If $\sref$ and $\mrm{MA}_{\omega_1} ( \mrm{Cohen} )$ hold,
then $\omega_2$ has the super tree property.
\item[(b)] If $\ssref$ and $\mrm{MA}_{\omega_1} ( \mrm{Cohen} )$ hold,
then $\omega_2$ has the strong tree property.
\end{aenumerate}
\end{theorem}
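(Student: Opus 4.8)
The plan is to handle both parts by a single two-movement scheme — a reflection step followed by an amalgamation step — and to locate the difference between $\sref$ and $\ssref$ precisely in the rigidity of the level sequence. Fix $\lambda \geq \omega_2$, a thin $(\omega_2,\lambda)$-tree $\F$, and, for part~(a), a level sequence $\vec{f} = ( f_u : u \in [\lambda]^{<\omega_2})$. Fix a large regular $\theta$, a well-ordering $\unlhd$ of $H_\theta$, and work with $\frak{A} = (H_\theta, \in, \unlhd, \F, \vec{f}, \lambda)$. For a countable $M \prec \frak{A}$ write $x_M = M \cap \lambda \in [\lambda]^\omega$; since $x_M \in [\lambda]^{<\omega_2}$ the node $f_{x_M} \in \lev_{x_M}(\F)$ is defined. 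The goal for (a) is a cofinal branch $b : \lambda \to \{0,1\}$ with $\{ u : b \rst u = f_u \}$ stationary in $[\lambda]^{<\omega_2}$; for (b) it is merely a cofinal branch.

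First I would carry out the reflection step. The idea is to isolate a stationary set $S \subseteq [\lambda]^\omega$ of \emph{coherent} countable approximations, namely those $x$ for which the nodes attached along an $\in$-increasing $\omega$-chain of submodels cofinal in $x$ amalgamate into a single node over $x$ that, for (a), equals $f_x$; using $\skull^{\frak{A}}$ and Fact~\ref{fact:skull_sup} one checks $S$ is stationary. The decisive point is fullness. For the cofinal-branch target~(b) we are free to choose the amalgamated node over each $x$, so the relevant set can be arranged to be closed under $\omega_1$-extensions, i.e.\ \emph{full}, and $\ssref(\lambda)$ applies. For the ineffable target~(a) the sequence $\vec{f}$ is given and rigid, so the set of \emph{correct} approximations is not full, and one must invoke the full strength of $\sref(\lambda)$. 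In either case one obtains $I \in [\lambda]^{\omega_1}$ with $\omega_1 \subseteq I$ such that the governing stationary set meets $[I]^\omega$ stationarily; a pressing-down argument over the stationary set of coherent $x \subseteq I$ into the $\leq \omega_1$-sized level $\lev_I(\F)$ then pins down a single node $b_I \in \lev_I(\F)$ with $\{ x \in [I]^\omega : b_I \rst x = f_x \}$ stationary in $[I]^\omega$.

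Next I would use $\MAC$ to promote this reflected node to a cofinal branch of $\F$ over all of $\lambda$. By elementarity it suffices to thread the construction through an $\in$-increasing chain of reflecting models covering $\lambda$ and, at each stage, to extend the current node coherently past the next element of a cofinal-in-$\lambda$ set; the reflection of the preceding step has reduced this extension problem to a finite-condition (Cohen-type) construction of size $\omega_1$, thinness of $\F$ keeping the sets of obstructions small enough that the relevant poset is ccc. Meeting the $\omega_1$ many dense sets that make the approximation total and cofinal, and (using the reflected stationarity) that preserve agreement with $\vec{f}$ on stationarily many countable sets, $\MAC$ yields a single coherent branch $b : \lambda \to \{0,1\}$ in $V$. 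For (b) this $b$ is already the desired cofinal branch; for (a) the local stationary-agreement sets assemble along the chain, by a Fodor argument, into stationarity of $\{ u : b \rst u = f_u \}$ in $[\lambda]^{<\omega_2}$, so $b$ is ineffable for $\vec{f}$.

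The hard part will be the interface between the two cardinals. The reflection principles speak about $[\lambda]^\omega$, whereas ineffability is a statement about $[\lambda]^{<\omega_2}$, so one must show that stationary agreement of the generic branch with $\vec{f}$ on countable sets — established slice-by-slice at the reflecting models — genuinely yields stationarity in the two-cardinal sense and survives the gluing of the local branches along the chain. This gluing-plus-reflection is exactly where the stronger, supercompact-flavoured reflection of $\sref$ is indispensable for (a), while the weaker, strongly-compact-flavoured $\ssref$ suffices for (b): a bare cofinal branch imposes no rigidity, so the governing stationary set can be taken full, and $\MAC$ is then left only to seal the branch.
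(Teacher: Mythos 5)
Your architecture has a genuine and, I believe, irreparable gap in its second movement. $\MAC$ is Martin's Axiom for \emph{Cohen} forcing only, i.e.\ for countable posets; you cannot invoke it for a ``finite-condition construction of size $\omega_1$'' that you argue is ccc from thinness. Worse, even full $\mrm{MA}_{\omega_1}$ could not do what you ask of it: a filter meeting $\omega_1$ many dense sets in a poset of small conditions determines an object of size at most $\aleph_1$, so it cannot produce a total $b : \lambda \to 2$ once $\lambda > \omega_1$. Your fallback --- gluing local branches along an $\in$-chain of size-$\omega_1$ models covering $\lambda$ --- is exactly the branch-existence problem you started with: a thin $(\omega_2,\lambda)$-tree has levels of size up to $\aleph_1$, so the branches through $\F \rst (K\cap\lambda)$ chosen at successive models need not cohere, and nothing in the reflection step forces them to (reflection hands you a node $b_I$ on the single level $I$, which constrains nothing beyond $I$). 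Relatedly, your pressing-down over $[I]^\omega$ is not well-posed: the map $x \mapsto (\text{node over } x)$ takes values in the varying levels $\lev_x(\F)$, not in a fixed set, and in any case ineffability requires stationarily many size-$\aleph_1$ witnesses in $[\lambda]^{<\omega_2}$, which a single $I$ (or a chain of them) cannot supply.

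The paper inverts your two movements, and this is the missing idea: $\MAC$ is used \emph{first} and purely locally, and no amalgamation is ever performed because the branch is found as an \emph{element} of a countable model, already total on $\lambda$. The key Lemma \ref{lem:ma_tree} produces stationarily many countable $M$ such that every $f \in \lev_{u_M}(\F)$ either equals $b \rst u_M$ for some $b \in {}^\lambda 2 \cap M$, or satisfies $f \rst u \notin M$ for some $u \in [\lambda]^{\omega_1} \cap M$; this is proved by applying $\MAC$ to the \emph{countable} poset $\Pcal \cap N$ of partial runs of a game inside a countable $N$, with at most $\aleph_1$ dense sets because $|\lev_{u_N}(\F)| \leq \aleph_1$ --- that is where thinness actually enters, not as a ccc count for a global poset --- together with Lemma \ref{lem:very_thin_tree}. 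Reflection comes second: for (a), $\sref$ gives \emph{stationarily many} $K \in [H_\theta]^{\omega_1}$ reflecting this set $Z$; a Skolem-hull argument inside each $K$ rules out the second alternative for $f_{K\cap\lambda}$, yielding $b_K \in K$ with $b_K \rst (K\cap\lambda) = f_{K\cap\lambda}$, and Fodor applied to $K \mapsto b_K \in K$ stabilizes a single $b$, which is how two-cardinal stationarity is obtained. For (b), your assertion that the coherent set ``can be arranged to be full'' is unsubstantiated --- if it were, the whole dichotomy apparatus would be unnecessary; what the paper does is reflect the $\sqsubseteq$-upward closure $Z^*$ of $Z$ and then recover $M \in Z$ from $M^* \in Z^*$ via the observation that $M \cap \omega_1 = M^* \cap \omega_1$ and $|\lev_u(\F)| \leq \aleph_1$ imply $\lev_u(\F) \cap M = \lev_u(\F) \cap M^*$. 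That equality is the precise mechanism by which semi-stationary reflection suffices for a bare cofinal branch; elementarity of $M$ then makes the branch $b \in M$ cofinal in $\F$, with no further forcing.
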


Before we proof Theorem \ref{thm:itp_tp}, we introduce some notation.
For an ordinal $\lambda \geq \omega_2$ and a set $M$ let
\[
u^\lambda_M = \bigcup \left( [ \lambda ]^{\omega_1} \cap M \right) \; .
\]
We will omit the superscript $\lambda$ in $u^\lambda_M$ if it is clear from the context.
The following is a key lemma.

\begin{lemma} \label{lem:ma_tree}
Assume $\mrm{MA}_{\omega_1} ( \mrm{Cohen} )$.
Let $\lambda$ be an ordinal $\geq \omega_2$ and $\F$ be a thin
$( \omega_2 , \lambda )$-tree.
Let $\theta$ be a sufficiently large regular cardinal.
Then there are stationary many $M \in [ H_\theta ]^\omega$ such that
for all $f \in \mrm{lev}_{u_M} ( \F )$ exactly one of the following holds:
\begin{enumerate}
\item There exists $b \in {}^\lambda 2 \cap M$ with $b \rst u_M = f$.
\item There exists $u \in [ \lambda ]^{\omega_1} \cap M$ with
$f \rst u \notin M$.
\end{enumerate}
\end{lemma}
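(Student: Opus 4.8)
The plan is to treat the two alternatives as a genuine dichotomy whose only nontrivial half is existence, and then to produce the required models by a single application of $\MAC$. First observe that (1) and (2) are mutually exclusive for \emph{every} $M$: if $b\in{}^\lambda 2\cap M$ satisfies $b\rst u_M=f$ and $u\in[\lambda]^{\omega_1}\cap M$, then $u\subseteq u_M$ by the definition of $u_M$, so $f\rst u=(b\rst u_M)\rst u=b\rst u$; since $b,u\in M$ this gives $f\rst u\in M$, and (2) fails. Hence ``exactly one'' reduces to ``at least one'', and the whole content is to find stationarily many $M$ for which (1) holds whenever (2) fails. Fixing a well-ordering $\unlhd$ of $H_\theta$ and an arbitrary algebra $G:[H_\theta]^{<\omega}\to H_\theta$, it suffices to exhibit a single countable elementary $M\prec\mathfrak A=(H_\theta,\in,\unlhd,\F,\lambda,G)$ that is closed under $G$ and has the stated property; letting $G$ range over all such functions then yields stationarity.

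To build such an $M$ I would first tame the circular dependence of $u_M$ on $M$ by working inside a fixed arena. Choose $N\prec\mathfrak A$ with $|N|=\omega_1$ and $\omega_1\subseteq N$ which is internally approachable of length $\omega_1$, say $N=\bigcup_{\xi<\omega_1}N_\xi$ with each $N_\xi$ countable and elementary and $\langle N_\eta:\eta\le\xi\rangle\in N_{\xi+1}$. The desired $M$ will be read off, via $\MAC$, as a suitably generic countable elementary submodel inside $N$, so that $u_M\subseteq u_N$ and every node that can ever play a role lives in $N$. The decisive structural point is that $\F$ is \emph{thin}: each level $\lev_v(\F)$ with $v\in[\lambda]^{\le\omega_1}$ has size $\le\omega_1$, so the collection of all nodes that can serve as a restriction $f\rst u$, for $u\in[\lambda]^{\omega_1}\cap N$ and $f$ a node over a level of the form $u_{N_\xi}$, has size $\le\omega_1$ and can be enumerated in order type $\omega_1$. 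This is precisely what makes $\MAC$ applicable: there are only $\omega_1$ many requirements to meet.

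The model $M$ is then obtained from a filter given by $\MAC$ on Cohen forcing meeting two families of dense sets. The first family, countable, forces $M$ to be elementary in $\mathfrak A$ and closed under $G$ and under the Skolem functions of $\mathfrak A$. The second family, of size $\le\omega_1$ and indexed by the nodes enumerated above, enforces the dichotomy: for each candidate node $f$ over the eventual level $u_M$ all of whose $\omega_1$-restrictions have been captured by $M$, the associated dense set either drives a full function $b\in{}^\lambda 2$ with $b\rst u_M=f$ into $M$, witnessing (1), or else produces some $u\in[\lambda]^{\omega_1}\cap M$ with $f\rst u\notin M$, witnessing (2). The main obstacle is exactly the density of this second family, i.e.\ the amalgamation step: from a finite condition together with a coherent system of $\omega_1$-approximations already seen by the model, one must extend either to decide a cofinal branch inside the model or to expose an escaping approximation. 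This is where $\MAC$ is used essentially — the Cohen generic either threads a branch or separates the approximations, and $\MAC$ secures a filter satisfying all $\omega_1$ requirements at once — and where thinness of $\F$ is needed again to confine the branch search to a manageable, thin set of nodes. Verifying this density, and checking that the $M$ produced genuinely satisfies $u_M\subseteq u_N$ and the required closure, is the crux; the remaining bookkeeping is routine.
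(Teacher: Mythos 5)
Your high-level architecture matches the paper's: mutual exclusivity of (1) and (2) is exactly as you argue, stationarity reduces to producing one suitable $M$ per algebra $F$, and $\MAC$ is applied to a countable poset with at most $\aleph_1$ dense sets, one for each node of the thin level $\mrm{lev}_{u_N}(\F)$. But the two steps you defer as ``the crux'' are precisely the mathematical content of the lemma, and your sketch of them does not work as stated. First, you give no mechanism for controlling $M \cap \omega_1$. In the paper the poset is the tree of partial runs, inside a \emph{countable} $N \prec H_\mu$, of a game $G_2(\theta,F,\xi)$ in which II follows a winning strategy (Lemma \ref{lem:game2}), so that the generic run yields $M = \clsr_F(\xi \cup \{K_n : n<\omega\})$ with $M \cap \omega_1 = \xi$. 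This is indispensable: it identifies $\mrm{lev}_u(\F) \cap M$ with the initial segment $A(u)$ of length $\xi$ of the $\unlhd$-least enumeration of that level, and hence converts the generic escape condition $f \rst u_n \notin A(u_n)$ into the statement $f \rst u_n \notin M$ required by (2). Your internally approachable $N$ of size $\aleph_1$ plays no analogous role, and without pinning down $M \cap \omega_1$ the dense sets you describe cannot witness (2).

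Second, your mechanism for witnessing (1) --- that the generic ``drives a full function $b \in {}^\lambda 2$ with $b \rst u_M = f$ into $M$'' --- cannot be right: a countable (Cohen-equivalent) poset adds no new functions on $\lambda$, and the $\MAC$ filter lives in $V$; any such $b$ must already exist in the ambient countable model $N$. The paper obtains it by a separate combinatorial argument, not by genericity: if the escape set $E_f$ fails to be dense below some condition $p$, one forms the derived tree $\G$ of nodes all of whose restrictions to levels in a cofinal family $U$ lie in the sets $A(u)$; $\G$ has countable levels, and Lemma \ref{lem:very_thin_tree} (proved by pressing down on internally approachable models of size $\aleph_1$) yields a countable set $\B \in N$ of cofinal branches such that $f$ agrees on $u_N$ with a unique $b_0 \in \B \subseteq N$. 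This is the missing idea in your proposal: density of $E_f$ is established by contraposition, with failure of density itself producing the branch. Finally, even granted $b \in {}^\lambda 2 \cap N$, you must still handle the case $b \in N \setminus M$: the paper does this via genericity for the sets $D_K$ (placing $b$ inside some $K_n \in M$) together with a separation argument --- choosing $u \in [\lambda]^{\leq\omega_1} \cap M$ on which restriction is injective on ${}^\lambda 2 \cap K_n$, so that $b \notin M$ forces $f \rst u = b \rst u \notin M$, again giving (2). None of these three steps is routine bookkeeping, and your proposal as it stands contains none of them.
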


To prove this lemma we further need two lemmas.
First, let $\theta$ be a regular cardinal $\geq \omega_2$.
For a function $F : [ H_\theta ]^{< \omega} \to H_\theta$ and $\xi < \omega_1$,
let $G_2 ( \theta , F , \xi )$ be the following game of length $\omega$:
\[
\begin{array}{c|c|c|c|c|c}
\mrm{I} & J_0 \phantom{K_0} & J_1 \phantom{K_1}& \ \ \cdots \ \ & J_n \phantom{K_n}& \ \ \cdots \ \ \\
\hline
\mrm{II} & \phantom{J_0} K_0 & \phantom{J_1} K_1 & \ \ \cdots \ \ & \phantom{J_n} K_n & \ \ \cdots \ \ \\
\end{array}
\]
In the $n$-th stage first I chooses $J_n \in [ H_\theta ]^{\omega_1}$
and then II chooses $K_n \in [ H_\theta ]^{\omega_1}$ with $K_n \supseteq J_n$.
II wins if and only if
\[
\clsr_F ( \xi \cup \{ K_n : n \in \omega \} ) \cap \omega_1 ~=~ \xi \; .
\]

\begin{lemma} \label{lem:game2}
Let $\theta$ be a regular cardinal $\geq \omega_2$.
Then for any function $F : [ H_\theta ]^{< \omega} \to H_\theta$
there exists $\xi < \omega_1$ such that {\rm II} has a winning strategy
for $G_2 ( \theta , F , \xi )$.
\end{lemma}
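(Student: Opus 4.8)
The plan is to reduce to open determinacy and then argue by contradiction. First I observe that $G_2(\theta,F,\xi)$ is an \emph{open} game for player~I: the event that makes~II lose, namely that some countable ordinal $\geq\xi$ enters $\clsr_F(\xi\cup\{K_n:n\in\omega\})$, is witnessed by finitely many applications of $F$ to $\xi$ together with finitely many of the $K_n$, and hence is decided at a finite stage. By the Gale--Stewart theorem each $G_2(\theta,F,\xi)$ is therefore determined, so it suffices to produce a single $\xi<\omega_1$ for which~I has no winning strategy. I will assume toward a contradiction that for \emph{every} $\xi<\omega_1$ player~I has a winning strategy $\sigma_\xi$, and fix the sequence $\vec\sigma=(\sigma_\xi:\xi<\omega_1)$.

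Next I fix the ambient structure. Let $\chi$ be sufficiently large, $\unlhd$ a well-ordering of $H_\chi$, and $\frak{A}=(H_\chi,\in,\unlhd,F,\theta,\vec\sigma)$. Since $\clsr_F(X)\subseteq\skull^{\frak{A}}(X)$ for every $X$, it is enough to control the Skolem hull: II wins once she forces $\skull^{\frak{A}}(\xi\cup\{K_n:n\in\omega\})\cap\omega_1=\xi$. I then build a continuous $\in$-increasing chain $(M_\alpha:\alpha<\omega_1)$ of countable elementary submodels of $\frak{A}$ with $\vec\sigma,F,\theta\in M_0$ and $(M_\beta:\beta\leq\alpha)\in M_{\alpha+1}$; writing $\delta_\alpha=M_\alpha\cap\omega_1$, the set $C=\{\delta_\alpha:\alpha<\omega_1\}$ is club. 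The reason for putting $\vec\sigma$ into the chain is that, although $\delta_\alpha\notin M_\alpha$, we have $\delta_\alpha\in M_{\alpha+1}$ and hence $\sigma_{\delta_\alpha}\in M_{\alpha+1}$; consequently, in any run of $G_2(\theta,F,\delta_\alpha)$ in which~I follows $\sigma_{\delta_\alpha}$ while~II keeps her moves inside $M_{\alpha+1}$, all of~I's moves $J_n$ lie in $M_{\alpha+1}$ and therefore have countable trace bounded by $\delta_{\alpha+1}$.

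The heart of the argument is to locate a limit $\alpha$ (of cofinality $\omega$) at which~II can defeat $\sigma_{\delta_\alpha}$ by keeping the whole play inside $M_\alpha$. Concretely, I want~II to answer each move $J_n$ of~I by some $K_n\in[H_\theta]^{\omega_1}\cap M_\alpha$ with $J_n\subseteq K_n$. If such a cover is available at every stage, then an easy induction (using $\sigma_{\delta_\alpha}\in M_{\alpha+1}$ and $M_\alpha\prec M_{\alpha+1}$) gives $K_n\in M_\alpha$ for all $n$, so that $\skull^{\frak{A}}(\delta_\alpha\cup\{K_n:n\in\omega\})\subseteq\skull^{\frak{A}}(M_\alpha\cup\delta_\alpha)$. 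Applying Fact~\ref{fact:skull_sup} with $\lambda=\omega_1$ and $M=M_\alpha$, so that $\delta_\alpha=\sup(M_\alpha\cap\omega_1)$, yields $\skull^{\frak{A}}(M_\alpha\cup\delta_\alpha)\cap\omega_1=\delta_\alpha$, whence $\clsr_F(\delta_\alpha\cup\{K_n:n\in\omega\})\cap\omega_1=\delta_\alpha$. Thus~II wins this run, contradicting that $\sigma_{\delta_\alpha}$ is winning for~I; this would finish the proof.

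The main obstacle, where essentially all the work lies, is the covering step: guaranteeing, for a suitably chosen $\alpha$, that every move $J_n$ produced by $\sigma_{\delta_\alpha}$ admits a single cover $K_n\supseteq J_n$ of size $\omega_1$ lying in $M_\alpha$ (equivalently, that $J_n$ is covered by a member of $[H_\theta]^{\omega_1}\cap M_\alpha$, i.e.\ absorbed into $u^\theta_{M_\alpha}$). This is delicate precisely because~I's moves have size $\omega_1$ and no model of size $\leq\omega_1$ can contain covers for all size-$\omega_1$ subsets of $H_\theta$, the poset $([H_\theta]^{\omega_1},\subseteq)$ having cofinality greater than $\omega_1$; so~II cannot precommit to a covering family and the right $\alpha$ must be found by reflection. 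My plan here is to exploit $\sigma_{\delta_\alpha}\in M_{\alpha+1}$ together with the continuity of the chain: one forms the assignment sending $\alpha$ to a witness, inside $M_\alpha$, for the first move of $\sigma_{\delta_\beta}$ (for the $\beta<\alpha$ with $\delta_\beta\in M_\alpha$) that escapes every cover in the model, and applies Fodor's lemma on the club $C$ to stabilize it, the cofinality-$\omega$ choice of $\delta_\alpha$ then allowing one to catch the tail of the run stage by stage. I would keep Lemma~\ref{lem:game1} in reserve at this point, since the same device of passing to objects of cofinality $\omega_1$ is what should prevent the assembled hull from reaching up to $\delta_\alpha$. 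Making this reflection precise is the crux on which the whole proof turns, and I expect it, rather than the determinacy reduction or the hull computation, to require the real effort.
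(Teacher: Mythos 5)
Your determinacy reduction, the contradiction setup with strategies $\sigma_\xi$ for player I, and the closing hull computation all match the paper's proof, but the step you yourself flag as the crux --- covering each move $J_n$ of $\sigma_{\delta_\alpha}$ by some $K_n\in[H_\theta]^{\omega_1}\cap M_\alpha$ --- is a genuine gap, and the Fodor-plus-continuity plan you sketch for it would not close it. The obstruction is that $\sigma_{\delta_\alpha}$ is not an element of \emph{any} $M_\beta$ with $\beta\leq\alpha$ (its index $\delta_\alpha$ is exactly the ordinal these models cannot see), so nothing ties its moves to the chain below $\alpha$: the family $[H_\theta]^{\omega_1}\cap M_\alpha$ is countable, its union $u_{M_\alpha}$ has size $\aleph_1$, and already the opening move $J_0=\sigma_{\delta_\alpha}(\emptyset)$ need not be a subset of $u_{M_\alpha}$, let alone of a single member of $M_\alpha$. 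Fodor's lemma stabilizes one regressive ordinal value along the club, but here the thing to be controlled is the behavior of an entire strategy throughout an infinite run, and that strategy depends on $\delta_\alpha$ itself; choosing $\delta_\alpha$ of cofinality $\omega$ and catching the run ``stage by stage'' does not help, because each tail move is computed by a strategy no proper initial segment of the chain contains. (Lemma \ref{lem:game1} is of no use here either: its cofinality-$\omega_1$ device addresses a different game in which the relevant moves are ordinals.)

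The single missing idea, which is how the paper proceeds, is a diagonal union over \emph{all} parameters at once: fix one countable $M\prec H_\mu$ with $\vec\tau=(\tau_\xi:\xi<\omega_1)\in M$, set $\zeta=M\cap\omega_1$, and have II answer the position $(K_m:m<n)$ with
\[
K_n \;=\; \bigcup_{\xi<\omega_1}\tau_\xi\bigl((K_m:m<n)\bigr)\;.
\]
This is a legal move (a union of $\omega_1$ many sets of size $\omega_1$, and it contains $J_n=\tau_\zeta((K_m:m<n))$ as one term), and --- crucially --- it is definable from $\vec\tau$ and the previous moves uniformly, with no mention of $\zeta$, so by induction each $K_n\in M$ even though $\tau_\zeta\notin M$. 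Since the $K_n$ enter $\clsr_F(\zeta\cup\{K_n:n\in\omega\})$ as \emph{elements} rather than as subsets, that closure is generated from a countable set of members of $M$ and hence is contained in $M$, giving $\clsr_F(\zeta\cup\{K_n:n\in\omega\})\cap\omega_1=\zeta$; so II defeats $\tau_\zeta$ in this one run, a contradiction. This makes your whole apparatus of a continuous chain, Fodor's lemma, and Fact \ref{fact:skull_sup} unnecessary: one countable model suffices, and the covering problem you correctly identified as fatal simply never arises.
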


\begin{proof}
Take an arbitrary function $F : [ H_\theta ]^{< \omega} \to H_\theta$.
Towards contradiction assume that II does not have a winning strategy for
$G_2 ( \theta , F , \xi )$, for any $\xi < \omega_1$.
Since each $G_2 ( \theta , F , \xi )$ is an open-closed game,
I has a winning strategy, say $\tau_\xi$, in $G_2 ( \theta , F , \xi )$, for each $\xi$.
Let $\vec{\tau} = ( \tau_\xi : \xi < \omega_1 )$.
Take a sufficiently large regular cardinal $\mu$
and a countable elementary submodel $M$ of $H_\mu$
containing $\theta,F$ and $\vec{\tau}$. Let $\zeta = M \cap \omega_1$.
By induction on $n$ let
\begin{eqnarray*}
J_n & = & \tau_\zeta ( ( K_m : m < n ) ) \; ,\\
K_n & = & {\textstyle \bigcup_{\xi < \omega_1}} \tau_\xi ( ( K_m : m < n ) ) \;.
\end{eqnarray*}
Then $( J_n , K_n : n \in \omega )$ is a legal play
of $G_2 ( \theta , F , \zeta )$ in which I has moved according to $\tau_\zeta$.
Here note that $K_n \in M$, for each $n$, by the elementarity of $M$.
Moreover $\zeta = M \cap \omega_1$ and $M$ is closed under $F$. Hence
$\clsr_F ( \zeta \cup \{ K_n : n \in \omega \} ) \subseteq M$,
and thus $\clsr_F ( \zeta \cup \{ K_n : n \in \omega \} ) \cap \omega_1 = \zeta$.
Therefore II wins the play $( J_n , K_n : n \in \omega )$ in $G_2(\theta,F,\zeta)$,
which  contradicts the fact that $\tau_\zeta$ is a winning strategy of I.
\end{proof}

The second one is a lemma on very thin $( \omega_2 , \lambda )$-trees.

\begin{lemma} \label{lem:very_thin_tree}
Let $\lambda$ be an ordinal $\geq \omega_2$ and $\mathscr{F}$ be an $( \omega_2 , \lambda )$-tree
such that $\mrm{lev}_u ( \F )$ is countable for every $u \in [ \lambda ]^{\leq \omega_1}$.
Then there is a countable subset $\mathscr{B}$ of  $ {}^\lambda 2$
and a club $C$ in  $[ \lambda ]^{\leq \omega_1}$ such that
for any $u \in C$ and $f \in \mrm{lev}_u ( \F )$ there is a unique $b \in \B$
with $b \rst u = f$.
\end{lemma}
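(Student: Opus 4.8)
The plan is to take the family $\B$ to be the set of \emph{all} cofinal branches of $\F$, to show this set is countable, and then to show that on a suitable club it covers every node uniquely.

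\textbf{Step 1 (the branches are countable).} First I would show that $\F$ has at most countably many cofinal branches. Suppose toward a contradiction that $\langle b_\eta : \eta < \omega_1\rangle$ were distinct cofinal branches. For each pair $\eta<\eta'$ pick $\gamma_{\eta\eta'}<\lambda$ with $b_\eta(\gamma_{\eta\eta'})\neq b_{\eta'}(\gamma_{\eta\eta'})$, and set $u=\{\gamma_{\eta\eta'}:\eta<\eta'<\omega_1\}\in[\lambda]^{\leq\omega_1}$. Then $\eta\mapsto b_\eta\rst u$ is injective into $\mrm{lev}_u(\F)$, forcing $|\mrm{lev}_u(\F)|\geq\omega_1$ and contradicting the very-thinness hypothesis. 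Hence the set $\B$ of cofinal branches is countable; this is the family we output, and it plainly suffices to work with $u$ of size $\omega_1$, so $C$ may be taken inside $[\lambda]^{\omega_1}$.

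\textbf{Step 2 (covering on a club).} The core is to produce a club $C\subseteq[\lambda]^{\omega_1}$ so that for every $u\in C$ and $f\in\mrm{lev}_u(\F)$ there is $b\in\B$ with $b\rst u=f$. Fix such a $u$ with a continuous increasing filtration $u=\bigcup_{i<\omega_1}v_i$ into countable sets, and for $f\in\mrm{lev}_u(\F)$ put $S_i=\{b\in\B: b\rst v_i=f\rst v_i\}$. These are decreasing in $i$, with $\bigcap_{i<\omega_1}S_i=\{b\in\B:b\rst u=f\}$, so it is enough to know each $S_i\neq\emptyset$. The decisive observation is that $\langle S_i:i<\omega_1\rangle$ is a \emph{decreasing} $\omega_1$-sequence of subsets of the \emph{countable} set $\B$: each strict decrease deletes at least one element of $\B$, so the sequence can strictly decrease only countably often and must stabilize at some $i_0<\omega_1$, whence $\bigcap_i S_i=S_{i_0}\neq\emptyset$ and we obtain $b\in\B$ with $b\rst u=f$. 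This mismatch between the length $\omega_1$ of the filtration and the countable size of the levels is precisely where very-thinness does the work.

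\textbf{Step 2$'$ (nonemptiness of $S_i$ — the main obstacle).} What remains, and what I expect to be the hard part, is arranging on a club of $u$ that every countable restriction $f\rst v_i$ already lies on a cofinal branch, i.e. $S_i\neq\emptyset$. Call a node \emph{immortal} if it extends to a node on every larger level. For a countable $v$ the countably many non-immortal nodes of $\mrm{lev}_v(\F)$ each fail to extend past some bounded level, so one can pick a single $\phi(v)\in[\lambda]^{<\omega_2}$ past which exactly the immortal nodes survive; taking $C$ to be the club of $u$ closed under such a $\phi$ forces each $f\rst v_i$ to be immortal. The genuinely delicate point is then to show that an immortal node lies on a cofinal branch, that is, to bridge the potential Aronszajn-type gap at limit levels of cofinality $\omega$, where a coherent thread need not be realized by an actual node. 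I would handle this by running the same stabilization argument on coherent threads and invoking the $(\omega_2,\lambda)$-tree axiom, which furnishes a genuine node at every limit level, to realize the stabilized thread as a branch.

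\textbf{Step 3 (uniqueness).} Finally I would shrink $C$ to secure uniqueness. Writing $\B=\{b_n:n<\omega\}$, for $m\neq n$ fix $\delta_{mn}$ with $b_m(\delta_{mn})\neq b_n(\delta_{mn})$; each $\{u:\delta_{mn}\in u\}$ is club in $[\lambda]^{<\omega_2}$, so their countable intersection $C'$ is club. On $C\cap C'$ distinct branches disagree on $u$, so the $b\in\B$ with $b\rst u=f$ is unique, which completes the proof.
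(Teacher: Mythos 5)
Your Steps 1--3 are fine as far as they go: the counting argument showing $\F$ has only countably many cofinal branches is correct, the stabilization of the decreasing sequence $(S_i)_{i<\omega_1}$ inside the countable set $\B$ is correct, and the uniqueness club in Step 3 is routine. But Step 2$'$, which you rightly identify as the crux, has a genuine gap --- in fact two. First, the club you invoke need not exist: your $\phi$ maps countable sets to sets of size up to $\aleph_1$, and when $\mrm{CH}$ fails a set $u$ of size $\aleph_1$ has more than $\aleph_1$ countable subsets, so the collection of $u \in [\lambda]^{\omega_1}$ closed under $\phi$ is in general not club and can even be empty (a $\phi$ coding countable sets by ordinals admits no closed $u$ of size $\aleph_1$ once $2^\omega > \omega_1$). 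This is precisely why the paper works with the merely \emph{stationary} set of internally approachable models of size $\aleph_1$ and extracts a club only at the very end. Second, and more fundamentally, your bridge from immortality to an actual branch does not work: immortality gives an extension of $g$ to each level \emph{separately}, with no coherence among the extensions, and the tree axiom only says that each level is nonempty --- it does not say that some node at a limit level realizes a given stabilized thread, which is exactly the Aronszajn-type gap you name. Nor can the stabilization trick be rerun on threads: the threads along a filtration $(v_i)_{i<\omega_1}$ of $u$ are the branches of a tree of height $\omega_1$ with countable levels, and such trees can have uncountably many branches (Kurepa trees), so there is no countable set against which to stabilize. Note also that ``every immortal node of a very thin tree lies on a cofinal branch'' is essentially the lemma itself, so the reduction is close to circular.

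What is missing is a mechanism that makes extensions \emph{unique}, hence automatically coherent; this is the paper's key idea. Pressing down over internally approachable models $K$ of size $\aleph_1$ (internal approachability is what lets one choose a separating set $x_K \in K$ for the countable level $\mrm{lev}_{K \cap \lambda}(\F)$), the paper finds a single $x \in [\lambda]^{\leq\omega_1}$ that separates $\mrm{lev}_u(\F)$ for every $u$ in a stationary, hence cofinal, family $U \subseteq [\lambda]^{\leq\omega_1}$. For each $h \in \mrm{lev}_x(\F)$ that extends to every level in $U$, the extension $f^h_u$ is then unique, so the system $(f^h_u : u \in U)$ coheres for free and its union is a total branch $b_h$; countability of $\mrm{lev}_x(\F)$ gives countability of $\B$, and the club $C$ is simply the trace on $[\lambda]^{\omega_1}$ of elementary submodels containing this data, where elementarity (rather than your $\phi$-closure) shows every node on a level in $C$ is covered. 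Your outline, by contrast, never produces any coherence device, and without one the passage from levelwise extendibility to a cofinal branch fails.
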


\begin{proof}
Let $\theta$ be a sufficiently large regular cardinal
and $W_0$ be the set of all elementary submodels $K$ of  $H_\theta$
which have cardinality $\aleph_1$ and are  internally approachable of length $\omega_1$.
Note that $W_0$ is stationary in $[ H_\theta ]^{\leq \omega_1}$.
For each $K \in W_0$ we can take $x_K \in [ K \cap \lambda ]^{\leq \omega_1} \cap K$ such that
$f_0 \rst x_K \neq f_1 \rst x_K$ for any distinct $f_0 , f_1 \in \mrm{lev}_{K \cap \lambda} ( \F )$.
This is because $\mrm{lev}_{K \cap \lambda} ( \F )$ is countable.
By the Pressing Down Lemma, there is $x \in [ \lambda ]^{\leq \omega_1}$ such that
$W_1 = \{ K \in W_0 : x_K = x \}$ is stationary.
Let $U = \{ K \cap \lambda : K \in W_1 \}$.
Note that $U$ is stationary in $[ \lambda ]^{\leq \omega_1}$.
Let $A$ be the set of all $h \in \mrm{lev}_x ( \F )$
such that for every $u \in U$ there is $f \in \mrm{lev}_u ( \F )$ with $f \rst x = h$.
For each $h \in A$ and $u \in U$
let $f^h_u$ be the unique element of $\mrm{lev}_u ( \F )$ with $f^h_u \rst x = h$.
Here note that if $h \in A$, and $u , v \in U$,
then $f^h_u \rst ( u \cap v ) = f^h_v \rst ( u \cap v )$.
For each $h \in A$ let $b_h = \bigcup \{ f^h_u : u \in U \} \in {}^\lambda 2$,
and let $\B = \{ b_h : h \in A \}$. 
Clearly $\B$ is a countable subset of ${}^\lambda 2$.
Note that $b_h \rst u \in \mrm{lev}_u ( \F )$ for all $u \in [ \lambda ]^{\leq \omega_1}$
since $U$ is $\subseteq$-cofinal in $[ \lambda ]^{\leq \omega_1}$.
Let $D$ be the collection of all elementary submodels $K$ of $H_\theta$ which have size
$\aleph_1$ and contain all the relevant objects. Then $D$ is a club in $[H_\theta]^{\omega_1}$
and $C=\{\ K\cap \lambda : K\in D\}$ is a club in $[\lambda]^{\omega_1}$.
We claim that $\B$ and $C$ are as desired.
In order to see this, fix $K\in D$ and $f \in \mrm{lev}_{K \cap \lambda} ( \F )$.
Let $h = f \rst x$.
Then $h \in \mrm{lev}_x ( \F ) \subseteq K$,
and $f \rst u$ witnesses that there is $g \in \mrm{lev}_u ( \F )$ with $g \rst x = h$,
for every $u \in U \cap K$.
So it follows from the elementarity of $K$ that $h \in A$.
Moreover $f \rst u = f^h_u$ for all $u \in U \cap K$,
and $\bigcup ( U \cap K ) = K \cap \lambda$ by the elementarity of $K$.
Therefore $f = \bigcup_{u \in U \cap K} f^h_u = b_h \rst ( K \cap \lambda )$.
\end{proof}


\begin{proof}[Proof of Lemma \ref{lem:ma_tree}]
Take an arbitrary function $F : [ H_\theta ]^{< \omega} \to H_\theta$.
We find a countable elementary submodel  $M$ of  $[ H_\theta ]$ closed under $F$ such that
for any $f \in \mrm{lev}_{u_M} ( \F )$ either (1) or (2) in Lemma \ref{lem:ma_tree} holds.
Let $\unlhd$ be a well-ordering of $H_\theta$.
By changing $F$ if necessary, we may assume that if a subset $M$ of $H_\theta$ is closed under $F$, then
$M$ is an elementary submodel of  $(H_\theta,\in,\unlhd)$ and contains $\lambda$ and $\F$.
By Lemma \ref{lem:game2} let $\xi < \omega_1$ be such that II has a winning strategy, say
$\tau$, for $G_2 ( \theta , F , \xi )$.
Moreover take a sufficiently large regular cardinal $\mu$ and a countable elementary submodel
$N$ of $H_\mu$ containing all the relevant objects.
The desired $M$ will be a subset of $N$ and will be obtained by applying
$\mrm{MA}_{\omega_1} ( \mrm{Cohen} )$ to an appropriate poset.

Let $\Pcal$ be the set of  partial plays of the form
$p = \langle J_0^p,K_0^p,\ldots, J_{n_p-1}^p,K_{n_p-1}^p\rangle$ in the game $G_2(\theta,F,\xi)$
in which II follows his winning strategy $\tau$.
We call the integer $n_p$ the {\em length} of $p$.
Moreover, let $u^p_i= K_i^p \cap \lambda$, for all $i<n_p$.
We order $\Pcal$ by reverse end extension.
We will apply $\mrm{MA}_{\omega_1} ( \mrm{Cohen} )$ to the poset $\Pcal_N = \Pcal \cap N$.
Note that, since $N$ is countable, so is  $\Pcal_N$.

Given $K \in [ H_\theta ]^{\omega_1} \cap N$, let
\[
D_K = \{ p \in \Pcal_N : K \subseteq K_i^p, \mbox{ for some }  i < n_p  \} \; .
\]
Then $D_K$ is dense in $\Pcal_N$, for all such $K$.
Next, for  $u \in [ \lambda ]^{\leq \omega_1}$ let
$( f^u_\zeta : \zeta < \omega_1 )$ be the $\unlhd$-least enumeration
of $\mrm{lev}_u ( \F )$, and let $A(u) = \{ f^u_\zeta : \zeta < \xi \}$.
Note that $( A(u) : u \in [ \lambda ]^{\leq \omega_1} ) \in N$.
Then for each $f \in \mrm{lev}_{u_N} ( \F )$ let
\[
E_f = \{ p \in \Pcal_N :
f \rst  u^p_i \notin A(u^p_i),  \mbox{ for some }  i < n_p  \} .
\]

\begin{claim*}
Suppose $f \in \mrm{lev}_{u_N} ( \F )$ and $E_f$ is not dense in $\Pcal_N$.
Then there is $b \in {}^\lambda 2 \cap N$
such that $b \rst u_N = f$.
\end{claim*}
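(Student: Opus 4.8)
The plan is to distil from the hypothesis a single countable elementary submodel $M\subseteq N$ that "guesses" $f$, and then to locate a cofinal branch inside $N$ agreeing with $f$ on $u_N$. First I would use that $E_f$ is not dense to fix a condition $p_0\in\Pcal_N$ below which no condition lies in $E_f$; thus $f\rst u_i^q\in A(u_i^q)$ for every $q\le p_0$ in $\Pcal_N$ and every $i<n_q$. Since $N$ is countable, there are only countably many dense sets $D_K$ with $K\in[H_\theta]^{\omega_1}\cap N$, so by a Rasiowa--Sikorski recursion I can build a filter $G\subseteq\Pcal_N$ with $p_0\in G$ meeting all of them; the increasing union of the conditions in $G$ is then a complete run $(J_n,K_n:n<\omega)$ of $G_2(\theta,F,\xi)$ in which II has followed the winning strategy $\tau$.

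Next I would set $M=\clsr_F(\xi\cup\{K_n:n<\omega\})$. This is a \emph{countable} set (the generating set is countable and $F$ has finite arity), so by our normalization of $F$ it is an elementary submodel of $(H_\theta,\in,\unlhd)$ containing $\lambda$ and $\F$, and moreover $M\subseteq N$. Because $G$ followed II's winning strategy through a complete run, the winning condition of $G_2(\theta,F,\xi)$ yields $M\cap\omega_1=\xi$. I would then record three consequences: (i) for every $u\in[\lambda]^{\le\omega_1}\cap M$ one has $A(u)=\mrm{lev}_u(\F)\cap M$, since the $\unlhd$-enumeration of $\mrm{lev}_u(\F)$ lies in $M$ and $f^u_\zeta\in M\iff\zeta<\xi$; (ii) $u_M=u_N$, using density of the $D_K$'s for $K\in[\lambda]^{\omega_1}\cap M$ together with $M\subseteq N$; and (iii) $f$ is $M$-approximated, i.e. $f\rst u\in M$ for all $u\in[\lambda]^{\omega_1}\cap M$ — indeed, catching each such $u$ below some $u_i^q$ via $D_u$ and restricting the node $f\rst u_i^q\in A(u_i^q)\subseteq M$ gives $f\rst u\in\mrm{lev}_u(\F)\cap M=A(u)$.

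To produce the branch I would invoke Lemma \ref{lem:very_thin_tree}. Working inside $N$ from the $N$-object $(A(u):u\in[\lambda]^{\le\omega_1})$, define the subtree $\G\subseteq\F$ by letting $\mrm{lev}_u(\G)$ consist of those $g\in\mrm{lev}_u(\F)$ all of whose $\omega_1$-sized restrictions are $\xi$-small, that is $g\rst v\in A(v)$ for every $v\in[u]^{\le\omega_1}$. Then $\G$ is closed under restriction, satisfies $\mrm{lev}_u(\G)\subseteq A(u)$ and hence has countable levels, and belongs to $N$; running the construction of Lemma \ref{lem:very_thin_tree} inside $N$ produces a countable $\B\subseteq{}^\lambda 2$ and a club $C$ with the unique-branch property, with $\B\in N$, so $\B\subseteq N$. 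Picking $u^\ast\in C$ among the levels cofinal in $u_N$ for which $f\rst u^\ast\in\mrm{lev}_{u^\ast}(\G)$ yields $b\in\B\subseteq N$ with $b\rst u^\ast=f\rst u^\ast$; coherence together with $u_M=u_N$ then upgrades this to $b\rst u_N=f$, which is what is wanted.

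The main obstacle is precisely the last step: verifying that $f$ meets $\G$, i.e. that $f\rst u^\ast\in\mrm{lev}_{u^\ast}(\G)$ for cofinally many $u^\ast$. The $M$-approximation in (iii) only directly gives $f\rst v\in A(v)$ for $v\in M$, whereas membership in $\G$ demands $f\rst v\in A(v)$ for \emph{all} $v\in[u^\ast]^{\le\omega_1}$, including those $v\notin M$. Bridging this gap is exactly where $M\cap\omega_1=\xi$ and the fact that the levels $u_i^q$ come from a run of $G_2(\theta,F,\xi)$ following II's winning strategy must be exploited, to guarantee that no $\omega_1$-restriction of $f$ escapes the $\xi$-initial segment of its level. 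I expect this to be the delicate point, and the exact formulation of $\G$ (or a suitable club-sized restriction of it) may have to be adjusted so that Lemma \ref{lem:very_thin_tree} applies and $f$ provably threads it.
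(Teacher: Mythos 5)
Your skeleton matches the paper's up to the last step, but the point you flag yourself as ``the delicate point'' is a genuine gap, and your formulation of $\G$ cannot be repaired by the machinery you set up. Requiring $g \rst v \in A(v)$ for \emph{every} $v \in [\dom(g)]^{\leq \omega_1}$ is too strong: the only levels at which the non-density of $E_f$ gives you any control are those reachable by extending $p_0$ inside $\Pcal_N$, and your route through the filter $G$ and the model $M$ yields $f \rst v \in A(v)$ only for $v \in M$. That is strictly weaker than what your $\G$ demands, and in fact there is no reason for $f \rst v \in A(v)$ to hold for arbitrary $v$: $A(v)$ is just the first $\xi$ elements in the fixed $\unlhd$-enumeration of $\mrm{lev}_v(\F)$, and the index of $f \rst v$ for $v \notin M$ need not be below $\xi$. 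Note also that the elementarity argument needed to thread $f$ must be run in $N$ (where $\xi$, $( A(u) : u )$ and $\G$ live), not in $M$ --- indeed $\xi = M \cap \omega_1 \notin M$, so $\G \notin M$ --- and running it in $N$ requires knowing $f \rst v \in A(v)$ for all relevant $v \in N$, which you do not have. For the same reason even the nonemptiness of the levels of your $\G$ is unclear. Incidentally, constructing $G$ and $M$ is not part of the paper's proof of this claim at all; that construction belongs to the later portion of the proof of Lemma \ref{lem:ma_tree}, where the claim's conclusion is used to decide which $E_f$ go into $\Ecal$.

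The missing idea is to relativize \emph{both} clauses of $\G$ to a cofinal family of levels on which control is actually available. From the condition $p$ with no extension in $E_f$, the paper defines $W$ as the set of $K \in [H_\theta]^{\omega_1}$ such that $p \,\widehat{\ }\, \langle J , K \rangle \in \Pcal$ for some $J$ (one-step extensions following II's strategy $\tau$), and $U = \{ K \cap \lambda : K \in W \}$; then $W, U \in N$, $U$ is $\subseteq$-cofinal, and the choice of $p$ gives exactly $f \rst u \in A(u)$ for all $u \in U \cap N$ (extend $p$ inside $N$ by one round reaching $K$; the resulting condition is not in $E_f$). One then puts $g \in \G$ iff $g \rst u \in A(u)$ for all $u \in U$ with $u \subseteq \dom(g)$, \emph{and} $g$ extends into $A(u)$ for every $u \in U$ with $\dom(g) \subseteq u$. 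Since $A(u) \subseteq N$ for $u \in U \cap N$, every restriction $f \rst v$ with $v \in [\lambda]^{\leq \omega_1} \cap N$ is an \emph{element} of $N$, and now elementarity of $N$ verifies $f \rst v \in \G$: the quantifier over $u \in U$ can be checked on $U \cap N$, where it holds. This simultaneously gives that all levels of $\G$ are nonempty and countable, so Lemma \ref{lem:very_thin_tree} applies, and your concluding coherence argument (an increasing chain $(u_n)$ in $C \cap N$ with union $u_N$, plus uniqueness in $\B$, rather than a single $u^\ast$) then goes through as you sketched.
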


\noindent
\begin{proof} Let  $f\in \mrm{lev}_{u_N} ( \F )$ be such that $E_f$ is not dense in $\Pcal_N$.
We find $b\in {}^\lambda 2 \cap N$ such that $b\rst u_N=f$.
Fix $p\in \Pcal_N$ which has no extensions in $E_f$.
Let
\[
W =  \{ K \in [ H_\theta ]^{\omega_1} : p \; \widehat \;  \langle J,K \rangle  \in \Pcal, \mbox{ for some } J \},
\]
\noindent and let $U  =  \{ K \cap \lambda : K \in W \}$.
Note that $W , U \in N$ and
that $W$ and $U$ are $\subseteq$-cofinal in $[ H_\theta ]^{\omega_1}$ and
$[ \lambda ]^{\omega_1}$, respectively.
Note also that, by the choice of $p$, $f\rst u\in A(u)$, for all $u \in U \cap N$.
Let $\G$ be the set of all $g \in \F$ such that
\begin{enumerate}
\item[(i)] $g \rst u \in A(u)$, for all $u \in U$ with $u \subseteq \dom (g)$,
\item[(ii)]  for any $u \in U$ with $\dom (g) \subseteq u$ there exists
$h\in A(u)$ with $g \subseteq h$.
\end{enumerate}
Since all the parameters in the definition of $\G$ are in $N$ and $N$
is elementary in $H_\mu$, it follows that $\G \in N$.
Note also that $f \rst v \in \G$ for all $v \in [ \lambda ]^{\leq \omega_1} \cap N$
by the fact that $f \rst u \in A_u$ for all $u \in U \cap N$ and the elementarity of $N$.
It follows that $\mrm{lev}_u ( \G )$ is nonempty, for all $u \in [ \lambda ]^{\leq \omega_1}$,
again by the elementarity of $N$. Clearly $\G$ is closed under restrictions.
So $\G$ is an $( \omega_2 , \lambda )$-tree.
Moreover, all the levels of  $\mrm{lev}_u ( \G )$ are countable.
Let $\B \subseteq {}^\lambda 2$ and $C \subseteq [ \lambda ]^{\leq \omega_1}$
be those obtained by applying Lemma \ref{lem:very_thin_tree} for $\G$.
We may assume that $\B , C \in N$ by the elementarity of $N$.
Take an $\subseteq$-increasing sequence $(u_n)_n$  of elements of $C\cap N$
such that  that $\bigcup \{ u_n : n<\omega \}  = u_N$.
Moreover, for each $n$, let $b_n$ be the unique element of $\B$
with $b_n \rst u_n  = f \rst u_n $.
Note that $b_m = b_n$ for each $m , n$ by the uniqueness.
Therefore $b_0 \rst u_N = f$.
Moreover $b_0 \in N$, since $\B \in N$ and $\B$ is countable. Therefore $b_0$ is as desired.
\end{proof}

Now, let $\Dcal$ be the set of the $D_K$, for $K \in [ H_\theta ]^{\omega_1} \cap N$,
and $\Ecal$ the set of the $E_f$, for $f \in \mrm{lev}_{u_N} ( \F )$ such that
there is no $b\in {}^\lambda 2 \cap N$ with $b\rst u_N=f$.
Then $\Dcal$ and $\Ecal$ are dense subsets of $\Pcal_N$. Moreover, $\Dcal$ is countable
and $\Ecal$ has cardinality at most $\aleph_1$, since the cardinality of $ \mrm{lev}_{u_N} ( \F )$
is at most $\aleph_1$.
By $\mrm{MA}_{\omega_1} ( \mrm{Cohen} )$ we can find a filter $G$ in $\Pcal_N$ which meets
all the sets of $\Dcal \cup \Ecal$.
Let $r_G = \bigcup G$. Then $r_G$ is an infinite run of the game $G_2(\theta,F,\xi)$ in which II follows $\tau$
and therefore wins. Let us say $r_G=\langle J_0,K_0,J_1,K_1,\ldots \rangle$ and
let $u_n=K_n\cap \lambda$, for all $n$.  Let
\[
M = \clsr_F ( \xi \cup \{ K_n : n  \in \omega \} ).
\]
We show that this $M$ is as desired.
Prior to this, note that
$M$ is an elementary submodel of  $(H_\theta , \in , \unlhd)$ and contains
$\lambda$ and $\F$. Moreover,  $M \cap \omega_1 = \xi$, since II wins the play $r_G$.
Moreover, $u_M = u_N$ since $G$ meets all the dense sets in $\Dcal$.

Now,  $f \in \mrm{lev}_{u_M} ( \F )$ and suppose first that (1) fails for $f$,
i.e. there is no $b\in {}^\lambda 2\cap M$ such that $b\restriction u_M=f$.
Then $G \cap E_f \neq \emptyset$. Let $n$ be such that
$f \rst  u_n  \notin A(u_n)$.
Note that $A(u_n) = \mrm{lev}_{u_n} ( \F ) \cap M$,
since $u_n=K_n \cap \lambda \in M$  and $M \cap \omega_1 = \xi$.
So $f \rst u_n \notin M$.
Thus $u_n$ witnesses (2) for $f$.
Next suppose that there exists $b \in {}^\lambda 2 \cap N$ such that $b \rst u_M = f$.
Then we can find an integer $n$ such that $b \in K_n$, by the $\Dcal$-genericity of $G$.
Since $K_n \in M \cap [ H_\theta ]^{\omega_1}$,
we can find $u \in [ \lambda ]^{\leq \omega_1} \cap M$ such that
$c \rst u \neq d \rst u$ for any distinct $c , d \in {}^\lambda 2 \cap K_n$.
Then $c \mapsto c \rst u$ is an injection from ${}^\lambda 2 \cap K_n$ to
${}^u 2$, and this injection belongs to $M$.
Hence $c \in M$ if and only if $c \rst u \in M$, for any
$c \in {}^\lambda 2 \cap K_n$.
Here note that $b \notin M$ by our assumption that (1) fails for $f$.
Therefore $f \rst u = b \rst u \notin M$.
\end{proof}

Using Lemma \ref{lem:ma_tree} we prove Theorem \ref{thm:itp_tp}.

\begin{proof}[Proof of Theorem \ref{thm:itp_tp}]
(a) Assume $\sref$ and $\mrm{MA}_{\omega_1} ( \mrm{Cohen} )$.
Let $\lambda$ be an ordinal $\geq \omega_2$, $\F$ be a thin $( \omega_2 , \lambda )$-tree
and $\vec{f} = ( f_u : u \in [ \lambda ]^{\leq \omega_1} )$
be a level sequence of $\F$.
We will find an ineffable branch for $\vec{f}$.
Let $\theta$ be a sufficiently large regular cardinal,
let $\unlhd$ be a well-ordering of $H_\theta$,
and let $\frak A= ( H_\theta , \in , \unlhd)$.
Moreover let $Z$ be the set of all countable elementary submodels $M$
of $\frak A$ containing $\lambda$ and $\F$
and such that either (1) or (2) of Lemma \ref{lem:ma_tree} holds,
for any $f \in \mrm{lev}_{u_M} ( \F )$.
Then $Z$ is stationary in $[ H_\theta ]^\omega$ by Lemma \ref{lem:ma_tree}.
Let $W$ be the set of all $K \in [ H_\theta ]^{\omega_1}$ such that
$\omega_1 \subseteq K$ and  $Z \cap [K]^\omega$ is stationary in $[K]^\omega$.
By $\sref$ it follows that $W$ is stationary in $[ H_\theta ]^{\leq \omega_1}$.

\begin{claim*}
For any $K \in W$ there is $b_K \in {}^\lambda 2 \cap K$
such that $b_K \rst ( K \cap \lambda ) = f_{K \cap \lambda}$.
\end{claim*}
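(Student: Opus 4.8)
The plan is to fix $K \in W$, put $g = f_{K \cap \lambda}$, and manufacture a single branch $b_K \in {}^\lambda 2 \cap K$ with $b_K \rst (K \cap \lambda) = g$ by running two pressing‑down arguments over the countable models $M \in Z \cap [K]^\omega$ that $K \in W$ makes available. Note first that $g \in \mrm{lev}_{K \cap \lambda}(\F)$, so $g \rst u \in \mrm{lev}_u(\F)$ for every $u \subseteq K \cap \lambda$.

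Before the main argument I would record some closure facts. Since every $M \in Z$ is an elementary submodel of $\frak{A}$ containing $\lambda$ and $\F$, and $Z \cap [K]^\omega$ is stationary, hence $\subseteq$-cofinal, in $[K]^\omega$, a Tarski--Vaught argument shows that $K$ is itself an elementary submodel of $\frak{A}$ with $\lambda , \F \in K$. Together with $\omega_1 \subseteq K$ this gives: for each $u \in [\lambda]^{\omega_1} \cap K$ we have $u \subseteq K$, and as $\F$ is thin the set $\mrm{lev}_u(\F)$ is an element of $K$ of size $\leq \omega_1$, whence $\mrm{lev}_u(\F) \subseteq K$; in particular $g \rst u \in K$ for every such $u$. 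Moreover, if $M \in Z \cap [K]^\omega$ then $u_M \subseteq K \cap \lambda$, so $g \rst u_M \in \mrm{lev}_{u_M}(\F)$ and the dichotomy of Lemma \ref{lem:ma_tree} applies to it; here condition (2) reads: there is $u \in [\lambda]^{\omega_1} \cap M$ (so $u \subseteq u_M$) with $g \rst u \notin M$.

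The crux is to show that the set $S_2$ of those $M \in Z \cap [K]^\omega$ satisfying (2) is nonstationary. Suppose it were stationary. For each $M \in S_2$ choose a witness $u_M^* \in [\lambda]^{\omega_1} \cap M$ with $g \rst u_M^* \notin M$; since $u_M^* \in M$, the assignment $M \mapsto u_M^*$ is regressive, so by the pressing‑down lemma in $[K]^\omega$ it is constant, say with value $u^*$, on a stationary $S' \subseteq S_2$. But $g \rst u^* \in K$ is then a fixed element of $K$, so $\{ M \in [K]^\omega : g \rst u^* \in M \}$ is club and must meet $S'$, contradicting $g \rst u^* \notin M$ for $M \in S'$. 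Hence $S_2$ is nonstationary, and because $Z \cap [K]^\omega$ is stationary and the dichotomy is exclusive, the set $S_1$ of $M$ satisfying (1) is stationary in $[K]^\omega$.

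Finally I would assemble the branch. For each $M \in S_1$ fix $b_M \in {}^\lambda 2 \cap M$ with $b_M \rst u_M = g \rst u_M$; as $b_M \in M$, pressing down once more gives a fixed $b_K \in {}^\lambda 2 \cap K$ and a stationary $T \subseteq S_1$ with $b_M = b_K$ for all $M \in T$. To upgrade $b_K \rst u_M = g \rst u_M$ $(M \in T)$ to $b_K \rst (K \cap \lambda) = g$, fix $\alpha \in K \cap \lambda$; by elementarity of $K$ there is $u_\alpha \in [\lambda]^{\omega_1} \cap K$ with $\alpha \in u_\alpha$, the set $\{ M : u_\alpha \in M \}$ is club, so some $M \in T$ has $u_\alpha \in M$, giving $\alpha \in u_\alpha \subseteq u_M$ and thus $b_K(\alpha) = g(\alpha)$. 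This $b_K$ witnesses the claim. I expect the only real obstacle to be the nonstationarity of $S_2$; the remaining steps are routine manipulations with the club filter on $[K]^\omega$ and two applications of the pressing‑down lemma.
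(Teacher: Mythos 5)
Your proof is correct, and it takes a genuinely different route from the paper's. The paper handles each $K \in W$ with a single well-chosen model rather than with pressing-down arguments: writing $f = f_{K \cap \lambda}$, it uses the stationarity of $Z \cap [K]^\omega$ to pick one $M \in Z \cap [K]^\omega$ satisfying the condensation property $\skull^{\frak A} ( M \cup \{ f \} ) \cap K = M$, where $N = \skull^{\frak A} ( M \cup \{ f \} )$. Then for every $u \in [ \lambda ]^{\leq \omega_1} \cap M$ one has $f \rst u \in N \cap K = M$ (using, as you do, that $\mrm{lev}_u ( \F ) \subseteq K$ for $u \in K$), so alternative (2) of Lemma \ref{lem:ma_tree} fails outright for this one $M$, and (1) gives $b \in {}^\lambda 2 \cap M \subseteq K$ with $b \rst u_M = f \rst u_M$; finally, instead of your covering argument, the paper upgrades this to $b \rst ( K \cap \lambda ) = f$ in one stroke by elementarity of $N$: both functions lie in $N$, have domain $K \cap \lambda \in N$, and agree on $N \cap K \cap \lambda = M \cap \lambda \subseteq u_M$, hence are equal. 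Your version replaces the condensation trick by two applications of Fodor's lemma on $[K]^\omega$ --- one fixing a witness $u^*$ to kill alternative (2) on a stationary set (the key point, correctly justified, being that $g \rst u^* \in \mrm{lev}_{u^*} ( \F ) \subseteq K$, so club-many $M$ contain it), and one stabilizing $b_M$ to a single $b_K$ --- followed by a club argument covering $K \cap \lambda$ by the sets $u_M$. What each buys: the paper's argument is shorter once the right $M$ is produced and needs only one model, at the price of the Skolem-hull condensation device; yours stays entirely within the standard club/stationary calculus on $[K]^\omega$, exploits the full stationarity of $Z \cap [K]^\omega$, and in fact mirrors the pressing-down step the paper itself performs immediately afterwards over $K \in W$ to extract the ineffable branch. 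All the auxiliary facts you invoke ($K \prec \frak A$ via Tarski--Vaught, $u \subseteq K$ and $\mrm{lev}_u ( \F ) \subseteq K$ for $u \in [ \lambda ]^{\omega_1} \cap K$, and the pressing-down lemma for $[K]^\omega$) are correctly checked, so there is no gap.
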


\noindent
\begin{proof} Fix $K \in W$, and let $f = f_{K \cap \lambda}$.
Note that $K$ is an elementary submodel of  $\frak A$ since $Z \cap [K]^\omega$ is stationary.
Then $\mrm{lev}_u ( \F )$ is a  subset of $K$, for all $u \in [ \lambda ]^{\leq \omega_1} \cap K$,
since $\mrm{lev}_u ( \F )$ it is an element of $K$ of size $\aleph_1$ and $\omega_1\subseteq K$.
Since $Z \cap [K]^\omega$ is stationary in $[K]^\omega$, we can find $M \in Z \cap [K]^\omega$,
such that, letting $N=\skull^{\frak A} ( M \cup \{ f \} )$, we have that $N\cap K =M$.
Note that $f \rst u \in M$, for all $u \in [ \lambda ]^{\leq \omega_1} \cap M$,
since, $f\rst u \in K\cap N$, for every such $u$.
So (2) of Lemma \ref{lem:ma_tree} fails for $f \rst u_M$ and $M$.
Hence (1) holds for $f \rst u_M$ and $M$,
that is, there is $b \in {}^\lambda 2 \cap M$ with $b \rst u_M = f \rst u_M$.
Note that $b \in K$. Hence it suffices to show that $b \rst ( K \cap \lambda ) = f$.
Note that both $b \rst ( K \cap \lambda )$ and $f$ are functions on $K \cap \lambda$
which are in $N$.
Moreover $b \rst ( K \cap \lambda )$ and $f$ coincides on
$N \cap ( K \cap \lambda )$ since $N \cap ( K \cap \lambda ) = M \cap \lambda$,
and $b \rst u_M = f \rst u_M$.
Hence $b \rst ( K \cap \lambda ) = f$, by the elementarity of $N$.
\end{proof}

\medskip

By the Pressing Down Lemma we can find $b \in {}^\lambda 2$ such that
$b_K=b$, for stationary many $K \in W$.
It follows that $b$ is an ineffable branch for $\vec{f}$.

\bigskip

\noindent
(b) Assume $\ssref$ and $\mrm{MA}_{\omega_1} ( \mbox{Cohen} )$.
Let $\lambda$ be an ordinal $\geq \omega_2$
and $\F$ be a thin $( \omega_2 , \lambda )$-tree.
We will find a cofinal branch for $\F$.
Let $\theta$, $\unlhd$, $\frak A$ and $Z$ be as in the proof of (a).
Moreover let $Z^*$ be the upward closure of $Z$ under $\sqsubseteq$.
By $\ssref$ there is $K \in [ H_\theta ]^{\omega_1}$
such that $\omega_1 \subseteq K$ and $Z^* \cap [K]^\omega$ is stationary in $[K]^\omega$.
Here note that $Z^* \cap [K^*]^\omega$ is stationary, for any $K^* \supseteq K$.
Hence, by replacing $K$ with $\skull^{\frak A} (K)$ if necessary,
we may assume that $K$ is an elementary submodel of  $\frak A$ and
contains $\lambda$ and $\F$ as elements.
Pick any $f \in \mrm{lev}_{K \cap \lambda} ( \F )$.
Then $f \rst u \in K$, for all $u \in [ \lambda ]^{\leq \omega_1} \cap K$.
So we can take $M^*\in Z^* \cap [K]^\omega$ which is an elementary submodel of
$\frak A$, contains $\lambda$ and $\F$ as elements,  and such that $f \rst u \in M^*$,
for all $u \in [ \lambda ]^{\leq \omega_1} \cap M^*$.
Let $M \in Z$ be such that $M \sqsubseteq M^*$.
Here note that $\mrm{lev}_u ( \F ) \cap M = \mrm{lev}_u ( \F ) \cap M^*$,
for any $u \in [ \lambda ]^{\leq \omega_1} \cap M$, since both
$M$ and $M^*$ are elementary submodels of $\frak A$,
$\mrm{lev}_u ( \F )$ is of size $\aleph_1$, and  $M \cap \omega_1 = M^* \cap \omega_1$.
Hence $f \rst u \in M$, for all $u \in [ \lambda ]^{\leq \omega_1} \cap M$,
and so (2) of Lemma \ref{lem:ma_tree} fails for $f \rst u_M$ and $M$.
Thus there is $b \in {}^\lambda 2 \cap M$ with $b \rst u_M = f \rst u_M$.
Then $b \rst u \in \mrm{lev}_u ( \F )$, for all $u \in [ \lambda ]^{\leq \omega_1} \cap M$.
So $b \rst u \in \mrm{lev}_u ( \F )$, for all $u \in [ \lambda ]^{\leq \omega_1}$,
by the elementarity of $M$. So, $b$ is a cofinal branch of $\F$, as required.
\end{proof}

We now show that $\ssref$ and $\mrm{MA}_{\omega_1} ( \mrm{Cohen} )$
is not sufficient to imply the super tree property for $\omega_2$.

\begin{theorem} \label{thm:ssr_itp}
Assume that there exists a strongly compact cardinal.
Then there exists a forcing extension in which $\ssref$ and $\mrm{MA}_{\omega_1} ( \mrm{Cohen} )$
hold but $\omega_2$ does not have the super tree property.
\end{theorem}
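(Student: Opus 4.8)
The plan is to start from a model in which $\kappa$ is strongly compact but \emph{not} supercompact, to fix therein a witness to the failure of $\ITP$, and then to force $\ssref + \MAC$ by collapsing $\kappa$ to $\omega_2$ in a way that preserves this witness. Given a strongly compact cardinal, I would first pass (after a preliminary forcing killing supercompactness while retaining strong compactness, if necessary) to a model $W$ in which $\kappa$ is strongly compact but not supercompact. By Magidor's characterization \cite{Mag}, since $\kappa$ is inaccessible but not supercompact, there is $\lambda_0 \ge \kappa$ for which $\ITP(\kappa,\lambda_0)$ fails; fix a thin $(\kappa,\lambda_0)$-tree $\F$ and a level sequence $\vec f$ of $\F$ in $W$ admitting no ineffable branch. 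By Jech's characterization \cite{Jec}, $\kappa$ still has the strong tree property in $W$, so $\F$ does have cofinal branches --- the failure is only at the ineffable level, and that is exactly the feature I want to preserve.

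Next, over $W$ I would perform the standard revised countable support iteration $\Pcal$ of length $\kappa$, using the strong compactness of $\kappa$ (via a Laver-type guessing function) to force $\ssref$; this is the semiproper analogue of the Foreman--Magidor--Shelah iteration for $\MM$, and by Shelah's equivalence \cite{Sh_P} forcing the principle $(\dagger)$ is the same as forcing $\ssref$. The usual bookkeeping simultaneously guarantees $\MAC$, since Cohen posets are semiproper and hence are among the posets handled by the iteration. In the extension $V = W[G]$ we then have $\kappa = \omega_2$, $\ssref$ and $\MAC$; in particular, by Theorem \ref{thm:itp_tp}(b), $\omega_2$ has the strong tree property in $V$, so $\F$ has cofinal branches in $V$ as well.

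It remains to show that $\ITP(\omega_2,\lambda_0)$ fails in $V$. The key structural fact is that $\Pcal$ is $\kappa$-cc and has the $\kappa$-approximation property. Using thinness, for every $z \in ([\lambda_0]^{<\kappa})^{W}$ the level $\lev_z(\F)$ has size $<\kappa$ and hence is contained in $W$; thus any cofinal branch $b$ of $\F$ in $V$ satisfies $b\rst z \in \lev_z(\F)\subseteq W$ for all such $z$, and the approximation property forces $b \in W$. Consequently no new cofinal branch of $\F$ is added, and any candidate ineffable branch for the appropriately completed level sequence in $V$ already lies in $W$. I would then argue that if such a $b \in W$ were ineffable in $V$, i.e.\ the agreement set $\{u : b\rst u = f_u\}$ were stationary in $([\lambda_0]^{\le\omega_1})^V$, then, reflecting this stationarity back through the $\kappa$-cc cover of small sets, $b$ would already be ineffable for $\vec f$ in $W$, contradicting the choice of $(\F,\vec f)$.

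The main obstacle is exactly this last reflection step. When $\kappa$ is collapsed to $\omega_2$ the index set $[\lambda_0]^{<\kappa}$ and its club filter change completely (from sets of size $<\kappa$ with $\kappa$ inaccessible, to sets of size $\le\omega_1$), so the stationarity of the agreement set in $V$ does not transfer down to $W$ by any soft argument. This is precisely the point at which Viale--Weiss \cite{VW} invoke \emph{properness} to preserve the super tree property; since the $\ssref$-iteration is only semiproper, their theorem is unavailable, and I must instead give a direct preservation argument tailored to the specific witness $(\F,\vec f)$ --- for instance by choosing $\vec f$ to be maximally non-reflecting and tracking agreements along a $\kappa$-approximated club. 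Pushing this through, the failure of $\ITP(\omega_2,\lambda_0)$ survives in $V$, and the contrast with the proper, supercompact-like case explains why $\ssref + \MAC$, unlike $\sref + \MAC$, does not yield the super tree property at $\omega_2$.
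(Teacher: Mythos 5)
Your proposal does not go through as written, and the gap is exactly where you locate it yourself. The downward transfer of ineffability --- from stationarity of the agreement set $\{u : b\rst u = f_u\}$ in $([\lambda_0]^{\le\omega_1})^V$ to stationarity in $([\lambda_0]^{<\kappa})^W$ --- is never proved; ``choosing $\vec f$ to be maximally non-reflecting and tracking agreements along a $\kappa$-approximated club'' is a gesture, not an argument, and ``pushing this through'' is precisely the hard content of the Viale--Weiss theorem. The $\kappa$-cc and $\kappa$-approximation properties do give you that no new cofinal branches of $\F$ appear, but that is the easy half; the stationary-reflection half is where properness is actually used in \cite{VW}, and you concede you cannot invoke it. So the failure of $\ITP(\omega_2,\lambda_0)$ in $V$ is asserted, not established. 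There is also a smaller soft spot: your appeal to ``a Laver-type guessing function'' for a strongly compact cardinal is itself problematic (Laver functions are a supercompactness device), though it turns out to be unnecessary, as explained next.

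The wrong turn happens earlier, when you assume that forcing $\ssref$ requires a bookkeeping iteration of \emph{all} semiproper posets, which makes your iteration merely semiproper and, as you correctly observe, puts the Viale--Weiss theorem out of reach. But Shelah's theorem (Fact \ref{fact:strongly_compact_ssr}) says that if $\kappa$ is strongly compact, then $\ssref$ holds after \emph{any} RCS iteration of semiproper posets of size $<\kappa$ that makes $\kappa = \omega_2$ --- no guessing function, no bookkeeping, no anticipation of names. The paper exploits this by iterating nothing but Cohen forcing with countable support: this is an iteration of \emph{proper} posets (on which RCS and CS coincide), it collapses $\kappa$ to $\omega_2$, it yields $\MAC$ because any $\omega_1$-sized family of dense subsets of Cohen forcing is captured at a bounded stage by the $\kappa$-cc, and Shelah's theorem yields $\ssref$ outright. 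Then Fact \ref{fact:itp_supercompact} (Viale--Weiss) applies verbatim, in contrapositive: if $\omega_2$ had the super tree property in the extension, $\kappa$ would be supercompact in the ground model, contradicting the Magidor identity-crisis preparation (your first step, which matches the paper's). Note that no specific witness $(\F,\vec f)$ is fixed or preserved at all --- the supercompactness characterization does the work globally, which is what spares the paper the bespoke preservation argument that your proposal leaves open; to salvage your route you would in effect have to reprove the Viale--Weiss theorem for your semiproper iteration.
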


Theorem \ref{thm:ssr_itp} follows easily from the following facts.

\begin{fact}[Magidor] \label{fact:identity_crisis}
Assume that $\kappa$ is a supercompact cardinal.
Then there is a forcing extension in which $\kappa$ is strongly compact
but not supercompact.
\end{fact}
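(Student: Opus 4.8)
The plan is to destroy the supercompactness of $\kappa$ while sparing its strong compactness by adjoining a single non-reflecting stationary set at $\kappa^+$. Let $\mathbb{P}$ be the standard forcing for this: a condition is a bounded subset $p \subseteq E^{\kappa^+}_\omega$ such that $p \cap \beta$ is non-stationary in $\beta$ for every limit $\beta \leq \bsup(p)$ with $\cof(\beta) > \omega$, ordered by end-extension, i.e.\ $q \leq p$ iff $q \cap \bsup(p) = p$. First I would record the basic properties of $\mathbb{P}$: it is $\kappa$-strategically closed (the non-reflection demand at limits of uncountable cofinality is precisely what forces strategic rather than genuine closure), so it adds no new bounded subsets of $\kappa^+$ and preserves all cardinals $\leq \kappa^+$; and a routine density argument shows that $S = \bigcup G$ is a stationary subset of $E^{\kappa^+}_\omega$ which is \emph{non-reflecting}, that is, $S \cap \beta$ is non-stationary in $\beta$ for every $\beta < \kappa^+$ with $\cof(\beta) > \omega$.

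The failure of supercompactness in $V[G]$ is then immediate from a reflection theorem. I would use the folklore fact that if $\kappa$ is $\kappa^+$-supercompact then every stationary subset of $E^{\kappa^+}_\omega$ reflects: for $j : V \to M$ witnessing $\kappa^+$-supercompactness one has $j[\kappa^{+}] := \{ j(\alpha) : \alpha < \kappa^{+}\} \in M$, the ordinal $\gamma = \sup j[\kappa^{+}]$ has $\cof^{M}(\gamma) = \kappa^{+}$, and since $j$ is continuous at points of cofinality $\omega$ the image of a stationary $S$ is stationary in $\gamma$, whence $S$ reflects below $\kappa^+$ by elementarity. Since $V[G]$ contains the non-reflecting $S$, this shows $\kappa$ is not $\kappa^+$-supercompact, hence not supercompact, in $V[G]$.

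The real work is preserving strong compactness. For each $\lambda \geq \kappa^+$ I would fix $j : V \to M$ witnessing $\lambda$-strong compactness, so $\mathrm{crit}(j) = \kappa$, $M$ is closed under $<\!\kappa$-sequences, and there is a covering set $A \in M$ with $j[\lambda] \subseteq A$ and $|A|^{M} < j(\kappa)$; one may take $M = \{\, j(f)(A) : f \in V \,\}$. The goal is to lift $j$ to $j : V[G] \to M[H]$ by building an $M$-generic $H$ for $j(\mathbb{P})$ with $j[G] \subseteq H$; the restriction of the lifted embedding through the covering set $A$ then yields a fine measure on $(P_\kappa\lambda)^{V[G]}$, giving $\lambda$-strong compactness in $V[G]$. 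The candidate master condition is $q = \bigcup j[G]$. For $\beta < \gamma := \sup j[\kappa^{+}]$ the end-extension structure gives $q \cap \beta = j(p) \cap \beta$ for a suitable $p \in G$, which is non-stationary in $\beta$ because $j(p)$ is a condition in $M$; so $q$ is non-reflecting strictly below $\gamma$. Granting that $q$ is a condition, the generic $H$ below $q$ is then produced by the usual diagonalization against the dense subsets of $j(\mathbb{P})$ lying in $M$, exploiting the high strategic closure of $j(\mathbb{P})$ in $M$ together with the fact that $M$, being generated by $A$, sees comparatively few such dense sets; here one takes $\lambda$ large enough at the outset that the relevant count stays below $j(\kappa)$.

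The hard part will be the one step I have left open: verifying that $q = \bigcup j[G]$ is a genuine condition of $j(\mathbb{P})$ as computed in $M$, i.e.\ that $q$ is non-stationary in $\gamma$ itself, where $\cof^{M}(\gamma) \geq \kappa^{+}$ is uncountable (note $\cof^{M}(\gamma) \geq \cof^{V}(\gamma) = \kappa^{+}$ since $M \subseteq V$). This is exactly where the covering property of strong compactness must be used in an essential way: one has $q \subseteq j[\kappa^{+}] \subseteq A$ with $|A|^{M} < j(\kappa)$, and the delicate point is to combine this smallness with the non-reflection of $q$ below $\gamma$ to exhibit, inside $M$, a club of $\gamma$ disjoint from $q$. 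The contrast with the supercompact case is instructive and is the structural reason the construction works: there $j[\kappa^{+}] \in M$ forces $j[S]$ to be \emph{stationary} in $\gamma$, so no master condition exists and supercompactness is genuinely destroyed, whereas for a strong compactness embedding the covering set $A$ provides enough slack to keep $q$ non-stationary and the lift to go through.
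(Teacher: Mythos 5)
You left open exactly the step that fails, and it fails provably, not just technically: once your forcing has added a non-reflecting stationary $S \subseteq E^{\kappa^+}_\omega$, the cardinal $\kappa$ is no longer even $\kappa^+$-strongly compact in $V[G]$, so no amount of work on the master condition can save the preservation argument. The reason is a classical reflection theorem for strong compactness, obtained by running the very computation you sketch for the supercompact case, which goes through without $j[\kappa^+] \in M$. Let $j : V \to M$ witness $\kappa^+$-strong compactness and $\gamma = \sup j[\kappa^+]$. Since $\mathrm{crit}(j) = \kappa$, $j$ is continuous at ordinals of cofinality $\omega$, so the map $\alpha \mapsto \sup j[\alpha]$ is increasing, continuous and cofinal from $\kappa^+$ into $\gamma$; hence for any club $C \subseteq \gamma$ lying in $V$ the set of $\alpha < \kappa^+$ with $\sup j[\alpha] \in C$ contains a club, and picking such an $\alpha \in S$ (of cofinality $\omega$) gives $j(\alpha) = \sup j[\alpha] \in C \cap j[S]$. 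Thus $j[S]$ is stationary in $\gamma$ \emph{in $V$} --- the covering set $A$ plays no role. Now every club of $\gamma$ that belongs to $M$ is a genuine club of $\gamma$ in $V$ (closedness and unboundedness of a set of ordinals are absolute), so $j(S) \supseteq j[S]$ meets every $M$-club of $\gamma$; and since, as you correctly note, $\cof^M(\gamma) \geq \cof^V(\gamma) = \kappa^+ > \omega$, the model $M$ sees that $j(S)$ reflects at a point of uncountable cofinality below $j(\kappa^+)$, whence by elementarity $S$ reflects in $V$. So the $M$-club of $\gamma$ disjoint from $q$ that you hoped to exhibit cannot exist; your ``structural reason the construction works'' is exactly backwards, since stationarity of $j[S]$ in $\gamma$ is witnessed in $V$ and $M$-clubs are $V$-clubs, so covering provides no slack whatsoever. (The same computation shows no condition of $j(\mathbb{P})$ extends all of $j[G]$, so no lift of any strong compactness embedding exists --- consistently with the fact that there is nothing left to lift.)

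This is precisely why non-reflecting stationary sets of points of cofinality $<\kappa$ cannot be used to separate strong compactness from supercompactness at $\kappa$ itself; in the literature such forcings are deployed at cofinalities $\geq \kappa$ or at other cardinals, where $j$ is not continuous and the reflection theorem does not apply. Magidor's argument, which the paper cites without proof, is entirely different in kind: starting from $\kappa$ supercompact, one performs the Magidor iteration of Prikry forcings below $\kappa$ to destroy all measurable cardinals below $\kappa$. Strong compactness of $\kappa$ survives (fine ultrafilters can be transferred through the iteration, using the Prikry property rather than master conditions), but in the extension $\kappa$ is the least measurable cardinal, while any supercompact --- indeed any $2^\kappa$-supercompact --- cardinal is a limit of measurables; so $\kappa$ is strongly compact but not supercompact. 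Note also, as a minor point, that your closure claim should read $\kappa^+$-strategic closure (strategic closure of length $\kappa^+$), not $\kappa$-strategic closure, if the forcing is to preserve cardinals up to $\kappa^+$; but this is immaterial next to the main obstruction above.
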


\begin{fact}[Shelah {\cite[Chapter XIII, 1.6 and 1.10]{Sh_P}}] \label{fact:strongly_compact_ssr}
Assume that $\kappa$ is a strongly compact cardinal.
Let $( \Pcal_\alpha , \dot{\Qcal}_\beta : \alpha \leq \kappa , \beta < \kappa )$
be a revised countable support iteration of semi-proper posets  of size $< \kappa$
such that $\kappa = \omega_2$ in $V^{\Pcal_\kappa}$.
Then $\ssref$ holds in $V^{\Pcal_\kappa}$.
\end{fact}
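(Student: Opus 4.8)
The plan is to combine the standard theory of revised countable support (RCS) iterations with a reflection argument driven by the strong compactness of $\kappa$; the whole point is that the \emph{fullness} built into $\ssref$ is exactly what lets one replace the $\lambda$-closure of a supercompactness embedding (which is what one uses for $\sref$) by the mere covering property of a strong compactness embedding. First I would record the purely iteration-theoretic facts from Shelah's RCS machinery: since each $\dot\Qcal_\beta$ has size $<\kappa$ and $\kappa$ is inaccessible (being strongly compact), $\Pcal_\kappa$ is semiproper and has the $\kappa$-chain condition, $\omega_1$ is preserved, and by hypothesis $\kappa=\omega_2$ in $V^{\Pcal_\kappa}$. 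Fix a $\Pcal_\kappa$-generic $G$ and work in $V[G]$; let $\lambda\geq\omega_2$ and let $S$ be a full stationary subset of $[\lambda]^\omega$. We must produce $I\in[\lambda]^{\omega_1}$ with $\omega_1\subseteq I$ and $S\cap[I]^\omega$ stationary in $[I]^\omega$.

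Next I would set up the embedding. Choosing $\theta$ large enough to capture a $\Pcal_\kappa$-name for $S$ and using that $\kappa$ is $\theta$-strongly compact, fix an elementary $j:V\to M$ with $\mathrm{crit}(j)=\kappa$, $j(\kappa)>\theta$, and a set $D\in M$ with $j[\theta]\subseteq D$ and $M\models|D|<j(\kappa)$; intersecting with $j(\lambda)$ we may take $\hat I:=D\cap j(\lambda)\supseteq j[\lambda]$. Because $\kappa$ is inaccessible and each iterand is small, $j(\Pcal_\kappa)$ is, in $M$, an RCS iteration of semiproper posets of length $j(\kappa)$ whose first $\kappa$ stages coincide with $\Pcal_\kappa$, so $j(\Pcal_\kappa)\cong\Pcal_\kappa\ast\dot\Pcal^{\mathrm{tail}}$ with $\dot\Pcal^{\mathrm{tail}}$ forced to be a semiproper poset of $M[G]$. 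I would then force below: let $H$ be $\dot\Pcal^{\mathrm{tail}}_G$-generic over $V[G]$ and work in $V[G][H]$. Since $\mathrm{crit}(j)=\kappa$ and $\Pcal_\kappa\subseteq H_\kappa$, $j$ fixes $\Pcal_\kappa$ pointwise, so $j''G=G\subseteq G\ast H$ and $j$ lifts canonically to $\tilde j:V[G]\to M[G][H]=:M^\ast$ with $\tilde j\in V[G][H]$.

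The heart of the argument, and the place where fullness enters, is the following. Since $\dot\Pcal^{\mathrm{tail}}$ is semiproper over $V[G]$, Shelah's characterization of semiproperness as the preservation of semi-stationarity applies: the $\sqsubseteq$-upward closure of the full set $S$, namely $S$ itself, stays semi-stationary, so $S^{+}:=\{\,y\in([\lambda]^\omega)^{V[G][H]}:\exists x\in S\ \ x\sqsubseteq y\,\}$ is stationary in $V[G][H]$. Now $\hat I\in M^\ast$ has $M^\ast$-cardinality $\leq\omega_1$, contains $\omega_1$, and $\tilde j(S)$ is full and stationary in $[j(\lambda)]^\omega$ by elementarity. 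The claim to establish is that $M^\ast\models\tilde j(S)\cap[\hat I]^\omega$ is stationary; granting it, elementarity of $\tilde j$ immediately yields the desired $I$ in $V[G]$. To verify the claim one uses that $\tilde j(x)=j[x]$ for countable $x$, so $j[x]\in\tilde j(S)$ whenever $x\in S$; feeding a level algebra $F\in M^\ast$ on $\hat I$ and projecting countable elementary substructures $N\prec(\hat I,F,\dots)$ back along $(j\rst\lambda)^{-1}$, the stationarity of $S^{+}$ in $V[G][H]$ produces $N$ with $j[x]\sqsubseteq N$ for some $x\in S$, whereupon fullness of $\tilde j(S)$ would place $N$ in $\tilde j(S)\cap[\hat I]^\omega$.

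The main obstacle is precisely this last verification: the witnesses $N$ produced by the stationarity of $S^{+}$ live in $V[G][H]$, whereas ``$\tilde j(S)\cap[\hat I]^\omega$ is stationary'' must be decided inside $M^\ast$, which, after the long semiproper tail $H$, is not $\omega$-closed in $V[G][H]$, so clubs on $[\hat I]^\omega$ need not agree between the two models. Bridging this gap is the technical core of Shelah's Chapter XIII argument: one must show, using the genericity of $H$ over $M[G]$ and a careful choice of the covering set $D$, that the relevant $\sqsubseteq$-closure computation is absolute between $M^\ast$ and $V[G][H]$ for the algebras in $M^\ast$, forcing the witness into $M^\ast$. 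It is exactly here that fullness is indispensable: without it one would need $\tilde j(S)\cap[j[\lambda]]^\omega$ to be recognized, which requires $j[\lambda]\in M$ and hence the $\lambda$-closure of a supercompactness embedding; fullness lets the larger, $M$-definable set $\hat I$ stand in for $j[\lambda]$, which is all that a strong compactness embedding provides.
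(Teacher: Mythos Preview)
The paper does not prove this statement; it is quoted as a fact from Shelah \cite[Chapter XIII, 1.6 and 1.10]{Sh_P} and used as a black box in the proof of Theorem~\ref{thm:ssr_itp}. So there is no proof in the paper to compare your attempt against, and I can only assess the proposal on its own merits.

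Your overall architecture---lift a strongly compact embedding $j:V\to M$ through the iteration, take the covering set $\hat I\supseteq j[\lambda]$ in place of $j[\lambda]$ itself, and use \emph{fullness} of $\tilde j(S)$ to conclude that $\tilde j(S)\cap[\hat I]^\omega$ is stationary in $M^\ast$---is indeed the standard strategy, and your explanation of why fullness is exactly what replaces the $\lambda$-closure of a supercompactness embedding is correct and well put. However, the argument as written has a genuine gap beyond the one you flag. You assert that the tail $\dot\Pcal^{\mathrm{tail}}$ is semiproper \emph{over $V[G]$} and invoke preservation of semi-stationarity of $S$ in $V[G][H]$; but the RCS machinery only gives semiproperness of the tail in $M[G]$, and semiproperness is not upward absolute from $M[G]$ to $V[G]$. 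Indeed it is not even clear a~priori that $\Pcal^{\mathrm{tail}}$ preserves $\omega_1$ over $V[G]$. So the stationarity of $S^{+}$ in $V[G][H]$, which your verification step relies on, is itself unjustified. The obstacle you do identify---that the witness $N$ lives in $V[G][H]$ while the target statement is internal to $M^\ast$---is a second, related gap, and your proposed bridge (``absoluteness of the $\sqsubseteq$-closure computation'' via a careful choice of $D$) is not an argument but a hope. The actual completion in Shelah runs the density argument \emph{inside} $M[G]$: one fixes a $\Pcal^{\mathrm{tail}}$-name $\dot F\in M[G]$ and a condition $p$, produces in $V[G]$ a suitable countable elementary substructure whose trace on $j[\lambda]$ is $j[x]$ for some $x\in S$, and then uses the semiproperness of the tail \emph{in $M[G]$} (not in $V[G]$) together with the fact that the relevant countable set is already in $M[G]$ (which needs an auxiliary covering/approximation argument for countable sets) to obtain a semi-generic $q\le p$; fullness of $\tilde j(S)$ in $M^\ast$ then absorbs the resulting $N[H]\cap\hat I$. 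Your sketch points in the right direction but stops short of this.
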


\begin{fact}[Viale-Weiss \cite{VW}] \label{fact:itp_supercompact}
Assume that $\kappa$ is an inaccessible cardinal.
Assume also that there exists a countable support iteration
$( \Pcal_\alpha , \dot{\Qcal}_\beta : \alpha \leq \kappa , \beta < \kappa )$
of proper posets of size $< \kappa$
such that $\kappa$ has the super tree property in $V^{\Pcal_\kappa}$.
Then $\kappa$ is supercompact in $V$.
\end{fact}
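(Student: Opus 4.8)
The plan is to reduce everything to Magidor's characterization of supercompactness quoted in the introduction: since $\kappa$ is assumed inaccessible in $V$, it suffices to prove that $\kappa$ has the super tree property in $V$, i.e.\ that $\ITP (\kappa,\lambda)$ holds in $V$ for every $\lambda\geq\kappa$. The entire difficulty is therefore to transfer the super tree property from $W:=V^{\Pcal_\kappa}$ back down to $V$, and the engine for this transfer will be two structural properties of the iteration $\Pcal_\kappa$.

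First I would isolate the two forcing-theoretic properties of $\Pcal_\kappa$ that drive the argument. \emph{(i) The $\kappa$-chain condition.} Since each iterand has size $<\kappa$, the support is countable, and $\kappa$ is inaccessible, a $\Delta$-system argument shows that $\Pcal_\kappa$ is $\kappa$-cc; this gives the $\kappa$-covering property, namely that every set of ordinals of size $<\kappa$ in $W$ is contained in a set of size $<\kappa$ lying in $V$. \emph{(ii) The $\kappa$-approximation property:} if $A\in W$ is a set of ordinals with $A\cap z\in V$ for every $z\in V$ of $V$-cardinality $<\kappa$, then $A\in V$. For a single iterand $\Qcal$ of size $\mu<\kappa$ this is easy, because inaccessibility gives $|\mathrm{r.o.}(\Qcal)|\leq 2^{\mu}<\kappa$, so the Boolean values $\|\alpha\in\dot A\|$ range over fewer than $\kappa$ classes; picking one ordinal from each class into a single $z\in[\lambda]^{<\kappa}\cap V$, the approximation $A\cap z$ already recovers $A$ inside $V$. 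I expect the genuinely hard step to be propagating (ii) through the whole iteration: one argues by induction on the length, treating successor stages by factoring and the small-step computation above, and treating limit stages by invoking properness (via an elementary-submodel/fusion argument) to rule out ``fresh'' sets whose $<\kappa$-approximations all lie in $V$ but which are themselves new.

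With (i) and (ii) in hand I would carry out the descent through the guessing-model reformulation of the tree properties due to Weiss and Viale--Weiss: $\ITP (\kappa,\lambda)$ is equivalent to the existence of stationarily many $\kappa$-guessing models $M\in[H_\lambda]^{<\kappa}$, where $M$ is $\kappa$-guessing if every $d\subseteq x\in M$ with $d\cap a\in M$ for all $a\in M$ of size $<\kappa$ is guessed by some $e\in M$ (meaning $e\cap M=d\cap M$). Fixing $\lambda$, the super tree property in $W$ supplies stationarily many $\kappa$-guessing $N\prec H_\lambda^W$; using the $\kappa$-cc together with a pressing-down argument I would select such $N$ for which $M:=N\cap H_\lambda^V$ lies in $V$ and is elementary there. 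The crux is then to show, \emph{using the $\kappa$-approximation property of $\Pcal_\kappa$}, that $M$ is $\kappa$-guessing in $V$: given $d\in V$ approximated by $M$ in $V$, the $\kappa$-covering property lets one check that $d$ is approximated by $N$ in $W$, hence guessed there, and the approximation property forces the guessing witness to be (the restriction of) an object of $V$, which then lies in $M$ and guesses $d$ in $V$.

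Finally, running this over a stationary set of $N$'s and transferring stationarity down through the $\kappa$-cc yields stationarily many $\kappa$-guessing models in $[H_\lambda^V]^{<\kappa}$ for every $\lambda$, i.e.\ the super tree property holds in $V$; Magidor's theorem then gives that $\kappa$ is supercompact in $V$, as required. The main obstacle, as indicated, is the proof of the $\kappa$-approximation property for the proper countable support iteration together with its correct interaction with the new small sets that the proper forcing adds; this is exactly the point at which properness of the iterands, and not merely their size being $<\kappa$, is essential.
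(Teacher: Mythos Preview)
The paper does not give its own proof of this statement; it is quoted as a fact from Viale and Weiss \cite{VW} and used as a black box in the proof of Theorem~\ref{thm:ssr_itp}. Your outline is precisely the Viale--Weiss strategy (summarized in the paper's introduction): establish that the countable support iteration $\Pcal_\kappa$ has the $\kappa$-chain condition and the $\kappa$-approximation property, use these together with properness to transfer the super tree property (via the guessing-model reformulation) from $V^{\Pcal_\kappa}$ down to $V$, and then invoke Magidor's characterization of supercompactness. So there is nothing to compare against here; your proposal is the intended argument, and you have correctly identified the approximation property of the proper iteration as the nontrivial ingredient.
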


\begin{proof}[Proof of Theorem \ref{thm:ssr_itp}]
Assume that $\kappa$ is strongly compact in $V$.
By Fact \ref{fact:identity_crisis} we may assume that $\kappa$ is not supercompact.
Let $( \Pcal_\alpha , \dot{\Qcal}_\beta : \alpha \leq \kappa , \beta < \kappa )$
be the countable support iteration of  Cohen forcing.
Here recall that a revised countable support iteration coincides with
a countable support iteration for proper posets.
Note also that $\kappa = \omega_2$ in $V^{\mbb{P}_\kappa}$.
Hence $\ssref$ holds $V^{{\Pcal}_\kappa}$ by Fact \ref{fact:strongly_compact_ssr}.
Moreover $\mrm{MA}_{\omega_1} ( \mbox{Cohen} )$ holds in $V^{{\Pcal}_\kappa}$.
By Fact \ref{fact:itp_supercompact}, $\omega_2$ does not have the super tree property in
$V^{{\Pcal}_\kappa}$.
\end{proof}

We end this section with some remarks. In Theorem \ref{thm:ssr_itp} we have proved that
$\ssref + \mrm{MA}_{\omega_1} ( \mrm{Cohen} )$ does not imply the super tree property
at $\omega_2$. In fact we can prove that it does not imply $\mrm{ITP} ( \omega_2 , \omega_3 )$.
We outline the proof.  For a regular uncountable cardinal $\kappa$ let
\[
U_\kappa = \{ u \in [ \kappa^+ ]^{< \kappa} :
u \cap \kappa \in \kappa \,\wedge\, \otype (u) = ( u \cap \kappa )^+ \} \; .
\]
It is easy to see that if $\kappa$ is $\kappa^+$-supercompact,
then $U_\kappa$ is stationary in $[ \kappa^+ ]^{< \kappa}$.
On the other hand Krueger \cite{Kr} proved that
this does not follow from the strong compactness of $\kappa$.

\begin{fact}[Krueger \cite{Kr}] \label{fact:strongly_compact_U}
Assume that $\kappa$ is a supercompact cardinal.
Then there is a forcing extension in which $\kappa$ is strongly compact,
and $U_\kappa$ is non-stationary.
\end{fact}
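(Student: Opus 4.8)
The plan is to start from the supercompact $\kappa$ and force with a poset $\mathbb{P}$ which adds a club $C \subseteq [\kappa^+]^{<\kappa}$ with $C \cap U_\kappa = \emptyset$, thereby rendering $U_\kappa$ non-stationary, while preserving the strong compactness of $\kappa$. The destruction of supercompactness then comes for free: if $j \colon V \to M$ is any $\kappa^+$-supercompactness embedding with critical point $\kappa$, then $j[\kappa^+]$ has order type $(\kappa^+)^V$, satisfies $j[\kappa^+] \cap j(\kappa) = \kappa$, and lies in $M$ with $(\kappa^+)^M = (\kappa^+)^V$, so that $j[\kappa^+] \in j(U_\kappa)$ and hence $U_\kappa$ is stationary. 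Conversely, once $U_\kappa$ is non-stationary no such embedding can survive and $\kappa$ cannot be supercompact. Thus the entire content of the theorem is the preservation of strong compactness.

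I would let $\mathbb{P}$ be the club-shooting poset whose conditions are $<\kappa$-closed bounded subsets of $[\kappa^+]^{<\kappa}$ disjoint from $U_\kappa$, ordered by end-extension. A standard argument gives that $\mathbb{P}$ is $<\kappa$-closed, and the generic union is a club disjoint from $U_\kappa$, so $U_\kappa$ becomes non-stationary in $V^{\mathbb{P}}$. To preserve strong compactness, fix $\lambda \geq \kappa^+$ and, using that $\kappa$ is $\lambda$-supercompact, choose a $\lambda$-strongly compact embedding $j \colon V \to M$ with critical point $\kappa$ whose associated fine measure is \emph{not} $\kappa^+$-closed, so that $(\kappa^+)^M < (\kappa^+)^V$ (equivalently $j[\kappa^+] \notin M$). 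Let $s \in M$ be the seed, i.e. a set with $j[\lambda] \subseteq s$ and $|s|^M < j(\kappa)$, normalized so that $s \cap j(\kappa) = \kappa$.

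The key point is that this seed $s$ is a legal master condition for lifting $j$ through $\mathbb{P}$. Indeed, since $j[\kappa^+] \subseteq s$ we have $\otype(s) \geq (\kappa^+)^V > (\kappa^+)^M = (s \cap j(\kappa))^{+}$ as computed in $M$, so $s \notin j(U_\kappa)$; this is precisely the step that fails for a supercompactness embedding and is available here exactly because $M$ miscomputes $\kappa^+$. Given a $V$-generic $G$ for $\mathbb{P}$ with generic club $C_G$, the set $q = \overline{j[C_G]} \cup \{ s \}$ is then a condition of $j(\mathbb{P})$ in $M$: each $j(u) = j[u]$ for $u \in C_G$ avoids $j(U_\kappa)$ by elementarity, the whole family has $M$-size $< j(\kappa)$ and is therefore bounded, and the single top point $s$ avoids $j(U_\kappa)$ by the computation above and extends $\bigcup j[C_G] = j[\kappa^+]$. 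Since $q \leq j(p)$ for every $p \in G$, one can build an $M$-generic object below $q$ and lift $j$ to $j \colon V[G] \to M[H]$, witnessing $\lambda$-strong compactness in $V[G]$. As $\lambda$ was arbitrary, $\kappa$ remains strongly compact.

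The main obstacle I anticipate is twofold. First, one must actually produce, uniformly in $\lambda$, $\lambda$-strongly compact embeddings with $(\kappa^+)^M < (\kappa^+)^V$ and a seed meeting $s \cap j(\kappa) = \kappa$; this is where the full strength of supercompactness of $\kappa$ (rather than mere strong compactness) is used to manufacture the required non-normal fine measures, and one must check that the induced $M$-generic below $q$ genuinely meets all the relevant dense sets. Second, one must verify that $\mathbb{P}$ preserves $\kappa^+$ and collapses no cardinals, so that the statement that $U_\kappa$ is non-stationary remains meaningful; because $U_\kappa$ is a ``fat'' subset of $[\kappa^+]^{<\kappa}$, shooting a club through its complement is not automatically cardinal-preserving, and this has to be established in tandem with the closure and distributivity of $\mathbb{P}$ and, ultimately, with the same lifting that yields strong compactness.
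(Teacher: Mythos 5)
This Fact is quoted in the paper without proof (it is Krueger's theorem, cited as \cite{Kr}), so your attempt has to be judged on its own merits, and it contains two fatal errors. First, the pivot of your argument --- a $\lambda$-strongly compact embedding $j\colon V\to M$ with $(\kappa^+)^M<(\kappa^+)^V$ --- does not exist. For \emph{any} elementary embedding $j\colon V\to M$ with $M$ transitive, $j\in V$ and $\mathrm{crit}(j)=\kappa$, one has $(\kappa^+)^M=(\kappa^+)^V$: every $\alpha<(\kappa^+)^V$ is coded by a well-ordering of $\kappa$, i.e.\ by a subset $A\subseteq\kappa$, and $A=j(A)\cap\kappa\in M$, so $M$ sees that $|\alpha|\leq\kappa$; conversely $M\subseteq V$ has no new bijections. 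Your parenthetical ``equivalently $j[\kappa^+]\notin M$'' conflates two different things: $j[\kappa^+]\notin M$ is indeed achievable for strong-compactness embeddings (it is a failure of \emph{closure} of $M$, not a miscomputation of $\kappa^+$), but then the inequality $\otype(s)>(\kappa^+)^M$ ``because $M$ miscomputes $\kappa^+$'' is vacuous, and the claim that $j[\kappa^+]\notin j(U_\kappa)$ via an order-type mismatch at $\kappa^+$ collapses: $j[\kappa^+]$ has order type $(\kappa^+)^V=(\kappa^+)^M$ and trace $\kappa$, so whenever it is available it lies \emph{inside} $j(U_\kappa)$.

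Second, and independently, your master condition $q=\overline{j[C_G]}\cup\{s\}$ is never an element of $M$, so it is not a condition of $j(\mathbb{P})$ at all. Since $j\rst[\kappa^+]^{<\kappa}\in V$ and $M\subseteq V$, if $j[C_G]\in M$ then $C_G=\{x\in[\kappa^+]^{<\kappa}: j(x)\in j[C_G]\}$ would be computable in $V$, contradicting the genericity of $C_G$ over $V$. The same pullback argument shows that \emph{no} condition of $j(\mathbb{P})$ in $M$ can lie below $j(p)$ for every $p\in G$, for any embedding whatsoever: the one-step master-condition lift of a club-shooting at $\kappa$ itself is provably impossible in the form you propose, which is exactly why the theorem is hard and why Krueger's actual proof is architecturally different (he works with an iteration and Magidor-style arguments to resurrect strong compactness, rather than lifting a single embedding through a single club-shooting above all of $j[G]$). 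Two smaller points: your poset is not $<\kappa$-closed as claimed --- the union of a $<\kappa$-chain of conditions may acquire a limit point lying in $U_\kappa$, so at best one proves strategic closure, which is part of the real work you defer to the last paragraph --- and the normalization $s\cap j(\kappa)=\kappa$ for a strong-compactness seed is itself not free. What \emph{is} correct in your write-up is the first paragraph: any $\kappa^+$-supercompactness embedding puts $j[\kappa^+]\in j(U_\kappa)$, so $U_\kappa$ is stationary if $\kappa$ is $\kappa^+$-supercompact, and hence non-stationarity of $U_\kappa$ kills supercompactness for free; this matches the remark the paper makes just before the Fact.
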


\noindent
Moreover we can prove the following.

\begin{fact} \label{fact:itp_U}
Assume that $\kappa$ is an inaccessible cardinal.
Assume also that there is a countable support iteration
$( \Pcal_\alpha , \dot{\Qcal}_\beta : \alpha \leq \kappa , \beta < \kappa )$
of proper posets of size $< \kappa$ such that
$\mrm{ITP} ( \kappa , \kappa^+ )$ holds in $V^{\Pcal_\kappa}$.
Then $U_\kappa$ is stationary in $[ \kappa^+ ]^{< \kappa}$ in $V$.
\end{fact}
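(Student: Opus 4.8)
The plan is to run the Viale--Weiss argument behind Fact~\ref{fact:itp_supercompact}, but with $\lambda$ fixed equal to $\kappa^+$ and extracting from the ineffable branch only the concentration of a filter on $U_\kappa$, rather than a full supercompactness embedding. First I would record the relevant properties of $\Pcal_\kappa$. Since it is a countable support iteration of proper posets of size $< \kappa$ and $\kappa$ is inaccessible, $\Pcal_\kappa$ is proper, has size $\kappa$ and the $\kappa$-chain condition; in particular $\kappa$ and $\kappa^+$ remain cardinals and every element of $([\kappa^+]^{<\kappa})^{V^{\Pcal_\kappa}}$ is covered by one in $([\kappa^+]^{<\kappa})^V$. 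The essential extra ingredient is that $\Pcal_\kappa$ has the \emph{$\kappa$-approximation property}: this is exactly the property isolated by Viale and Weiss for such iterations (it is what drives the proof of Fact~\ref{fact:itp_supercompact}), and it will be the engine transferring a branch from $V^{\Pcal_\kappa}$ back to $V$.

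Next, working in $V[G]$ where $G$ is $\Pcal_\kappa$-generic, I would build a thin $(\kappa,\kappa^+)$-tree $\F$ together with a level sequence $\vec f = ( f_u : u \in [\kappa^+]^{<\kappa})$ in the manner of Viale--Weiss. Over ground-model domains $u \in ([\kappa^+]^{<\kappa})^V$ the node $f_u$ is chosen to code, by a fixed ground-model coding, the $V$-trace of a candidate normal fine filter on $[\kappa^+]^{<\kappa}$ (equivalently, the restriction to $u$ of a guessing-model trace), so that the agreement of a branch with $\vec f$ is tied to the combinatorics defining $U_\kappa$; over the remaining domains the tree is filled in thinly. Thinness of $\F$ follows from the $\kappa$-chain condition together with the inaccessibility of $\kappa$, which bound the number of nodes on each level by $<\kappa$. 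Applying $\mrm{ITP}(\kappa,\kappa^+)$ in $V[G]$ yields an ineffable branch $b \in {}^{\kappa^+}2$ for $\vec f$, i.e. $S_b = \{\, u \in [\kappa^+]^{<\kappa} : b \rst u = f_u \,\}$ is stationary.

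Because on ground-model domains the values of $\vec f$ are coherent $V$-objects, one shows that every approximation $b \cap z$ with $z \in ([\kappa^+]^{<\kappa})^V$ already lies in $V$: given such a $z$, pick $u \in S_b$ in $V$ with $z \subseteq u$ and read $b \cap z = f_u \cap z$ off the ground-model coding. By the $\kappa$-approximation property this forces $b \in V$. Finally, back in $V$ the object $b$ decodes to a normal fine filter $U$ on $[\kappa^+]^{<\kappa}$, and the stationarity of $S_b$ translates into fineness and normality of $U$ and into the fact that the set of $u$ with $u \cap \kappa \in \kappa$ and $\otype(u) = (u \cap \kappa)^+$ belongs to $U$; since sets in a normal fine filter are stationary, this shows that $U_\kappa$ is stationary in $[\kappa^+]^{<\kappa}$ in $V$, as required.

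The hard part will be the two places where the construction has to be engineered precisely: verifying the $\kappa$-approximation property for the countable support iteration and checking that the ineffable branch is genuinely $V$-approximated, so that it descends to $V$; and designing the coding of $\vec f$ so that its agreement set is forced to concentrate on the points with $\otype(u) = (u \cap \kappa)^+$. It is this second point that pins the conclusion to $U_\kappa$ rather than to an arbitrary stationary subset of $[\kappa^+]^{<\kappa}$.
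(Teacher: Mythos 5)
Your first paragraph is the right and standard machinery: the countable support iteration of proper posets of size $< \kappa$ is $\kappa$-cc, has the $\kappa$-covering and $\kappa$-approximation properties, and these let an ineffable branch for a ground-model list descend to $V$ --- this is exactly the Viale--Weiss engine behind Fact~\ref{fact:itp_supercompact}, and it is indeed the intended first half. The genuine gap is everything specific to $U_\kappa$, which you defer to an unspecified ``coding'' and which, in the shape you propose, cannot work. A branch $b \in {}^{\kappa^+} 2$ carries only $\kappa^+$ bits, so it cannot ``decode to a normal fine filter on $[\kappa^+]^{<\kappa}$'' (such a filter may have $2^{\kappa^+}$ members); worse, the final step is circular: you conclude $U_\kappa$ is stationary because it belongs to a normal fine filter, but exhibiting a normal fine filter containing $U_\kappa$ is at least as strong as the statement to be proved, and you give no construction of such a filter from $b$ nor any reason $U_\kappa$ would belong to it. The step you yourself flag as ``the hard part'' --- designing $\vec f$ so that the agreement set concentrates on points with $\otype (u) = (u \cap \kappa)^+$ --- is the entire mathematical content of the Fact, and no idea for it is offered. (Also, ``pick $u \in S_b$ in $V$ with $z \subseteq u$'' silently assumes the agreement set contains ground-model points; arranging that is precisely the purpose of the careful extension of a $V$-list to the new domains in the transfer argument, not something available for free.)

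Here is how the gap can actually be closed. First apply the transfer theorem with $\lambda = \kappa^+$ to conclude that $\kappa$ is $\kappa^+$-ineffable in $V$: since $\kappa$ is inaccessible every $(\kappa, \kappa^+)$-tree is thin, so $\mrm{ITP} ( \kappa , \kappa^+ )$ in $V^{\Pcal_\kappa}$ gives, for every $V$-list $( d_u : u \in ([\kappa^+]^{<\kappa})^V )$ with $d_u \subseteq u$, some $D \subseteq \kappa^+$ in $V$ whose agreement set is stationary in $V$. Then prove in $\mrm{ZFC}$ that $\kappa^+$-ineffability of an inaccessible $\kappa$ implies $U_\kappa$ is stationary, as follows. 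Fix surjections $e_\beta : \kappa \to \beta$ for $\kappa \leq \beta < \kappa^+$; on the club of $u$ closed under $( \beta , \xi ) \mapsto e_\beta ( \xi )$ and $\gamma \mapsto \min e_\beta^{-1} ( \gamma )$ one has $u \cap \beta \subseteq e_\beta '' ( u \cap \kappa )$, hence $| u \cap \beta | \leq | u \cap \kappa |$ for all $\beta \in u$, and therefore $\otype (u) \leq ( u \cap \kappa )^+$ whenever $u \cap \kappa \in \kappa$. Consequently, if $U_\kappa$ were nonstationary, then on a club every relevant $u$ would satisfy $\otype (u) < ( u \cap \kappa )^+$, i.e.\ $|u| \leq u \cap \kappa$; feed ineffability the list in which $d_u$ codes, via a fixed pairing function on $\kappa^+$, an injection $g_u : u \to u \cap \kappa$. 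For $u \subseteq v$ in a stationary set of agreement points the codes cohere (any two agreement points are contained in a third, and all decode restrictions of $D$), so $D$ defines a single injection of $\kappa^+$ into $\kappa$ --- a contradiction. Thus stationarily many $u$ with $u \cap \kappa \in \kappa$ satisfy $|u| = ( u \cap \kappa )^+$, and intersecting with the $e$-closure club yields $\otype (u) = ( u \cap \kappa )^+$, i.e.\ stationarily many points of $U_\kappa$ closed under any given $F \in V$. It is this pair of observations --- wide points are killed by the $e_\beta$-club, and uniformly small points are killed by an ineffable branch coding an impossible injection --- that pins the conclusion to $U_\kappa$, and it is exactly what your filter-coding sketch leaves unsupplied.
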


\noindent
Using Facts \ref{fact:strongly_compact_U} and \ref{fact:itp_U}
instead of Facts \ref{fact:identity_crisis} and \ref{fact:itp_supercompact},
by the same argument as Theorem \ref{thm:ssr_itp},
we can prove that $\ssref + \mrm{MA}_{\omega_1} ( \mrm{Cohen} )$
does not imply $\mrm{ITP} ( \omega_2 , \omega_3 )$.

On the other hand, we can also prove that $\ssref + \mrm{MA}_{\omega_1} ( \mrm{Cohen} )$
implies $\mrm{ITP} ( \omega_2 , \omega_2 )$:
Assume $\ssref$, and suppose that $\F$ is a thin $( \omega_2 , \omega_2 )$-tree
and that $\vec{f} = ( f_u : u \in [ \omega_2 ]^{\leq \omega_1} )$
is a level sequence of $\F$.
Let $\theta$, $\unlhd$, $\frak A$ and $Z$ be as in the proof of Theorem \ref{thm:itp_tp}.
Here recall the fact, due to Foreman, Magidor and Shelah \cite{FMS},
that $\ssref$ (equivalently $( \dagger )$) implies Strong Chang's Conjecture.
In fact it implies the following.
\begin{quote}
There are club many $M \in [ H_\theta ]^\omega$ such that
$\skull^{\frak A} ( M \cup \{ \delta \} ) \cap \delta = M \cap \omega_2$,
for stationary many $\delta \in \omega_2$.
\end{quote}
Take such $M \in Z$, and let $E$ be the set of all $\delta \in \omega_2$
with $\skull^{\frak A} ( M \cup \{ \delta \} ) \cap \delta = M \cap \omega_2$.
Then, by the same argument as in the proof of Theorem \ref{thm:itp_tp} (a),
for any $\delta \in E$ there is $b_\delta \in {}^{\omega_2} 2 \cap M$
such that $b_\delta \rst \delta = f_\delta$.
Take $b \in {}^{\omega_2} 2$ such that
$\{ \delta \in E : b_\delta = b \}$ is stationary.
Then $b$ is an ineffable branch for $\vec{f}$.


\section{Singular cardinal hypothesis} \label{sec:sch}

In this section we prove that $\ssref$ implies $\mrm{SCH}$.
In fact we prove the following:

\begin{theorem} \label{thm:better_scale}
Assume that $\lambda$ is a singular cardinal of cofinality $\omega$
and that $\ssref ( \lambda^+ )$ holds.
Then for any strictly increasing sequence $\vec{\lambda} = ( \lambda_n : n < \omega )$
of regular cardinals converging to $\lambda$ there are no better scales on $\vec{\lambda}$.
\end{theorem}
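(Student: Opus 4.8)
The plan is to argue by contradiction, paralleling the proof of Theorem \ref{thm:weak_square}: I assume that $\vec f = (f_\beta : \beta < \lambda^+)$ is a better scale on $\vec\lambda$ and produce a weakly full stationary set $X \subseteq [\lambda^+]^\omega$ that violates the conclusion of Lemma \ref{lem:ssr_sup}. The set $X$ should consist of the countable $x$ that ``see'' a discontinuity of the scale at cofinality $\omega_1$ points, measured through the characteristic function $\chi_x$. Concretely I would let $x \in X$ just in case $\bsup(x)$ is a limit of cofinality $\omega$ and there is a cofinal family of $\gamma \in x$ with $\cof(\gamma) = \omega_1$ such that, writing $\xi_\gamma = \bsup(x \cap \gamma)$, the function $\chi_x$ overshoots the scale value carried at $\gamma$; the precise inequality relating $\chi_x$, $f_{\xi_\gamma}$ and $f_\gamma$ is the delicate point. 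Weak fullness is the first thing I would check: since $\chi_x$ is monotone under $\subseteq$, and since $\sqsubseteq^*$ preserves both $\bsup(x)$ (clause (ii)) and each trace $\bsup(x \cap \gamma)$ at the relevant cofinality $\omega_1$ points $\gamma \in x$ (clause (iii)), membership in $X$ is upward closed under $\sqsubseteq^*$. This is exactly why the notion $\sqsubseteq^*$, and hence Lemma \ref{lem:ssr_sup} rather than raw $\ssref$, is tailored to this argument.

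Next I would establish the global stationarity of $X$ using only that $\vec f$ is a scale, in direct analogy with the stationarity claim inside Theorem \ref{thm:weak_square}. Given $F : [\lambda^+]^{<\omega} \to \lambda^+$ I would fix a winning strategy $\tau$ for player I in the game $G_1(\lambda^+, F)$ from Lemma \ref{lem:game1} and run it to manufacture $x = \clsr_F(\{\gamma_n : n \in \omega\})$, closed under $F$, whose indices have empty gaps $[\alpha_n, \gamma_n)$ just below the cofinality $\omega_1$ markers $\gamma_n$. The point is that the regularity of the $\lambda_n$ forces $\chi_x$ to accumulate the values $f_{\gamma_n}$, while the empty gaps prevent these from being absorbed below $\xi_{\gamma_n}$; cofinality of the scale then lets one land at a limit $\delta = \bsup(x)$ at which this accumulation genuinely overshoots, so that $x \in X$. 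With $X$ weakly full and stationary, Lemma \ref{lem:ssr_sup} yields $I \in [\lambda^+]^{\omega_1}$ with $\omega_1 \subseteq I$ such that $X \cap [J]^\omega$ is stationary for every $J \supseteq I$ with $\bsup(J) = \bsup(I)$; taking $J$ to be the ordinal $\delta = \bsup(I) < \lambda^+$ (legitimate since $I \subseteq \delta$ and $\bsup(\delta) = \delta$) gives that $X \cap [\delta]^\omega$ is stationary in $[\delta]^\omega$.

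The contradiction then comes from showing that $X \cap [\delta]^\omega$ is in fact nonstationary for every $\delta < \lambda^+$. Since $|I| = \omega_1$ we have $\cof(\delta) \leq \omega_1$, so only three cases arise. When $\delta$ is a successor or $\cof(\delta) = \omega$, the set $X \cap [\delta]^\omega$ is nonstationary for the same soft reasons as in Theorem \ref{thm:weak_square} (a club of $z$ with $\bsup(z) = \delta$ meeting a fixed cofinal $\omega$-sequence avoids $X$). The essential case is $\cof(\delta) = \omega_1$, and this is where betterness of $\vec f$ enters: I would invoke the witness $(C, \sigma)$ for $\delta$, with $C \subseteq \delta$ club and $\sigma : C \to \omega$ satisfying $f_\beta <_{\max\{\sigma(\beta), \sigma(\gamma)\}} f_\gamma$ for all $\beta < \gamma$ in $C$, and use it to build a club $E \subseteq [\delta]^\omega$ of countable sets $z$ that are ``continuous along $C$'', for which $\chi_z$ is eventually dominated by $f_{\bsup(z)}$ and hence $z \notin X$.

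The main obstacle is precisely this last step. The inequalities $f_\beta <_{\max\{\sigma(\beta), \sigma(\gamma)\}} f_\gamma$ control only pairs from $C$ and involve the unbounded error function $\sigma$, so a single countable $z$ need not possess a uniform threshold beyond which its scale values increase; a priori betterness yields only stationarily, not club, many continuous $z$. Overcoming this is the heart of the proof: one must exploit that betterness orders all of $C$ coherently at once (not merely along one cofinal sequence, as mere goodness would), together with the preserved traces $\bsup(z \cap \gamma)$ at cofinality $\omega_1$ points furnished by clause (iii) of $\sqsubseteq^*$, to arrange that the relevant $z$ have $\sigma \restriction (z \cap C)$ controlled and $\chi_z =^* f_{\bsup(z)}$ on a genuine club. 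I expect the bookkeeping needed to convert the pairwise, $\sigma$-tolerant monotonicity of a better scale into club-many continuity points, and to match it exactly against the overshoot condition defining $X$, to be the most delicate part of the argument, and the place where the distinction between ``better'' and merely ``good'' is indispensable.
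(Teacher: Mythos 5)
Your skeleton does match the paper's: argue by contradiction, build a weakly full stationary $X \subseteq [\lambda^+]^\omega$, apply Lemma \ref{lem:ssr_sup}, split on the cofinality of the reflection point, and use betterness in the cofinality-$\omega_1$ case; your explanation of why clause (iii) of $\sqsubseteq^*$ secures weak fullness at cofinality-$\omega_1$ points is also on target. But both load-bearing components are absent, and the two places where you defer (``the precise inequality \dots is the delicate point'' and ``I expect the bookkeeping \dots to be the most delicate part'') are exactly where the proof lives. Worse, your intended endgame is structurally unattainable: you want a club of $z \in [\delta]^\omega$ with $\chi_z$ eventually dominated by $f_{\bsup(z)}$, so that $z \notin X$. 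This is false for any overshoot-style $X$. Indeed, let $( C , \sigma )$ witness betterness at $\delta$ (with $\cof(\delta) = \omega_1$, $\delta > \lambda$), and let $h(n) = \sup \{ f_\beta (n) : \beta \in C \,\wedge\, \sigma ( \beta ) \leq n \}$; then $h(n) < \lambda_n$ since $|C| = \omega_1 < \lambda_n$ and $\lambda_n$ is regular, and club-many $z$ satisfy $\sup (z) \in C$ and $h(n) \in z$ for all $n$, whence $\chi_z (n) > h(n) \geq f_{\sup (z)} (n)$ for all $n \geq \sigma ( \sup (z) )$. So overshoot holds on a \emph{club}: an $X$ defined by overshooting the scale reflects to every such $\delta$ and can never contradict Lemma \ref{lem:ssr_sup}. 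For the same reason your assertion that the $\cof ( \delta ) = \omega$ case goes through ``for the same soft reasons as in Theorem \ref{thm:weak_square}'' does not survive inspection --- there is no square sequence here to avoid, and overshooting $x$ with $\bsup (x) = \delta$ are plentiful.

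What the paper does instead, and what is missing from your proposal, is a \emph{coding} device: fix disjoint stationary $E_{n,0} , E_{n,1} \subseteq E^{\lambda_n}_{\omega_1}$ and a family $( b_\xi : \xi < \omega_1 )$ in ${}^\omega 2$ with $b_\xi \neq^* b_\eta$ for $\xi \neq \eta$, and put $x \in X$ iff $f_{\bsup (x)} < \chi_x$ \emph{and} the first-jump function $e_x (n) = \min ( x \setminus f_{\bsup (x)} (n) )$ lands in $E_{n , b_\xi (n)}$ for all but finitely many $n$, where $\xi = x \cap \omega_1$. The non-reflection half is then not club avoidance but code collision: in both cofinality cases one exhibits a club of $z \subseteq u$ on which $e_z$ equals one fixed function ($e_u$ when $\cof ( \sup (u) ) = \omega$; the function $e$ built from $h$ via betterness when $\cof ( \sup (u) ) = \omega_1$), so any two $x_0 , x_1 \in X$ in that club with $x_0 \cap \omega_1 \neq x_1 \cap \omega_1$ force $b_{\xi_0} =^* b_{\xi_1}$ by disjointness of $E_{n,0}$ and $E_{n,1}$ --- the contradiction. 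Correspondingly, the stationarity of $X$ cannot be obtained from $G_1 ( \lambda^+ , F )$ of Lemma \ref{lem:game1} as you suggest: one must pin down $x \cap \omega_1 = \xi$ in advance (to know which $b_\xi$ to encode), place each $\delta_n$ inside the prescribed $E_{n , b_\xi (n)} \subseteq \lambda_n$ coordinatewise, and drive $\sup (x)$ up to a prescribed $\rho < \lambda^+$ with a second stream of moves $\epsilon_n$; this is the modified game $G_3$ of Lemma \ref{lem:game3}, and betterness already enters \emph{here}, through Lemma \ref{lem:better_scale_stat} (stationarily many countable $N$ with $\chi_N \leq^* f_\rho$, $\rho = \sup ( N \cap \lambda^+ )$), which guarantees that the gaps $[ \alpha_n , \delta_n )$ sit above $f_\rho (n)$ eventually, so that $e_x (n) = \delta_n$. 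So betterness is used on both sides of the argument, not only at the reflection point as your sketch assumes.
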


\noindent
Here recall the fact, due to Foreman, Magidor and Shelah \cite{FMS},
that $\ssref$ implies the Strong Chang's Conjecture
and the fact, due to Todor\v{c}evi\'{c} \cite{Todorcevic_cc},
that the Strong Chang's Conjecture implies $2^\omega \leq \omega_2$.
So $\ssref$ implies $2^\omega \leq \omega_2$.
Then, using the theorem above and Fact \ref{fact:better_scale},
it is easy to see that if $\ssref$ holds, then $\lambda^\omega = \lambda^+$
for all singular cardinals of cofinality $\omega$.
Then it follows from Fact \ref{fact:Silver} that if $\ssref$ holds,
then so does $\mrm{SCH}$.

\begin{cor} \label{cor:sch}
$\ssref$ implies $\rm SCH$.
\qed
\end{cor}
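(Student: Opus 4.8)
The plan is to obtain Corollary~\ref{cor:sch} from Theorem~\ref{thm:better_scale} by a short cardinal-arithmetic argument, so that everything reduces to the theorem. First, since $\ssref$ implies the Strong Chang Conjecture (Foreman--Magidor--Shelah) and the latter implies $2^{\omega}\le\omega_2$ (Todor\v{c}evi\'{c}), we may assume $2^{\omega}\le\omega_2$. I would then prove by induction on singular cardinals $\lambda$ of cofinality $\omega$ that $\lambda^{\omega}=\lambda^{+}$. Granting $\mu^{\omega}<\lambda$ for every $\mu<\lambda$ — which follows from the induction hypothesis together with $2^{\omega}\le\omega_2$ and the usual computation of $\mu^{\omega}$ for $\mu$ of cofinality $>\omega$ — if we had $\lambda^{\omega}>\lambda^{+}$ then Fact~\ref{fact:better_scale} would furnish a strictly increasing $\omega$-sequence of regular cardinals converging to $\lambda$ that carries a better scale, contradicting Theorem~\ref{thm:better_scale} (which applies since $\ssref(\lambda^{+})$ holds). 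Hence $\lambda^{\omega}=\lambda^{+}$ for all singular $\lambda$ of cofinality $\omega$, and Fact~\ref{fact:Silver} lifts this to all singular cardinals, yielding $\mrm{SCH}$. Thus the real content is Theorem~\ref{thm:better_scale}, whose proof I sketch next.

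To prove Theorem~\ref{thm:better_scale} I would argue by contradiction, following the template of Theorem~\ref{thm:weak_square}. Fix a better scale $\vec{f}=(f_{\beta}:\beta<\lambda^{+})$ on $\vec{\lambda}$ and build a weakly full stationary set $X\subseteq[\lambda^{+}]^{\omega}$ for which the conclusion of Lemma~\ref{lem:ssr_sup} fails; by Lemma~\ref{lem:ssr_sup} this contradicts $\ssref(\lambda^{+})$. The set $X$ should consist of those countable $x$ with $\sup(x\cap\lambda)=\lambda$ that are \emph{scale-bad}, meaning that the functions $(f_{\beta}:\beta\in x)$ admit no uniform finite threshold $n$ and no cofinal subset of $x$ along which they are pointwise increasing above $n$; crucially, the witnessing non-increasingness must be arranged to occur at ordinals of $x$ of cofinality $\omega_1$, so that membership in $X$ depends only on $x\cap\omega_1$, $\bsup(x)$, and the values $\bsup(x\cap\gamma)$ for $\gamma\in E^{\lambda^{+}}_{\omega_1}\cap x$. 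This localization through cofinality-$\omega_1$ points is exactly what makes $X$ upward closed under $\sqsubseteq^{*}$, i.e.\ weakly full.

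For stationarity I would reuse the game of Section~\ref{sec:wsquare}: given $F:[\lambda^{+}]^{<\omega}\to\lambda^{+}$, player~I's winning strategy in $G_1(\lambda^{+},F)$ provided by Lemma~\ref{lem:game1} produces $x=\clsr_F(\{\gamma_n:n<\omega\})$ with each $\gamma_n$ of cofinality $\omega_1$ and with gaps $x\cap[\alpha_n,\gamma_n)=\emptyset$; these gaps force the scale values to jump precisely at the cofinality-$\omega_1$ ordinals $\gamma_n$, so that $x$ is scale-bad and lies in $X$. For non-reflection I must show $X\cap[\delta]^{\omega}$ is non-stationary for every $\delta<\lambda^{+}$ that can occur as $\bsup(I)$ with $I\in[\lambda^{+}]^{\omega_1}$ and $\omega_1\subseteq I$, so that, taking $J=\delta$, Lemma~\ref{lem:ssr_sup} is violated. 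When $\delta<\lambda$ or $\cof(\delta)=\omega$, club-many $z\in[\delta]^{\omega}$ either fail $\sup(z\cap\lambda)=\lambda$ or fail to have their non-increasingness concentrated on cofinality-$\omega_1$ ordinals, hence avoid $X$. The essential case is $\delta\in E^{\lambda^{+}}_{\omega_1}$, where ``better'' is used decisively: from the witnessing pair $(C,\sigma)$ one extracts, by the pigeonhole principle applied to $\sigma:C\to\omega$ on the $\omega_1$-sized club $C$, a single $m<\omega$ and a cofinal $C_m\subseteq C$ along which $(f_{\beta}:\beta\in C_m)$ is $<_m$-increasing; then for the club of $z\in[\delta]^{\omega}$ with $z\cap C_m$ cofinal in $\sup z$, the sequence $(f_{\beta}:\beta\in z\cap C_m)$ is pointwise increasing above the fixed level $m$, so such $z$ are scale-good and thus outside $X$.

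The main obstacle is calibrating the definition of $X$ so that its three required features coexist. Weak fullness demands that scale-badness be expressed solely through the cofinality-$\omega_1$ data preserved by $\sqsubseteq^{*}$; stationarity demands that the game's gap-and-jump construction genuinely realize this badness; and non-reflection demands, on the one hand, that at cofinality-$\omega$ points (and below $\lambda$) the badness pattern be club-avoidable, and on the other hand that the $<_m$-coherence supplied by the better scale rule the pattern out on a club of $[\delta]^{\omega}$ for $\delta\in E^{\lambda^{+}}_{\omega_1}$. Making these pressures consistent — in particular pinning down ``the non-increasingness occurs at cofinality-$\omega_1$ ordinals'' so that the better-scale coherence really contradicts it while the game still yields stationarily many members of $X$ — is the heart of the matter; once $X$ is in hand, Lemma~\ref{lem:ssr_sup} mechanically converts its non-reflection into the failure of $\ssref(\lambda^{+})$.
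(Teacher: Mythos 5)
Your derivation of the corollary from Theorem \ref{thm:better_scale} is precisely the paper's own: $\ssref$ yields Strong Chang's Conjecture and hence $2^{\omega}\le\omega_2$, the hypothesis $\mu^{\omega}<\lambda$ needed for Fact \ref{fact:better_scale} is secured by induction on singular $\lambda$ of cofinality $\omega$, the theorem rules out better scales and so forces $\lambda^{\omega}=\lambda^{+}$, and Fact \ref{fact:Silver} lifts this to full $\mrm{SCH}$. Making the induction explicit is a harmless expansion of the paper's ``it is easy to see''; as a proof of the corollary modulo the theorem, this is correct and the same route.

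Your sketch of Theorem \ref{thm:better_scale} itself, however, has a genuine gap, and it sits exactly where you flag it (``calibrating the definition of $X$''). The missing idea is a coding device. The paper fixes, for each $n$, \emph{disjoint} stationary sets $E_{n,0},E_{n,1}\subseteq E^{\lambda_n}_{\omega_1}$ and an almost-disjoint sequence $\vec{b}=(b_\xi:\xi<\omega_1)$ of functions $\omega\to 2$, and puts $x\in X$ iff $f_{\bsup(x)}<\chi_x$ and the jump function $e_x(n)=\min\bigl(x\setminus f_{\bsup(x)}(n)\bigr)$ satisfies $e_x(n)\in E_{n,b_\xi(n)}$ for almost all $n$, where $\xi=x\cap\omega_1$; membership in $X$ thus \emph{codes} $x\cap\omega_1$ into the pattern of the jumps. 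Without this coding, your non-reflection claim at cofinality-$\omega$ points is simply false: if $\cof(\sup u)=\omega$, then on a club of $x\in[u]^{\omega}$ one has $e_x=e_u$, and nothing prevents $e_u$ from taking values of cofinality $\omega_1$ at almost every $n$, so the plain ``jumps occur at cofinality-$\omega_1$ ordinals'' version of $X$ would contain a club of $[u]^{\omega}$ rather than avoid one. With the coding, two sets $x_0,x_1\in X\cap[u]^{\omega}$ with $x_0\cap\omega_1\ne x_1\cap\omega_1$ force $b_{\xi_0}=^{*}b_{\xi_1}$ because $e_{x_0}=e_u=e_{x_1}$, a contradiction; the better-scale property is used only in the $\cof(\sup u)=\omega_1$ case, to stabilize $e_x$ to a single function on a club (your pigeonhole paragraph is in the right spirit there, and is close to the paper's use of the pair $(C,\sigma)$). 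Correspondingly, reusing $G_1$ from Section \ref{sec:wsquare} does not suffice: the paper needs the variant $G_3$, in which player I must additionally steer each $\delta_n$ into the prescribed set $E_{n,b_\xi(n)}$, play ordinals $\epsilon_n<\lambda^{+}$ so that $\bsup(x)$ comes out equal to a point $\rho$ with $\chi_N\le^{*}f_{\rho}$, and guarantee $x\cap\omega_1=\xi$; this is where Lemma \ref{lem:better_scale_stat}, entirely absent from your sketch, enters to supply such $\rho$, and Lemma \ref{lem:game3} (not Lemma \ref{lem:game1}) supplies the strategy. Since Theorem \ref{thm:better_scale} is a standing result of the paper, your proof of the corollary survives; but as a reconstruction of the theorem, the sketch would not close.
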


To prove the theorem we make preliminaries.
First we present a game which is a variant of the game used in Section \ref{sec:wsquare}:

Let $\vec{\lambda} = ( \lambda_n : n \in \omega )$ be a strictly increasing
sequence of regular cardinals $\geq \omega_2$,
let $\lambda = \sup_{n \in \omega} \lambda_n$,
and let $\vec{E} = ( E_{n,i} : n \in \omega , \; i \in 2 )$ be a sequence such that
each $E_{n,i}$ is a stationary subset of $E^{\lambda_n}_{\omega_1}$.
Moreover let $\vec{b} = ( b_\xi : \xi < \omega_1 )$
be a sequence of functions from $\omega$ to $2$.
For a function $F : [ \lambda^+ ]^{< \omega} \to \lambda^+$ and $\xi < \omega_1$
let $G_3 ( \vec{E} , \vec{b} , F , \xi )$ be the following game of length $\omega$:
\[
\begin{array}{c |c|c|c|c|c}
\mrm{I} & \alpha_0 \ \phantom{\beta_0 , \gamma_0} \ \delta_0 , \epsilon_0 &
\alpha_1 \ \phantom{\beta_1 , \gamma_1} \ \delta_1 , \epsilon_1 & \cdots &
\alpha_n \ \phantom{\beta_n , \gamma_n} \ \delta_n , \epsilon_n & \cdots \\
\hline
\mrm{II} & \phantom{\alpha_0} \ \beta_0 , \gamma_0 \ \phantom{\delta_0 , \epsilon_0} &
\phantom{\alpha_1} \ \beta_1 , \gamma_1 \ \phantom{\delta_1 , \epsilon_1} & \cdots &
\phantom{\alpha_n} \ \beta_n , \gamma_n \ \phantom{\delta_n , \epsilon_n} & \cdots \\
\end{array}
\]
In the $n$-th stage, first I chooses $\alpha_n < \lambda_n$,
then II chooses $\beta_n < \lambda_n$ and $\gamma_n < \lambda^+$.
Then I again chooses $\delta_n > \alpha_n , \beta_n$ with $\delta_n \in E_{n , b_\xi (n)}$
and $\epsilon_n < \lambda^+$ with $\epsilon_n > \gamma_n$.
I wins if, letting $x=\clsr_F ( \xi \cup \{ \delta_n , \epsilon_n : n \in \omega \} )$,
we have:
\begin{renumerate}
\item $x\cap \omega_1 = \xi$,
\item $x \cap [ \alpha_m , \delta_m ) = \emptyset$, for every $m$.
\end{renumerate}
Otherwise, II wins.

\begin{lemma} \label{lem:game3}
Let $\vec{\lambda}$, $\lambda$, $\vec{E}$, $\vec{b}$ and $F$ be as above.
Then there exists $\xi < \omega_1$ such that I has a winning strategy
for $G_3 ( \vec{E} , \vec{b} , F , \xi )$.
\end{lemma}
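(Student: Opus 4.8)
The plan is to combine the arguments of Lemmas~\ref{lem:game1} and~\ref{lem:game2}. First I note that for each fixed $\xi$ the game $G_3 ( \vec{E} , \vec{b} , F , \xi )$ is \emph{closed for} I: if II wins, this is witnessed by a single ordinal $z \in \clsr_F ( \xi \cup \{ \delta_n , \epsilon_n : n \in \omega \} )$ lying in $[ \xi , \omega_1 )$ or in some $[ \alpha_m , \delta_m )$, and any such $z$ is produced from finitely many of the generators by finitely many applications of $F$, hence already appears at a finite stage of the play. So by the Gale--Stewart theorem the game is determined. Assume towards a contradiction that I has no winning strategy in $G_3 ( \vec{E} , \vec{b} , F , \xi )$ for any $\xi < \omega_1$; then II has a winning strategy $\sigma_\xi$ for every $\xi$. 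Set $\vec{\sigma} = ( \sigma_\xi : \xi < \omega_1 )$, take a sufficiently large regular $\mu$ and a countable $N \prec H_\mu$ with $\vec{\lambda} , \lambda , \vec{E} , \vec{b} , F , \vec{\sigma} \in N$, and let $\zeta = N \cap \omega_1$. I will manufacture a single play of $G_3 ( \vec{E} , \vec{b} , F , \zeta )$ in which II follows $\sigma_\zeta$ but which is won by I, contradicting the choice of $\sigma_\zeta$.

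The heart of the construction is to build I's ``later'' moves $\delta_n , \epsilon_n \in N$ \emph{first}, uniformly dominating the responses of \emph{all} the $\sigma_\xi$, exactly as the union $\bigcup_{\xi} \tau_\xi$ is used in Lemma~\ref{lem:game2}. Recursively, having chosen $\delta_m , \epsilon_m$ for $m < n$, I would consider, for each $a < \lambda_n$, the supremum $F_n ( a )$ of the $\beta$-components of $\sigma_\xi$'s round-$n$ response, taken over all $\xi < \omega_1$ and all legal sequences of earlier I-moves $a_0 < \delta_0 , \dots , a_{n-1} < \delta_{n-1}$ together with the round-$n$ move $a$, with I's later moves fixed to be the already chosen $\delta_m , \epsilon_m$. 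Since there are fewer than $\lambda_n$ many such data and each response is $< \lambda_n$, regularity of $\lambda_n$ gives $F_n ( a ) < \lambda_n$, so $F_n \colon \lambda_n \to \lambda_n$, and $F_n \in N$ by elementarity. The set of closure points of $F_n$ is club in $\lambda_n$, so it meets the stationary set $E_{n , b_\zeta ( n )}$; choosing such an intersection point inside $N$ and above $\delta_{n-1}$ yields $\delta_n \in N \cap E_{n , b_\zeta ( n )}$ with $\cof ( \delta_n ) = \omega_1$ and $F_n [ \delta_n ] \subseteq \delta_n$. I then let $\epsilon_n \in N$ be above the analogous supremum of all the $\gamma$-components (which is $< \lambda^+$ and lies in $N$). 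The point of the closure property is that for \emph{any} later choice of I-moves $\alpha_m < \delta_m$ the $\beta$- and $\gamma$-responses of every $\sigma_\xi$ automatically fall below $\delta_n$ and $\epsilon_n$ respectively, so legality is guaranteed in advance.

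With all the $\delta_n , \epsilon_n \in N$ in hand, I set $x = \clsr_F ( \zeta \cup \{ \delta_n , \epsilon_n : n \in \omega \} )$ and define I's ``first'' moves post hoc by $\alpha_n = \bsup ( x \cap \delta_n )$. Because $x$ is countable and $\cof ( \delta_n ) = \omega_1$, a countable set cannot be cofinal in $\delta_n$, so $\alpha_n < \delta_n$ --- this is the cofinality trick of Lemma~\ref{lem:game1}, and it immediately gives condition~(ii), namely $x \cap [ \alpha_m , \delta_m ) = \emptyset$ for all $m$. Running $\sigma_\zeta$ against the sequence $( \alpha_n )$ now produces a legal play: $\delta_n > \alpha_n$ by construction, while $\delta_n > \beta_n$ and $\epsilon_n > \gamma_n$ hold because $\alpha_m < \delta_m$ for all $m \leq n$ and $\delta_n , \epsilon_n$ were chosen to dominate exactly such responses. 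Since the generators $\zeta \cup \{ \delta_n , \epsilon_n : n \in \omega \}$ all lie in $N$ and $N$ is closed under $F$, we get $x \subseteq N$, whence $x \cap \omega_1 \subseteq N \cap \omega_1 = \zeta \subseteq x$, giving condition~(i). Thus I wins this play, contradicting the assumption that $\sigma_\zeta$ is winning for II. Hence for some $\xi < \omega_1$ player I has a winning strategy, as required.

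The main obstacle is the circular dependence within a single round: I's opening move $\alpha_n$ must equal $\bsup ( x \cap \delta_n )$ and so depends on the \emph{entire} play through $x$; II's reply $\beta_n = \sigma_\zeta ( \dots , \alpha_n )$ depends on $\alpha_n$; and I's dominating move $\delta_n$ must exceed $\beta_n$. Decoupling this --- by fixing $\delta_n , \epsilon_n$ beforehand as closure points dominating the responses to \emph{all} possible early moves and \emph{all} strategies $\sigma_\xi$, and only then reading off $\alpha_n = \bsup ( x \cap \delta_n )$ --- is the crux, and it is precisely here that the requirement $\delta_n \in E^{\lambda_n}_{\omega_1}$ of uncountable cofinality is used.
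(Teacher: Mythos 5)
Your proof is correct, and while it shares the paper's high-level architecture (contradiction via Gale--Stewart determinacy, a single dominating play, and the post hoc definition $\alpha_n = \bsup ( x \cap \delta_n )$ made legal by $\cof ( \delta_n ) = \omega_1$), the machinery producing the dominating moves $\delta_n , \epsilon_n$ and the ordinal $\xi$ is genuinely different. The paper never defines dominating functions: it builds an $\in$-increasing \emph{double} tower $( K_{n,i} : n \in \omega , \; i \in 2 )$ of elementary submodels containing $\vec{\tau}$, with $\delta_{n,i} = K_{n,i} \cap \lambda_n \in E_{n,i}$ and $\epsilon_{n,i} = \bsup ( K_{n,i} \cap \lambda^+ )$, so that legality of II's replies ($\beta_n < \delta_n$, $\gamma_n < \epsilon_n$) falls out of elementarity of $K_n$; since the towers cannot anticipate which bit $b_\xi (n)$ will be demanded, both bits are prepared at every level and $\zeta$ is chosen \emph{afterwards}, as a countable-closure fixed point over all the $\delta_{n,i} , \epsilon_{n,i}$, after which the branch is selected by $b_\zeta$. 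You instead fix one countable $N \prec H_\mu$ and take $\zeta = N \cap \omega_1$ \emph{first}, which lets the externally known bit $b_\zeta (n)$ steer the recursion: although $\zeta \notin N$, the set $E_{n , b_\zeta (n)}$ is one of $E_{n,0} , E_{n,1} \in N$, so it meets the club of closure points of your function $F_n \in N$ (your sup over all $\xi < \omega_1$ is exactly the $\bigcup_\xi$ device of Lemma \ref{lem:game2}, needed because $\sigma_\zeta \notin N$), and elementarity of $N$ hands you $\delta_n \in N$; condition (i) of the winning criterion is then automatic from $x \subseteq N$, with no separate fixed-point argument. Your route buys a cleaner resolution of the $b_\zeta$-circularity -- no two-branch bookkeeping -- at the cost of the explicit cardinality computations (fewer than $\lambda_n$ histories, regularity of $\lambda_n$ and of $\lambda^+$) that the paper gets for free from elementarity of the $K_n$. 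One small point worth making explicit, which your write-up handles correctly in passing: in the supremum defining $F_n$, strategies $\sigma_\xi$ for which the already-fixed moves $\delta_m \in E_{m , b_\zeta (m)}$ are illegal in $G_3 ( \vec{E} , \vec{b} , F , \xi )$ (possible when $E_{m,0}$ and $E_{m,1}$ are disjoint, as in the application) simply contribute nothing to the sup, while $\xi = \zeta$ always yields legal histories, which is all the final comparison $\beta_n \leq F_n ( \alpha_n ) < \delta_n$ requires.
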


\begin{proof}
Towards a  contradiction assume that I does not have a winning strategy
in $G_3 ( \vec{E} , \vec{b} , F , \xi )$, for every $\xi < \omega_1$.
Since each $G_3 ( \vec{E} , \vec{b} , F , \xi )$ is an open-closed game,
by the Gale-Stewart theorem, II has a winning strategy, say $\tau_\xi$,
for all $\xi$. We will find $\zeta < \omega_1$ and a play
$( \alpha_n , \beta_n , \gamma_n , \delta_n , \epsilon_n : n \in \omega )$
of $G_3 ( \vec{E} , \vec{b} , F , \zeta )$ in which II follows his strategy
$\tau_\zeta$, yet I wins the game. 

Let $\vec{\tau} = ( \tau_\xi : \xi < \omega_1 )$.
Take a sufficiently large regular cardinal $\theta$.
Then we can find a system
$( K_{n,i} : n \in \omega , \; i \in 2 )$ of elementary submodels
of $( H_\theta , \in)$ containing  $\vec{\lambda}$ and $\vec{\tau} )$,
such that $\delta_{n,i} = K_{n,i} \cap \lambda_n \in E_{n, i}$, for each $n$ and $i$,
and such that $K_{n,i} \in K_{n' , i'}$ if $n < n'$ and $i, i' \in 2$.
Let $\epsilon_{n,i} = \bsup ( K_{n,i} \cap \lambda^+ )$, for each $n$ and $i$.
Then we can take $\zeta < \omega_1$ such that
\[
\clsr_F ( \zeta \cup \{ \delta_{n,i} , \epsilon_{n,i} : n \in \omega , \; i \in 2 \} ) \cap \omega_1
= \zeta.
\]
For each $n$, let $K_n$, $\delta_n$ and $\epsilon_n$
be $K_{n , b_\zeta (n)}$, $\delta_{n , b_\zeta (n)}$ and $\epsilon_{n , b_\zeta (n)}$,
respectively. Moreover, let
\[
x = \clsr_F ( \zeta \cup \{ \delta_n , \epsilon_n : n \in \omega \} ) \; ,
\]
and let $\alpha_n = \bsup ( x \cap \delta_n )$ for each $n$.
Note that $\alpha_n < \delta_n$, since $x$ is countable and $\cof ( \delta_n ) = \omega_1$.
Finally let $( \beta_n , \gamma_n : n \in \omega )$ be a sequence of
II's moves according to $\tau_\zeta$ against
$( \alpha_n , \delta_n , \epsilon_n : n \in \omega )$.
Note that
$\zeta, \alpha_0 , \delta_0 , \epsilon_0 , \dots , \alpha_{n-1} , \delta_{n-1} , \epsilon_{n-1} ,
\alpha_n \in K_n$, and $K_n$ is an elementary submodel of $H_\theta$, for all $n$. 
Hence $\beta_n \in  K_n \cap \lambda_n = \delta_n$.
Moreover $\gamma_n \in K_n \cap \lambda^+$,
and so $\gamma_n < \epsilon_n$.
Thus $( \alpha_n , \beta_n , \gamma_n , \delta_n , \epsilon_n : n \in \omega )$
is a legal play of $G_3 ( \vec{E} , \vec{b}, F , \zeta )$
in which II moves according to his winning strategy $\tau_\zeta$.
On the other hand $x \cap [ \alpha_m , \delta_m ) = \emptyset$,
for every $m$, by the choice of $\alpha_m$ and $\delta_m$.
Moreover $x \cap \omega_1 = \zeta$ by the choice of $\zeta$ and $x$.
Therefore I wins this play of the game. This is a contradiction.
\end{proof}

Next we give a standard lemma on better scales.

\begin{lemma} \label{lem:better_scale_stat}
Let $\vec{\lambda} = ( \lambda_n : n < \omega )$
be a strictly increasing sequence of regular cardinals,
and let $\lambda = \sup_{n \in \omega} \lambda_n$.
Suppose that $\vec{f} = ( f_\beta : \beta < \lambda^+ )$
is a better scale on $\vec{\lambda}$.
Then for any regular $\theta > \lambda^+$
there are stationary many $N \in [ H_\theta ]^\omega$
with $\chi_N \leq^* f_{\bsup ( N \cap \lambda^+ )}$,
where $\chi_N$ is the characteristic function of $N$ (see \S \ref{sec:preliminaries}).
\end{lemma}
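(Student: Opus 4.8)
The goal is to show that, given a better scale $\vec{f} = (f_\beta : \beta < \lambda^+)$ on $\vec{\lambda}$, there are stationarily many countable $N \prec H_\theta$ with $\chi_N \leq^* f_{\bsup(N \cap \lambda^+)}$. The plan is to work with internally approachable models of length $\omega_1$ and use the defining property of a better scale to compare the characteristic function of a countable submodel with a scale function indexed by the supremum of the model. Let me write $\delta_N = \bsup(N \cap \lambda^+)$ throughout.

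Let me sketch the approach in detail.

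\begin{proof}[Proof sketch]
Fix a regular $\theta > \lambda^+$ together with a well-ordering $\unlhd$ of $H_\theta$, and let $\frak A = (H_\theta, \in, \unlhd, \vec\lambda, \vec f)$. I would first pass to an intermediate model of size $\aleph_1$. Given an arbitrary function $F : [H_\theta]^{<\omega} \to H_\theta$, I want to produce $N \in [H_\theta]^\omega$ closed under $F$ with $\chi_N \leq^* f_{\delta_N}$. The idea is to build an elementary submodel $K \prec \frak A$ of size $\aleph_1$ which is internally approachable of length $\omega_1$, contains all the relevant parameters (including $F$ via a suitable coding), and satisfies $\omega_1 \subseteq K$; write $\eta = \sup(K \cap \lambda^+)$, which has cofinality $\omega_1$. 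The point of uncountable cofinality is that the better-scale property applies at $\eta$: there is a club $C \subseteq \eta$ and a function $\sigma : C \to \omega$ witnessing $f_\beta <_{\max\{\sigma(\beta),\sigma(\gamma)\}} f_\gamma$ for $\beta < \gamma$ in $C$.

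\textbf{Approximating from within $K$.} Since $K$ is internally approachable of length $\omega_1$, fix an increasing continuous chain $(K_\xi : \xi < \omega_1)$ with union $K$ and each initial segment in $K$; the countable models $K_\xi$ will be the candidates for $N$. For each such $\xi$ let $\delta_\xi = \bsup(K_\xi \cap \lambda^+)$; then $(\delta_\xi : \xi < \omega_1)$ is increasing and continuous with supremum $\eta$, so its range meets the club $C$ from the better-scale hypothesis on a club of $\xi$. The heart of the argument is then the following claim: for club-many $\xi$, one has $\chi_{K_\xi} \leq^* f_{\delta_\xi}$. To see this, I would use that $\delta_\xi \in C$ (for $\xi$ in a club), so by the defining property of a better scale $f_{\delta_\xi}$ eventually dominates $f_\beta$ for every $\beta \in C \cap \delta_\xi$. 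On the other hand, for each $n$ the value $\chi_{K_\xi}(n) = \bsup(K_\xi \cap \lambda_n)$ is an ordinal below $\lambda_n$ which, by elementarity and internal approachability, is realized by some scale function indexed below $\delta_\xi$; feeding this comparison through the uniformity provided by $\sigma$ on $C$ gives $\chi_{K_\xi}(n) < f_{\delta_\xi}(n)$ for all large $n$.

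\textbf{Pulling back to a stationary set.} Once the claim is established, since club-many $K_\xi$ are closed under $F$ (each $K_\xi \in K \prec \frak A$, and $F \in K$), I obtain a countable $N = K_\xi$ closed under $F$ with $\chi_N \leq^* f_{\delta_N}$. As $F$ was arbitrary, the set of such $N$ is stationary in $[H_\theta]^\omega$ by the usual characterization of stationarity recalled in Section~\ref{sec:preliminaries}.
\end{proof}

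\textbf{Main obstacle.} The delicate point is the claim that $\chi_{K_\xi}(n) < f_{\delta_\xi}(n)$ eventually, i.e.\ genuinely exploiting the \emph{better} (as opposed to merely good or exact) scale structure. The uniform bound $\sigma$ on the club $C$ is exactly what lets one choose a single threshold $m$ working for the comparison of $f_{\delta_\xi}$ against all earlier $f_\beta$ with $\beta \in C \cap \delta_\xi$; the difficulty is to convert the pointwise data $\chi_{K_\xi}(n) = \bsup(K_\xi \cap \lambda_n)$ into a comparison with some $f_\beta$, $\beta < \delta_\xi$, and then to transfer that comparison to $f_{\delta_\xi}$ using $\sigma$ without losing uniformity in $n$. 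Everything else -- the internal approachability of $K$, the continuity of the $\delta_\xi$, and the reduction of stationarity to closure under an arbitrary $F$ -- is routine.
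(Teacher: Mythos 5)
Your overall architecture coincides with the paper's: a continuous increasing chain of countable elementary submodels, comparison of $\chi_{K_\xi}$ with $f_{\delta_\xi}$, and invocation of the better-scale club $C$ with $\sigma$ at the top point $\eta$ of cofinality $\omega_1$. But the step you yourself flag as the ``main obstacle'' is a genuine gap, and your central claim --- that $\chi_{K_\xi} \leq^* f_{\delta_\xi}$ for \emph{club}-many $\xi$ --- is both unjustified by your sketch and stronger than what this method can deliver. The trouble is at limit $\xi$, which is necessarily of countable cofinality since $\xi < \omega_1$: there $\chi_{K_\xi}(n) = \sup_{\xi' < \xi} \chi_{K_{\xi'}}(n)$, and although each $\chi_{K_{\xi'}}$ is $<^*$-below $f_{\delta_\xi}$ (by elementarity, $K_{\xi'} \in K_{\xi''}$ gives some $\beta \in K_{\xi''} \cap \lambda^+$ with $\chi_{K_{\xi'}} <^* f_\beta <^* f_{\delta_{\xi''}}$ --- note this comparison concerns the whole function, not each coordinate separately as your sketch suggests), the mod-finite thresholds of these comparisons, as well as the values $\sigma(\delta_{\xi'})$, can be unbounded along any countable cofinal sequence in $\xi$: an infinite sequence of natural numbers need not have a constant infinite subsequence. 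Then the pointwise supremum cannot be bounded by $f_{\delta_\xi}(n)$ at any fixed coordinate $n$, and ``feeding the comparison through the uniformity of $\sigma$'' does not repair this, since $\sigma$ only provides uniformity between \emph{pairs} of points of $C$, not along a countable limit. Indeed, if the claim could be established this softly, the better-scale hypothesis would be essentially unused and the lemma would hold for arbitrary scales (which always exist), contradicting the role the lemma plays in the proof of Theorem \ref{thm:better_scale}.

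The missing idea --- and it is exactly how the paper closes this gap --- is a uniformization over \emph{uncountably many} indices followed by passage to a limit point, rather than a per-$\xi$ argument. Writing $\rho_\xi = \sup(N_\xi \cap \lambda^+)$ for a continuous chain $(N_\xi : \xi < \omega_1)$, the set of $\xi$ with $\rho_\xi \in C$ is club in $\omega_1$, and since $\xi \mapsto \sigma(\rho_\xi)$ and the thresholds of the comparisons $\chi_{N_\xi} <^* f_{\rho_{\xi'}}$ take values in $\omega$, Fodor's lemma yields a single $m < \omega$ and a stationary $S \subseteq \omega_1$ such that $(f_{\rho_\xi} : \xi \in S)$ is $<_m$-increasing and $\chi_{N_\xi} <_m f_{\rho_{\xi'}}$ for all $\xi < \xi'$ in $S$. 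Taking $\eta$ a limit point of $S$, continuity of the chain gives $\chi_{N_\eta}(n) = \sup_{\xi \in S \cap \eta} \chi_{N_\xi}(n) \leq f_{\rho_\eta}(n)$ for all $n \geq m$, and $N_\eta$ is the desired model. Note that this produces one suitable model per closure function $F$ --- hence stationarily many $N$ in $[H_\theta]^\omega$ in total --- and makes no claim of club-many good indices along a single chain. Your reduction of stationarity to an arbitrary $F$ and your use of a long continuous chain are fine; what your proposal lacks is precisely this Fodor-plus-limit-point uniformization, without which the limit stage of your claim fails.
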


\begin{proof}
Suppose that $\theta$ is a regular cardinal bigger than $\lambda^+$.
It is sufficient to show that for every expansion $\frak A$ of $(H_\theta,\in)$
there is a countable elementary submodel $N$ of $\frak A$, such that 
$\chi_N \leq^* f_\rho$, where $\rho = \sup ( N \cap \lambda^+ )$.
In order to find such an $N$, first take a continuous 
$\in$-chain $( N_\xi : \xi < \omega_1 )$ of countable elementary submodels of $\frak A$
containing all the relevant parameters. 
Let $\rho_\xi = \sup ( N_\xi \cap \lambda^+ )$.
Then, since $\vec{f}$ is a better scale, we can find $m < \omega$ and
a stationary $S \subseteq \omega_1$ such that
$( f_{\rho_\xi} : \xi \in S )$ is $<_m$-increasing.
Here note that if $\xi < \eta$, then $\chi_{N_\xi} <^* f_{\rho_{\eta}}$,
since $N_\xi \in N_{\eta}$ and $N_\eta$ is an elementary submodel of $\frak A$.
So, by shrinking $S$ and increasing $m$ if necessary,
we may assume that $\chi_{N_\xi} <_m f_{\rho_{\eta}}$ for any $\xi , \eta \in S$
with $\xi < \eta$. Take $\eta \in \ltpt (S)$.
Then $\chi_{N_\xi} <_m f_{\rho_\eta}$, for all $\xi \in S \cap \eta$.
Moreover $\chi_{N_\eta} (n) = \sup_{\xi \in S \cap \eta} \chi_{N_\xi} (n)$, for all $n$,
since $N_\eta = \bigcup_{\xi \in S\cap \eta} N_\xi$.
So $\chi_{N_\eta} \leq_m f_{\rho_\eta}$.
Therefore $N= N_\eta$ is as desired.
\end{proof}

Now we prove Theorem \ref{thm:better_scale}.
In the proof we will use Lemma \ref{lem:ssr_sup} as well as
Lemmas \ref{lem:game3} and \ref{lem:better_scale_stat}.

\begin{proof}[Proof of Theorem \ref{thm:better_scale}]
Towards a contradiction assume that $\vec{\lambda} = ( \lambda_n : n < \omega )$
is a strictly increasing sequence of regular cardinals converging to $\lambda$
and there is a better scale $\vec{f} = ( f_\beta : \beta < \lambda^+ )$ 
on $\vec{\lambda}$. We may also assume that $\lambda_0 \geq \omega_2$.
Fix a sequence $\vec{E} = ( E_{n,i} : n \in \omega , \; i \in 2 )$
such that $E_{n,0}$ and $E_{n,1}$ are disjoint stationary subsets of 
$E^{\lambda_n}_{\omega_1}$, for all $n$, 
and fix a sequence $\vec{b} = ( b_\xi : \xi < \omega_1 )$ of
functions from $\omega$ to $2$ such that
if $\xi \neq \eta$, then $b_\xi \neq^* b_{\eta}$.
Moreover for each $x \subseteq \lambda^+$
let $e_x$ be the function on $\omega$
defined by $e_x (n) = \min ( x \setminus f_{\bsup (x)} (n) )$.
(If $x \setminus f_{\bsup (x)} (n) = \emptyset$, then let $e_x (n) = 0$.)
Then let $X$ be the set of all $x \in [ \lambda^+ ]^\omega$ such that,
letting $\xi = x \cap \omega_1 \in \omega_1$, we have
\begin{renumerate}
\item $f_{\bsup (x)} < \chi_x$,
\item $e_x (n) \in E_{n , b_\xi (n)}$, for all but finitely many $n$.
\end{renumerate}
Note that $X$ is weakly full. So, it suffices to prove the following two claims. 

\begin{claim}
$X$ is stationary in $[ \lambda^+ ]^\omega$.
\end{claim}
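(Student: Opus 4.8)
The plan is to show that $X$ is stationary by verifying the standard criterion: given an arbitrary function $F : [\lambda^+]^{<\omega} \to \lambda^+$, I must find $x \in X$ that is closed under $F$ and satisfies $x \cap \omega_1 \in \omega_1$. The natural tool is Lemma \ref{lem:game3}, which hands me an ordinal $\zeta < \omega_1$ such that I has a winning strategy $\tau$ in $G_3(\vec{E}, \vec{b}, F, \zeta)$. The idea is that condition (i) in the definition of $X$ (namely $f_{\bsup(x)} < \chi_x$) should be arranged by playing the scale values $f_{\bsup(x)}(n)$ against player I's moves, while condition (ii) (namely $e_x(n) \in E_{n, b_\zeta(n)}$) will fall out of the $\delta_n \in E_{n, b_\zeta(n)}$ clause built into the rules of $G_3$ together with the winning condition $x \cap [\alpha_m, \delta_m) = \emptyset$.

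\medskip

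First I would fix the strategy $\tau$ from Lemma \ref{lem:game3} for the value $\zeta$ it produces. The point of using $G_3$ rather than the simpler $G_1$ of Section \ref{sec:wsquare} is precisely that the moves $\delta_n$ are forced to land in the prescribed stationary sets $E_{n, b_\zeta(n)} \subseteq E^{\lambda_n}_{\omega_1}$, and that a second coordinate $\epsilon_n < \lambda^+$ is played to control the supremum in $\lambda^+$. The plan is then to run the game so that at stage $n$ player II plays, as the ordinal $\beta_n < \lambda_n$, the value $f_{\rho}(n)$ of the scale at the eventual supremum $\rho = \bsup(x)$ — or rather an approximation to it that I can bootstrap. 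Because $\tau$ guarantees $x \cap [\alpha_m, \delta_m) = \emptyset$ for every $m$, the least element of $x$ above $f_{\bsup(x)}(n)$ will be exactly one of the $\delta_n$'s, which lies in $E_{n, b_\zeta(n)}$; this is what yields clause (ii). The self-referential difficulty — that $f_{\bsup(x)}$ depends on $x$, which is only determined at the end of the play — is the main obstacle, and I expect to resolve it in the same way Lemma \ref{lem:game3}'s proof resolves the analogous issue: by building an $\in$-chain of elementary submodels $(K_n)_n$ of some $H_\theta$ with $\delta_n = K_n \cap \lambda_n \in E_{n, b_\zeta(n)}$, so that $\bsup(x) = \sup_n \epsilon_n$ is determined by the chain and the scale value $f_{\bsup(x)}(n)$ lies inside $K_{n+1}$, making it available as a legal future move.

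\medskip

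Concretely, I would set $x = \clsr_F(\zeta \cup \{\delta_n, \epsilon_n : n \in \omega\})$ where the $\delta_n, \epsilon_n$ arise from a play in which II follows the reflection of the submodel chain and I follows $\tau$; the condition $x \cap \omega_1 = \zeta$ (so $x \cap \omega_1 \in \omega_1$) is exactly winning clause (i) of $G_3$, which $\tau$ secures. To get clause (i) of the definition of $X$, I would verify $f_{\bsup(x)}(n) < \bsup(x \cap \lambda_n) = \chi_x(n)$ for all $n$, which holds because the scale value at $n$ is below $\delta_n = K_n \cap \lambda_n$ (arranged by placing $f_{\bsup(x)} \rst (n{+}1)$ inside the models and using $\cof(\delta_n) = \omega_1$ together with countability of $x$). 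For clause (ii), since $x \cap [\alpha_m, \delta_m) = \emptyset$ and $f_{\bsup(x)}(n) \le \alpha_n$ by construction, the minimum of $x$ above $f_{\bsup(x)}(n)$ is forced up to $\delta_n$, giving $e_x(n) = \delta_n \in E_{n, b_\zeta(n)}$ for all large $n$. The delicate bookkeeping is the interleaving of the scale values with the model-chain construction so that everything remains a legal play while $f_{\bsup(x)}$ is simultaneously being pinned down; this coordination, rather than any single inequality, is where I expect the real work to lie.
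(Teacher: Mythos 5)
There is a genuine gap at the heart of your plan: in the application of Lemma \ref{lem:game3} it is player I who plays $\delta_n$ and $\epsilon_n$, and I's moves are dictated by the winning strategy $\tau$, which comes from an abstract determinacy argument (Gale--Stewart). You, playing II, have no control over the $\delta_n$ whatsoever. Your proposed resolution of the self-reference --- building an $\in$-chain of submodels $(K_n)_n$ with $\delta_n = K_n \cap \lambda_n$ --- is precisely the construction used \emph{inside the proof of Lemma \ref{lem:game3}} to defeat a putative winning strategy of II; it is not available once you are playing against $\tau$, and nothing in your sketch replaces it. What actually pins down $f_{\bsup (x)}$ in the paper is Lemma \ref{lem:better_scale_stat}, which you never invoke and which is the only point where the better-scale hypothesis enters this claim: it supplies a single countable $N \prec \mathfrak{A} = ( H_\theta , \in , \unlhd , F , \tau )$ with $\chi_N \leq^* f_\rho$, where $\rho = \sup ( N \cap \lambda^+ )$. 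Then $\rho$, and hence the sequence $\beta_n = f_\rho (n)$, is fixed \emph{before} the game starts, so no bootstrapping is needed; II additionally plays a sequence $( \gamma_n )_n$ cofinal in $N \cap \lambda^+$, so that the rule $\epsilon_n > \gamma_n$ forces $\sup (x) \geq \rho$, while Fact \ref{fact:skull_sup} gives $x \subseteq \skull^{\mathfrak{A}} ( N \cup \rho ) \cap \lambda^+ = \rho$. Both directions are needed to conclude $f_{\bsup (x)} = f_\rho$, and your sketch addresses neither: in particular, closing under $F$ could a priori push $\sup (x)$ above $\sup_n \epsilon_n$, so your assertion that $\bsup (x) = \sup_n \epsilon_n$ ``is determined by the chain'' is unjustified.

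There are also two concrete slips in your verification of clause (ii). You write ``$f_{\bsup (x)} (n) \leq \alpha_n$ by construction''; this is backwards and, as written, fatal: if $f_\rho (n) < \alpha_n$, the interval $[ f_\rho (n) , \alpha_n )$ may meet $x$, in which case $e_x (n) \neq \delta_n$. What is needed is $\alpha_n \leq f_\rho (n) < \delta_n$ for all but finitely many $n$. The second inequality is just a game rule ($\delta_n > \beta_n$), and it already yields clause (i) of the definition of $X$ immediately --- no submodel argument about $\cof ( \delta_n ) = \omega_1$ is required there, contrary to your sketch. The first inequality is the real work: one shows $\alpha_n \in \skull^{\mathfrak{A}} ( N \cup \chi_N (n) ) \cap \lambda_n = \chi_N (n)$ via Fact \ref{fact:skull_sup} (all of II's earlier moves $\beta_m , \gamma_m$ for $m \leq n$ lie in $N \cup \chi_N (n)$, and $\tau , F$ are available in the hull), and then $\chi_N (n) \leq f_\rho (n)$ for large $n$ by the choice of $N$. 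None of this can be recovered from your chain picture, since that picture presupposes that the wrong player controls $\delta_n$.
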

\begin{proof} Take an arbitrary function $F : [ \lambda^+ ]^{< \omega} \to \lambda^+$.
We need to find $x \in X$ which closed under $F$.
By Lemma \ref{lem:game3}, fix  $\xi < \omega_1$ such that
there is a winning strategy $\tau$ of I in $G_3 ( \vec{E} , \vec{b} , F , \xi )$.
Moreover take a sufficiently large regular cardinal $\theta$
and a well-ordering $\unlhd$ of $H_\theta$,
and let $\frak A= ( H_\theta , \in , \unlhd, F,\tau)$.  
Then by Lemma \ref{lem:better_scale_stat} we can find a countable elementary
submodel $N$ of $\frak A$, containing $F$ and $\tau$,  
such that $\chi_N \leq^* f_\rho$, where $\rho = \sup ( N \cap \lambda^+ )$.
Let $\beta_n = f_\rho (n)$, for each $n$,
and take an increasing cofinal sequence $( \gamma_n : n \in \omega )$
in $N \cap \lambda^+$.
Then let $( \alpha_n , \delta_n , \epsilon_n : n \in \omega )$ be a sequence
of I's moves according to $\tau$ against $( \beta_n , \gamma_n : n \in \omega )$,
and let $x = \clsr_F ( \xi \cup \{ \delta_n , \epsilon_n : n \in \omega \} )$.
It suffices to show that $x \in X$.
In order to see this, first note that $x \cap \omega_1 = \xi$, since 
$( \alpha_n , \beta_n , \gamma_n , \delta_n , \epsilon_n : n \in \omega )$
is a play of $G_3 ( \vec{E} , \vec{b} , F , \xi )$ in which I  moves according
to his winning strategy $\tau$.
Note also that $\sup (x) = \rho$. This is because
$\sup (x) \geq \sup_{n \in \omega} \epsilon_n \geq \sup_{n \in \omega} \gamma_n
= \rho$.
On the other hand note that
$x \subseteq \skull^{\frak A} ( \rho ) \cap \lambda^+$. Indeed, $\beta_n,\gamma_n <\rho$,
for all $n$, and $\tau,F\in \skull^{\frak A}(\rho)$. 
Moreover $\skull^{\frak A} ( \rho ) \cap \lambda^+ = \rho$ by Fact \ref{fact:skull_sup}.
Therefore $\sup (x) \leq \rho$.
It follows that $x\cap \omega_1=\xi$. 
Also $x$ satisfies (i),  since
$f_\rho (n) = \beta_n < \delta_n \in x \cap \lambda_n$, for every $n$.
In order to check (ii), first note that $\alpha_n < \chi_N (n)$, for each $n$, since
\[
\alpha_n \in \skull^{\frak A} ( N \cup \chi_N (n) ) \cap \lambda_n = \chi_N (n) \; .
\]
Here the former $\in$-relation is because
$\{ \beta_m , \gamma_m : m < n \} \subseteq N \cup \chi_N (n)$,
and the latter equality is by Fact \ref{fact:skull_sup}.
Then $\alpha_n < f_\rho (n)$, for all but finitely many $n$, since $\chi_N \leq^* f_\rho$.
Note also that $\delta_n \in x$ and that $x \cap [ \alpha_n , \delta_n ) = \emptyset$,
since I wins the play  $( \alpha_n , \beta_n , \gamma_n , \delta_n , \epsilon_n : n \in \omega )$
in $G_3 ( \vec{E} , \vec{b} , F , \xi )$.
Hence $e_x ( n ) = \delta_n$, for all but finitely many $n$.
Moreover, $\delta_n \in E_{n , b_\xi (n) }$ by the rules of $G_3 ( \vec{E} , \vec{b} , F , \xi )$.
Thus $x$ satisfies (ii).
\end{proof}

\bigskip

\begin{claim}
The conclusion of Lemma \ref{lem:ssr_sup} fails for $X$.
\end{claim}
\begin{proof} Towards a contradiction assume that the conclusion of Lemma \ref{lem:ssr_sup} holds for $X$.
Then we can find $u \in [ \lambda ]^{\omega_1}$ such that $\omega_1 \subseteq u$ and
$X \cap [u]^\omega$ is stationary. Clearly $\sup (u)$ is a limit ordinal. We consider
two cases according to whether the cofinality of $\sup (u)$ is $\omega$ or $\omega_1$. 

First suppose that $\cof ( \sup (u) ) = \omega$.
Then the set
\[
Y = \{ x \in [u]^\omega : \sup (x) = \sup (u) \mbox{ and } \mrm{range} ( e_u ) \subseteq x \}
\]
is club in $[u]^\omega$.
Note  that $e_x = e_u$ for all $x \in Y$.
Take $x_0 , x_1 \in X \cap Y$ with $x_0 \cap \omega_1 \neq x_1 \cap \omega_1$,
and let $\xi_i = x_i \cap \omega_1$ for $i = 0,1$.
Then $e_{x_0} = e_u = e_{x_1}$, since $x_0 , x_1 \in Y$,
and so $b_{\xi_0} =^* b_{\xi_1}$, since $x_0 , x_1 \in X$.
This contradicts the choice of $\vec{b}$ and the fact that $\xi_0 \neq \xi_1$.

Next suppose that $\cof ( \sup (u) ) = \omega_1$.
Since $\vec{f}$ is a better scale we can find a club $C$ in $\sup (u)$ and $\sigma : C \to \omega$
such that $f_\beta <_{\max \{ \sigma ( \beta ) , \sigma ( \gamma ) \}} f_\gamma$
for any $\beta , \gamma \in C$ with $\beta < \gamma$.
Let $h$ and $e$ be functions on $\omega$ defined by:
\begin{eqnarray*}
h(n) & = & \sup \{ f_\beta (n) : \beta \in C \,\wedge\, n \geq \sigma ( \beta ) \} \; , \\
e(n) & = & \min ( u \setminus h(n) ) \; .
\end{eqnarray*}
Moreover let $Z$ be the set of all $x \in [u]^\omega$ such that
\begin{renumerate}
\addtocounter{enumi}{3}
\item $\sup (x) \in C$,
\item $x \cap h(n) \subseteq f_{\sup (x)} (n)$, for every $n \geq \sigma ( \sup (x) )$,
\item $\mrm{range} (e) \subseteq x$.
\end{renumerate}
Then it is easy to see that $Z$ contains a club subset of $[u]^\omega$.
Here note that if $x \in Z$, then $e_x (n) = e(n)$ for all $n \geq \sigma ( \bsup (x) )$.
Then we can get a contradiction by the same argument as in the case
when $\cof (\sup (u) ) = \omega$.
\end{proof}
\setcounter{claim}{0}
This completes the proof of Theorem \ref{thm:better_scale}.
\end{proof}

\bibliographystyle{plain}
\bibliography{ssr_final}

\end{document}